\begin{document}

\newtheorem{tm}{Theorem}[section]
\newtheorem{prop}[tm]{Proposition}
\newtheorem{defin}{Definition}[section]
\newtheorem{coro}{Corollary}[section]
\newtheorem{lem}{Lemma}[section]
\newtheorem{assumption}{Assumption}[section]
\newtheorem{rk}{Remark}[section]
\newtheorem{nota}{Notation}[section]
\numberwithin{equation}{section}

\newcommand{\mg}{\color{RedViolet}}

    \newcommand{\lb}{\label}
  \newcommand{\beq}{\begin{equation}}
    \newcommand{\eeq}{\end{equation}}

\newcommand{\stk}[2]{\stackrel{#1}{#2}}
\newcommand{\dwn}[1]{{\scriptstyle #1}\downarrow}
\newcommand{\upa}[1]{{\scriptstyle #1}\uparrow}
\newcommand{\nea}[1]{{\scriptstyle #1}\nearrow}
\newcommand{\sea}[1]{\searrow {\scriptstyle #1}}
\newcommand{\csti}[3]{(#1+1) (#2)^{1/ (#1+1)} (#1)^{- #1
 / (#1+1)} (#3)^{ #1 / (#1 +1)}}
\newcommand{\RR}[1]{\mathbb{#1}}

\newcommand{\rd}{{\mathbb R^d}}
\newcommand{\ep}{\varepsilon}
\newcommand{\rr}{{\mathbb R}}
\newcommand{\alert}[1]{\fbox{#1}}
\newcommand{\eqd}{\sim}

\def\un{\underline}
\def\ba{\overline}

\def\p{\partial}
\def\R{{\mathbb R}}
\def\N{{\mathbb N}}
\def\Q{{\mathbb Q}}
\def\C{{\mathbb C}}
\def\l{\left(}
\def\r{ \right) }
\def\t{\tau}
\def\k{\kappa}
\def\a{\alpha}
\def\la{\lambda}
\def\De{\Delta}
\def\de{\delta}
\def\ga{\gamma}
\def\Ga{\Gamma}
\def\ep{\varepsilon}
\def\eps{\varepsilon}
\def\si{\sigma}
\def\Re {{\rm Re}\,}
\def\Im {{\rm Im}\,}
\def\E{{\mathbb E}}
\def\P{{\mathbb P}}
\def\Z{{\mathbb Z}}
\def\D{{\mathbb D}}
\def\calS{\mathcal{S}}
\def\O{{\Omega_T}}
\def\calC{\mathcal{C}}
\def\calL{\mathcal{L}}
\def\calX{\mathcal{X}}

\newcommand{\ceil}[1]{\lceil{#1}\rceil}

\newcommand{\bbR}{{\mathbb R}}

\newcommand{\calB}{{\mathcal B}}

\title{Chemotaxis Models with Nonlinear/Porous Medium Diffusion,
Consumption, and Logistic source on 
$\mathbb{R}^N$:  I.  Global Solvability and Boundedness}

\author{
Zulaihat Hassan,  Wenxian Shen, and Yuming Paul Zhang  \\
Department of Mathematics and Statistics\\
Auburn University, AL 36849\\
U.S.A. }

\date{}
\maketitle

\begin{abstract}
This series of  papers is concerned with  the global solvability,  boundedness, regularity, and uniqueness of weak solutions to the following parabolic-parabolic chemotaxis system with a logistic source and chemical consumption:
\begin{equation*}
\begin{cases}
u_t = m\nabla\cdot \left((\eps+u)^{m-1}\nabla u\right) - \chi \nabla \cdot (u \nabla v) + u(a - b u), & \text{ in } (0,\infty)\times\mathbb{R}^N, \\
v_t = \Delta v - uv, & \text{ in } (0,\infty)\times\mathbb{R}^N,
\end{cases}
\end{equation*}
where $m > 1$ and $\eps \geq 0$.
The present paper focuses on the global solvability and boundedness of weak solutions.  For general bounded initial data, which may be non-integrable, we prove the existence of global weak solutions that remain uniformly bounded for all times. The proof relies on deriving local $L^p$ estimates that are uniform in time via a new continuity-type argument and obtaining $L^\infty$ bounds using Moser's iteration; all of these estimates are uniform as $\eps\to0$.  
{In part II,} we will study the regularity and uniqueness of weak solutions.

\end{abstract}

\medskip

\noindent {\bf Keywords:} {Chemotaxis system; 
 porous medium diffusion; logistic source; non-integrable initial data; weak solutions; global existence; boundedness}

\medskip

\noindent{\bf AMS Subject Classification (2020):}    35B45,
35D30, 35K65, 35Q92, 92C17

\tableofcontents
\section{Introduction}

\subsection{Overview}

The movement of biological species is often influenced by specific chemicals, which play a crucial role in various biological processes such as bacterial aggregation, immune responses, and angiogenesis during embryonic development and tumor progression. The general chemotaxis system that describes the movement of cells in response to a chemical signal is given below,
\begin{equation}
\label{general-eq}
\begin{cases}
\p_t u=\nabla \cdot\big(D(u,v)\nabla u-\chi(u,v)\nabla v\big)+f(u,v),\quad & x\in\Omega\cr
\tau \p_t v=d \Delta v+g(u,v)-h(u,v)v,\quad &x\in\Omega.
\end{cases}
\end{equation}
The variable \( u \) represents the cell density, while \( v \) denotes the concentration of the chemical signal in a given domain \( \Omega \subset \mathbb{R}^N \) with $N\geq 1$, which can be bounded or unbounded. The function \( D(u,v) \) describes the diffusivity of the cells, and \( \chi(u,v) \) represents the chemotaxis sensitivity. The function \( f(u,v) \) models cell growth and death, whereas the functions \( g(u, v) \) and \( h(u, v) \) describe the production and degradation of the chemical signal, respectively.  The parameters \( \tau \) and \( d \) are associated with the diffusion speed of the chemical substance.

In the current series of papers,  we consider a chemotaxis system in which the diffusivity of the cells is governed by a nonlinear function of the cell density. Specifically, we study system \eqref{general-eq} with a diffusion coefficient of the form \( D(u,v) = m(\eps+u)^{m-1} \) with $m>1$ and $\eps\geq 0$, linear sensitivity $\chi(u, v) = \chi u$,  logistic cell growth $f(u, v) = au-bu^2$, no chemical signal production $g(u,v)=0$,  and chemical consumption $h(u, v) = u$, which leads to the following system: 
\begin{equation}
\label{main-perturbed-eq}
\begin{cases}
u_t = m\nabla\cdot \big((\eps+u)^{m-1}\nabla u\big) - \chi \nabla \cdot (u \nabla v) + u(a - b u),\quad & x \in \mathbb{R}^N, \,\, t>0, \\
v_t = \Delta v - uv, \quad & x \in \mathbb{R}^N,\,\,  t>0,\\
u(0,x) = u_0(x),\,\, v(0, x) = v_0(x), \quad & x\in \R^N
\end{cases}
\end{equation}
with initial condition satisfying

\begin{equation*}
u_0(\cdot)\in L^\infty(\R^N),\quad v_0(\cdot)\in W^{1,\infty}(\R^N).
\end{equation*}
Let us highlight that our framework allows both $u_0$ and $v_0$ to be non-integrable. This generality is important, as it lays the foundation for further studies, including the large-time propagation behavior of chemotaxis models in the whole space (see \cite{griette2023speed,hassan2025spreading,salako2018existence,shen2021can}).

We refer to system \eqref{main-perturbed-eq} as a chemotaxis model with nonlinear diffusion, a logistic source, and consumption of the chemical signal. When $\eps>0$, the diffusion term is non-degenerate. In contrast, for $\eps=0$, the diffusion becomes degenerate of porous medium type, and \eqref{main-perturbed-eq} reduces to
\begin{equation}
\label{main-eq}
\begin{cases}
u_t = \Delta u^m- \chi \nabla \cdot (u \nabla v) + u(a - b u),\quad & x \in \mathbb{R}^N,\,\,  t>0, \\
v_t = \Delta v - uv, \quad & x \in \mathbb{R}^N, t>0,\,\, \\
u(0,x) = u_0(x),\,\, v(0, x) = v_0(x), \quad & x\in \R^N.
\end{cases}
\end{equation}
We refer to \eqref{main-perturbed-eq} as a perturbed problem of \eqref{main-eq}. The study of chemotaxis models incorporating porous medium-type diffusion is motivated by the observation that the migration of cells is more accurately described by nonlinear diffusion, where cell mobility depends nonlinearly on the cell density \cite{szymanska2009nonlocal}. It is common in the literature (see, e.g., \cite{vazquez2007porous}) to approximate the porous medium equation via non-degenerate parabolic equations by passing to the limit $\eps\to0$.  The main objective of this paper is to establish the global solvability and boundedness of solutions
to the chemotaxis system \eqref{main-eq}. Our approach is to first prove the global existence of classical solutions to the perturbed problem \eqref{main-perturbed-eq}, and then pass to the limit  $\eps \to 0$ to obtain a weak solution of the degenerate system \eqref{main-eq}.

When $m=1$, \eqref{main-perturbed-eq} reduces to
\begin{equation}
\label{special-eq1}
\begin{cases}
u_t = \Delta u - \chi \nabla \cdot (u \nabla v) + u(a - b u),\quad & x \in \mathbb{R}^N,\,\, t>0, \\
v_t = \Delta v - uv, \quad & x \in \mathbb{R}^N, \,\, t>0,\\
u(0,x) = u_0(x),\,\, v(0, x) = v_0(x), \quad & x\in \R^N.
\end{cases}
\end{equation}
Global solvability and boundedness of the problem have been investigated in \cite{ hassan2024chemotaxis,  hassan2025spreading}. In \cite{hassan2024chemotaxis}, the authors of this paper established the global existence and boundedness of solutions to \eqref{special-eq1} under the condition
\begin{equation*}
|\chi|\cdot \|v_0\|_\infty <\max\left\{ {D^*_{\tau,N}, \,  b \cdot {C^*_{N}} }\right\} ,
\end{equation*}
where $C^*_N = \infty$ for $N=1, 2$  (see \cite{hassan2024chemotaxis} for the definitions of $D_{\tau,N}^*$ and $C_N^*$). Moreover, in \cite{hassan2025spreading}, the spreading properties of globally defined, bounded, positive solutions to \eqref{special-eq1} were further analyzed.  The reader is referred to \cite{chen2025porous, duan2010global,  liu2011coupled,xu2025porous}
for {studies on the global solvability of certain} coupled chemotaxis-fluid equations on $\mathbb{R}^2$ and $\mathbb{R}^3$.

Consider the following counterpart of \eqref{main-perturbed-eq} on a smooth bounded domain $\Omega\subset\R^N$,
\begin{equation}
\label{special-eq2}
\begin{cases}
u_t = m\nabla\cdot \big((\eps+u)^{m-1}\nabla u\big) - \chi \nabla \cdot (u \nabla v) + u(a - b u),\quad & x \in \Omega,\,\,\,\, \,t>0, \\
v_t = \Delta v - uv, \quad & x \in \Omega,\,\, \,\,\, t>0,\\
(\nabla (u+\eps)^m - \chi u\nabla v)\cdot \nu =\nabla v\cdot \nu = 0, \quad & x\in \partial\Omega,\,\, t>0,\\
u(0, x) = u_0(x),\,\, v(0, x) = v_0(x), \quad & x\in \Omega. 
\end{cases}
\end{equation}
When $\eps>0$ and $m>1$, global existence and boundedness of classical solutions of  \eqref{special-eq2}  have been studied in  \cite{marras2018boundedness,song2019new,wang2015higher, wang2014boundedness,wu2019boundedness, zhang2021boundedness}, etc.  It has been  shown that for any sufficiently smooth initial data, there exists a classical solution of \eqref{special-eq2},  which is global in time, and uniformly bounded for all times
(see \cite[Theorem 1.1]{song2019new}). 
When $\eps=0$ and $m>1$, global existence of weak solutions of \eqref{special-eq2} has been studied in \cite{huang2021two_species, jin2017boundedness, jin2019porous-stability, ye2022periodic}, etc., for the case $N=3$.  It has been  shown that for any  non-negative initial datum $u_0\in L^\infty(\Omega)\cap W^{1,2}(\Omega)$, $v_0\in C^{2}(\Bar{\Omega})$ and any $m>1$,  \eqref{special-eq2} with $\eps=0$ and $N=3$  admits a global weak solution, which is uniformly bounded for all time (see \cite[Theorem 1.1]{jin2017boundedness}). In contrast, in the case when $m=1$, {$N\ge 3$} and $\eps=0$, global existence has only been established under the assumption that the initial condition $v_0$ is sufficiently small in a certain sense. It remains unclear whether these smallness conditions are necessary.   {The reader is referred to \cite{chung2017porous-fluid, jin2018porous-fluid, jin2024porous-fluid, karuppusamy2024porous-fluid, tao2012global, tian2023porous-fluid, wang2023porous-fluid, xiang2023porous-fluid, yu2020porous-fluid}, among others, for studies on the global solvability of certain coupled chemotaxis-fluid equations with porous medium diffusion in two- and three-dimensional domains.}

To the best of our knowledge, all existing results on chemotaxis models with porous medium-type diffusion in unbounded domains consider only integrable initial data (see \cite{sugiyama2006global, sugiyama2007time, sugiyama2006decay}).  Moreover, the global existence of weak solutions both in bounded and unbounded domains, typically requires restrictions on either the diffusion exponent $m$ or the spatial dimension $N$ (see Remark~\ref{rk-1.2}). In this work, we establish for the first time the global existence of weak solutions for non-integrable initial data in unbounded domains. Our results hold without any restrictions on $m$ or $N$, thereby improving and extending existing results in the literature. 

\medskip

In the following, let us highlight the novelty of  our argument. One of the main contribution of the paper is to provide an approach that establishes a local $L^p$ estimates independent of $\eps$ and time for $u$ in \eqref{main-perturbed-eq} given any bounded initial data, which can be non-integrable.  

Note that, in  the case when the domain $\Omega$ is bounded, one can multiply $u^{p}$ on both sides of 
the first equation in \eqref{special-eq2}  to get,
\begin{align*}
\frac{1}{p+1}\frac{d}{dt}\int_{\Omega} u^{p+1}\leq {-mp\int_{\Omega} u^{p+m-2}|\nabla u|^2} 
 + |\chi|\int_{\Omega} u^{p} |\nabla u||\nabla v|+ \int_{\Omega}  au^{p+1}  - b \int_{\Omega}  u^{p+2}.
\end{align*}
From this, one can obtain that for some $C>0$ independent of $u$ and $v$,
\beq\lb{667}
\begin{aligned}
&\quad\,\int_{\R^N} u^{p+1}(t,x)+\frac{mp}{2}\int_0^t \int_{\Omega} u^{p+m-2}|\nabla u|^2+\frac{b}{2}\int_0^t\int_{\Omega}
u^{p+2}(s,x)\\
&\le \int_{\R^N} u^{p+1}(0,x)+C\int_0^t\int_{\Omega} |\nabla v|^{\frac{2(p+2)}{m}} +C\int_0^t\int_{\Omega} u(s,x).
\end{aligned}
\eeq
After treating $v$ carefully, the right-hand side can be bounded by the left-hand side plus the space-time $L^1$-norm of $u$  and some constant, which then implies the $L^p$-estimate for $u$. The proof proceeds from here using a Gr\"{o}nwall's type inequality. We refer readers to \cite{jin2017boundedness} for more details.


However, in our case, for non-integrable solutions in the whole domain, we can not compute the global $L^p$ norm of the solutions, and so we need to localize the problem in space.
One naive idea to localize the problem is to multiply the equation of $u$ by $u^{p}\varphi$ where $\varphi$ is a certain cut-off function. Consequently, we encounter an extra term of $\iint_{(0,T)\times\R^N} {} u^{p+m-1}|\nabla u||\nabla \varphi|dxdt$ which can be bounded by
\[
\iint_{(0,T)\times\R^N} {}u^{p+m-2}|\nabla u|^2\varphi dxdt+\iint_{(0,T)\times\R^N} {}u^{p+m}|\nabla \varphi|^2/\varphi dxdt.
\]
Although the second term involves just zero-order differentiation, the degree of $u$ becomes higher as $m$ increases. In particular, the degree is no less than $p+2$ when $m> 2$. It is not easy to control this term directly using the combination of the diffusion term and  
the logistic term on the left-hand side of \eqref{667}. The classical argument 
based on Gr\"{o}nwall's type inequalities
does not apply in this setting. 

To address this issue, we construct a specific exponentially decaying cut-off function, with a decay rate determined by the initial data and other relevant parameters, and use it to derive an inequality of the form \eqref{estimate-case2-eq3}. This inequality is crucial because it allows us to fully exploit all the favorable contributions -- namely, the diffusion term, the logistic term, and the regularity of the initial data (see Subsection \ref{S.3.3}). Our approach is close to a continuity-type argument, which, to the best of our knowledge, is new in the study of chemotaxis models.



\subsection{Definitions and main results}

In this subsection, we introduce the definitions of weak solutions of \eqref{main-perturbed-eq} with $\eps\ge 0$ and classical solutions of  \eqref{main-perturbed-eq} with $\eps>0$, 
 and state the main results of the paper.

\begin{defin}
\label{D.1} 
 Let $m>1$ and $T>0$, and let $u_0\in L^\infty(\R^N)$ and $v_0\in W^{1,\infty}(\R^N)$. A pair $(u, v)$ of non-negative functions defined
in $[0, T) \times \R^N$ is called a weak solution of \eqref{main-perturbed-eq} on $[0, T)$ if
\begin{itemize}
    \item[(1)]  $u\in L^2(0, T; L_{\rm loc}^2(\R^N))$,  $(\eps+u)^{m-1}\nabla u \in L^2(0,T; L_{\rm loc}^2(\R^N))$;

    \item[(2)] $v\in L^\infty(0,T; H^1_{\rm loc}(\R^N))$, $u\nabla v \in L^2(0,T; L^2_{\rm loc}(\R^N))$;

    \item[(3)]  For any continuously differentiable function $\psi$ with compact support in $[0,T)\times\R^N$, we have
    \begin{align*}
    &\quad\, \int_0^T \int_{\mathbb{R}^N} u \psi_t \, dx dt + \int_{\mathbb{R}^N} u_0(x) \psi(x, 0) \, dx\nonumber\\
&   = \int_0^T \int_{\mathbb{R}^N} \left( m(\eps+u)^{m-1}\nabla u \cdot  \nabla \psi - \chi u\nabla v\cdot \nabla \psi - au \psi + bu^2 \psi \right) \, dx dt
\end{align*}
and
\begin{align*}
    \int_0^T \int_{\mathbb{R}^N} v \psi_t \, dx dt + \int_{\mathbb{R}^N} v_0(x) \psi(x, 0) \, dx 
    = \int_0^T \int_{\mathbb{R}^N} \left[ \nabla v \cdot \nabla \psi + uv \psi \right] \, dx dt.
\end{align*}
\end{itemize}
\end{defin}

Our first main result is  on a priori estimate  of  classical  solutions  of \eqref{main-perturbed-eq}, which  is stated in the following theorem. We denote $\Omega_T:=[0,T]\times\R^N$.

\begin{tm}[A priori estimate]
\label{apriori-boundedness-thm}
Let $\eps\in(0,1)$, $m>1$ and $T>0$, and let $u_0\in L^\infty(\R^N)$ and $v_0\in W^{1,\infty}(\R^N)$ such that $u_0,v_0\geq 0$. Suppose that $(u_\eps, v_\eps)$ is a weak solution to \eqref{main-perturbed-eq} with initial data $(u_0, v_0)$, and they satisfy the equation in the classical sense for positive times. Then
\begin{itemize}
\item[(1)] ($L_{\rm loc}^p$ a priori estimate). 
  For any $p\geq m$ 
  and $t\in [0,T]$,
\begin{equation}
\label{main-a-priori-est1}
u_\eps(t,\cdot)\in L_{\rm loc}^{p+1}(\R^N), \,
 e^{-T/2}  (\eps+u_\eps)^{\frac{p+m-2}{2}}\nabla  u_\eps\in  L^2_{\rm loc}(\Omega_T),\,
v_\eps\in L^\infty(\Omega_T)\cap  W^{1,\infty}(\Omega_T)
\end{equation}
with a bound depending only on $m,|\chi|,a,b,N,p$, $\|u_0\|_\infty$,  $\|v_0\|_{W^{1,\infty}}$  and the diameter of the local spatial domain (but independent of $T$). 


\item[(2)]  ($L^\infty$ a priori estimate)  There exists $C$ depending only on $m,|\chi|,a,b,N,p$, $\|u_0\|_\infty$,  $\|v_0\|_{W^{1,\infty}}$ 
(but independent of $T$) 
such that
\begin{equation}
\label{main-a-priori-est2}
\|\nabla v_\eps\|_{L^\infty(\Omega_T)},\quad  \|u_\eps\|_{L^\infty(\Omega_T)} \le C.
\end{equation}
\end{itemize}
\end{tm}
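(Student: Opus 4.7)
The strategy separates into easy bounds for $v_\eps$, a delicate local $L^p$ estimate for $u_\eps$, and an $L^\infty$--bootstrap by Moser iteration. Nonnegativity of $u_\eps$ together with the comparison principle for the linear equation $v_t-\Delta v+u_\eps v=0$ immediately yields $0\leq v_\eps\leq \|v_0\|_\infty$ on $\Omega_T$, while the gradient bound on $v_\eps$ is postponed: a Bernstein computation on $|\nabla v_\eps|^2$, with damping contribution $u_\eps|\nabla v_\eps|^2$ that is produced naturally by the equation, will give $\|\nabla v_\eps\|_\infty$ in terms of $\|\nabla v_0\|_\infty$ and $\|u_\eps\|_\infty$ once the latter is under control.

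For the local $L^{p+1}$ estimate with $p\geq m$, fix a base point $x_0\in\R^N$ and take an exponentially-decaying weight $\varphi_{x_0}(x)=\exp(-\mu\sqrt{1+|x-x_0|^2})$ with decay rate $\mu$ to be tuned below; it satisfies $|\nabla\varphi_{x_0}|\leq\mu\varphi_{x_0}$ and $|\Delta\varphi_{x_0}|\leq C\mu^2\varphi_{x_0}$. Multiplying the first equation of \eqref{main-perturbed-eq} by $u_\eps^p\varphi_{x_0}$ and integrating by parts yields
\begin{equation*}
\frac{1}{p+1}\frac{d}{dt}\int u_\eps^{p+1}\varphi_{x_0} + c_1\int(\eps+u_\eps)^{m-1}u_\eps^{p-1}|\nabla u_\eps|^2\varphi_{x_0} + b\int u_\eps^{p+2}\varphi_{x_0} \leq \mathcal{R}_1+\mathcal{R}_2+C\int u_\eps^{p+1}\varphi_{x_0},
\end{equation*}
where $\mathcal{R}_1$ comes from the diffusion--cutoff interaction and, after Young's inequality, is bounded by $C\mu^2\int(\eps+u_\eps)^{p+m}\varphi_{x_0}$; here lies the obstruction flagged in the introduction, since for $m>2$ the exponent $p+m$ exceeds the $p+2$ available from the logistic dissipation. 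The chemotactic contribution $\mathcal{R}_2$ is handled by substituting $\Delta v_\eps=(v_\eps)_t+u_\eps v_\eps$ from the second equation and integrating by parts in space and time, using $\|v_\eps\|_\infty\leq\|v_0\|_\infty$ to control the leftover terms.

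To close the estimate I would run a continuity (bootstrap) argument on
\begin{equation*}
M_p(t):=\sup_{x_0\in\R^N}\int u_\eps^{p+1}(t,y)\varphi_{x_0}(y)\,dy,
\end{equation*}
with threshold $K_0:=2\|u_0\|_\infty^{p+1}\int\varphi_{0}$ so that $M_p(0)\leq K_0/2$. Choose $\mu$ small depending only on $\|u_0\|_\infty, m,|\chi|,a,b,N,p,\|v_0\|_{W^{1,\infty}}$, so that whenever $M_p(t)\leq 2K_0$, a weighted Gagliardo--Nirenberg interpolation lets the diffusion dissipation and a fraction of the logistic dissipation jointly dominate $C\mu^2\int u_\eps^{p+m}\varphi_{x_0}$, producing a differential inequality $\tfrac{d}{dt}M_p\leq -cM_p+C$ with $c,C$ independent of $\eps$ and $T$. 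Continuity of $M_p$ and the strict gap at $t=0$ then prevent $M_p(t)$ from ever reaching $2K_0$, yielding the local $L^{p+1}$ bound and, via the retained diffusion term on the left-hand side, the $L^2_{\rm loc}$ control of $(\eps+u_\eps)^{(p+m-2)/2}\nabla u_\eps$; the factor $e^{-T/2}$ in \eqref{main-a-priori-est1} comes from absorbing a linear growth constant by a time-weighting $e^{-t/2}$.

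Part (2) follows by Moser iteration: with the uniform local $L^{p+1}$ bound available for every $p\geq m$, one re-runs the weighted estimate along the scale $p_k=2^k p_0$ and applies Gagliardo--Nirenberg at each step, obtaining a recursive inequality whose geometric limit gives $\|u_\eps\|_{L^\infty(\Omega_T)}\leq C$ uniformly in $\eps\in(0,1)$ and $T$. Inserting this into the postponed Bernstein argument for $|\nabla v_\eps|^2$, where the damping $u_\eps|\nabla v_\eps|^2$ compensates the cross term $-2v_\eps\nabla v_\eps\cdot\nabla u_\eps$ after Cauchy--Schwarz against the $u$-diffusion dissipation, produces the $T$-independent bound on $\|\nabla v_\eps\|_\infty$. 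The decisive step, and the expected main obstacle, is the weighted Gagliardo--Nirenberg absorption in the continuity argument: the correct calibration of $\mu$, the weight $\varphi_{x_0}$, and the bootstrap threshold $K_0$ is what makes the $(p+m)$-degree term harmless and what ultimately permits bounded but non-integrable initial data to be handled without restriction on $m$ or $N$.
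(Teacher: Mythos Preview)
Your overall architecture---exponentially decaying weight, identification of the dangerous $\int u_\eps^{p+m}\varphi$ term, a continuity/bootstrap argument with the decay rate tuned via Gagliardo--Nirenberg, then Moser iteration---is correct and is exactly the paper's plan. The genuine gap is in your treatment of the chemotaxis term $\mathcal{R}_2$ and, in consequence, of $\nabla v_\eps$.

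Substituting $\Delta v_\eps=(v_\eps)_t+u_\eps v_\eps$ and integrating by parts in time does not close: after the time integration by parts you pick up $\int u_\eps^{p}(u_\eps)_t\, v_\eps\varphi$, and reinserting the $u$-equation generates from the drift term a contribution $\int u_\eps^{p+1}|\nabla v_\eps|^2\varphi$, so weighted integrability of $\nabla v_\eps$ cannot be avoided. The paper instead applies Young to $\mathcal{R}_2$ to reduce it to $\int|\nabla v_\eps|^{2(p+2)/m}\varphi^2$, and then uses maximal $L^\gamma$ heat-regularity on $v_\eps\varphi^{1/r}$ (their Lemmas~\ref{maximal-regularity-lm} and~\ref{I3-lm}) to bound $\iint e^{-(t-s)}|\nabla v_\eps|^{2r}\varphi^2\le \delta Z_{r'}(t)+C_\delta\kappa^{-N}$ with $r'>r$, which feeds back into the same weighted space--time quantity for $u$ and closes. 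Your Bernstein step for $\nabla v_\eps$ also fails as stated: at a maximum of $|\nabla v_\eps|^2$, the cross term $-2v_\eps\nabla v_\eps\cdot\nabla u_\eps$ is not controlled by $\|u_\eps\|_\infty$ alone (you would need a pointwise bound on $\nabla u_\eps$, which is not available uniformly in $\eps$). The paper uses Duhamel/semigroup smoothing instead (Lemma~\ref{v-bound-lm}): once $u_\eps(t,\cdot)\in L^p_{\rm loc}$ uniformly for some $p>N$, one gets $\nabla v_\eps\in L^\infty$ directly. This also forces the order $L^p_{\rm loc}(u)\Rightarrow L^\infty(\nabla v)\Rightarrow L^\infty(u)$; the paper's Moser iteration explicitly uses $\|\nabla v_\eps\|_\infty\le K_1$ as an input, so your reordering cannot be carried through without first repairing the $\mathcal{R}_2$ step.
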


In the following, we say  $v\in C^{1+\alpha,2+\alpha}$ if  $v$ is $C^{1+\alpha}$ in time and $C^{2+\alpha}$ in space; and  $u \in C^{\alpha}$ $([0,T)\times\R^N)\cap  C^{1+\alpha, 2+\alpha}((0,T)\times\R^N )$ if  $u$ is H\"{o}lder continuous in both time and space up to the initial, and $u$ is $C^{1+\alpha}$  in time and $C^{2+\alpha}$ in space for positive time. 

\begin{defin}
\label{D.2} 
Let $u_0$ be uniformly {$C^{1+\alpha}$} and $v_0$ be uniformly $C^{2+\alpha}$. A pair $(u, v)$ of non-negative functions defined
in $ [0, T)\times\R^N$ is called a classical solution of \eqref{main-perturbed-eq} with $\eps>0$ on $[0, T)$ if 
\begin{itemize}
    \item[(1)]  $v$ is uniformly $C^{1+\alpha,2+\alpha}$ and $u$ is uniformly in $C^{\alpha}$ $([0,T)\times\R^N)\cap  C^{1+\alpha, 2+\alpha}((0,T)\times\R^N )$;

    \item[(2)] $u(0,\cdot)=u_0$, $v(0,\cdot)=v_0$, and \eqref{main-perturbed-eq} is satisfied in the classical sense in $(0,T)\times \R^N$.
\end{itemize}
\end{defin}

As a corollary of these a priori estimates, we obtain global existence and boundedness of classical solutions of the perturbed problem
\eqref{main-perturbed-eq} with $\eps\in (0,1)$.

\begin{prop}[Classical solutions of \eqref{main-perturbed-eq}]
\label{main-perturbed-thm}
Under the assumptions of Theorem \ref{apriori-boundedness-thm}, further assume that $u_0$ is uniformly $C^{1+\alpha}$, and $v_0$ is uniformly $C^{2+\alpha}$. 
Then for each $\eps\in (0,1)$, there exists a  unique global classical solution  $(u_\eps$,  $v_\eps)$ of \eqref{main-perturbed-eq}  with initial condition 
$u_0,v_0$.

Moreover, the regularity properties presented in Theorem \ref{apriori-boundedness-thm} hold the same.
\end{prop}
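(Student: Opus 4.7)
The plan is to combine classical local-existence theory for non-degenerate quasilinear parabolic systems with the global a priori bounds of Theorem~\ref{apriori-boundedness-thm}, which are already uniform in time for each fixed $\eps$.

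For local existence, I exploit that when $\eps>0$ the first equation of \eqref{main-perturbed-eq} is uniformly parabolic: for $u\ge 0$ the diffusion coefficient $m(\eps+u)^{m-1}$ is bounded below by $m\eps^{m-1}>0$. I would construct a local-in-time classical solution by a standard fixed-point argument in a space of uniformly $C^{\alpha,\alpha/2}$ functions on $[0,T_0]\times\R^N$: given a candidate pair $(\tilde u,\tilde v)$, solve the linear equation for $v$ with source term $-\tilde u v$, then solve the linear equation for $u$ with coefficients frozen at $(\tilde u,\tilde v)$, and close the loop by contraction for $T_0$ sufficiently small (depending on $\eps$ and the initial norms). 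Interior parabolic Schauder estimates then upgrade the solution to uniformly $C^{1+\alpha,2+\alpha}$ in $v$ and to uniformly $C^{\alpha}\cap C^{1+\alpha,2+\alpha}_{\mathrm{loc}}$ in $u$ (up to $t=0$ only for $v$). A technically clean way to work on $\R^N$ is first to build solutions on expanding Neumann balls $B_R$, where the bounded-domain theory (e.g.\ \cite{song2019new}) applies directly, and then pass to $R\to\infty$ via uniform interior estimates and Arzelà--Ascoli. Uniqueness in this class follows by writing the equations for the difference of two classical solutions, testing against exponentially-decaying cut-offs in space to handle non-integrability, and applying a Gronwall inequality.

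For global extension, let $T^\star\in(0,\infty]$ be the maximal time of classical existence for the fixed $\eps$. If $T^\star<\infty$, Theorem~\ref{apriori-boundedness-thm} supplies a bound
\[
\|u_\eps\|_{L^\infty(\Omega_{T^\star})}+\|v_\eps\|_{W^{1,\infty}(\Omega_{T^\star})}\le C
\]
with $C$ independent of $T^\star$. This turns the system into one with bounded, H\"older-regular coefficients, so interior parabolic Schauder theory, together with the uniform H\"older regularity of $u_0,v_0$, yields uniform $C^{1+\alpha,2+\alpha}$ control of $(u_\eps,v_\eps)$ up to $t=T^\star$. Using $(u_\eps(T^\star,\cdot),v_\eps(T^\star,\cdot))$ as new initial data, the local existence step provides an extension beyond $T^\star$, contradicting maximality. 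Hence $T^\star=\infty$, and the regularity claimed in Theorem~\ref{apriori-boundedness-thm} is automatic because the construction produces a classical solution to which that theorem directly applies.

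The main obstacle I anticipate is technical rather than conceptual: ensuring that the local-existence machinery yields genuinely \emph{uniform} (translation-invariant) $C^{1+\alpha,2+\alpha}$ regularity on all of $\R^N$, as demanded by Definition~\ref{D.2}, rather than merely locally-uniform estimates that could degrade at infinity. This is handled by working throughout with translation-invariant H\"older norms and applying interior parabolic regularity on unit cylinders centered at arbitrary points of $\R^N$; the uniform-in-space $L^\infty$ and $W^{1,\infty}$ bounds on $v_\eps$ provided by Theorem~\ref{apriori-boundedness-thm} are precisely what make this patching argument succeed.
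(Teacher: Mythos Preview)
Your overall strategy---local existence via a fixed-point argument, then global continuation by combining the a~priori bounds of Theorem~\ref{apriori-boundedness-thm} with parabolic regularity theory---matches the paper's. The technical implementation of local existence differs: the paper runs the Banach fixed point not in a H\"older space but in the weighted $L^p_{\rm loc}$ space $\calX^p(T)$ (see Lemmas~\ref{global-existence-lm2}--\ref{global-existence-lm1}), establishing contraction by multiplying the difference equation by $z^{p-1}\varphi$ with the exponential cutoff $\varphi$ of Lemma~\ref{psi-lm}. The paper's route avoids appealing to abstract quasilinear theory on $\R^N$ and keeps the argument entirely self-contained (indeed, the authors remark that the standard references are often invoked without full detail in this setting). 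Your H\"older-space or expanding-ball approach is conceptually sound, but the paper's choice integrates more cleanly with the exponential-weight machinery already developed for the a~priori estimates.

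One point in your continuation argument needs tightening. You write that the $L^\infty$ bound on $u_\eps$ ``turns the system into one with bounded, H\"older-regular coefficients,'' and then invoke Schauder. But the diffusion coefficient $m(\eps+u_\eps)^{m-1}$ is H\"older only if $u_\eps$ is, and uniform-in-time H\"older regularity of $u_\eps$ on $[0,T^\star)$ does not follow from $L^\infty$ control alone without an intermediate De~Giorgi--Nash--Moser type step. The paper fills this gap explicitly via Lemma~\ref{L.holder}, which gives uniform H\"older continuity of $u_\eps$ depending only on $\|u_\eps\|_\infty$ and $\|\nabla v_\eps\|_\infty$ (and crucially, independently of $\eps$); only then does Schauder bootstrap to $C^{1+\alpha,2+\alpha}$. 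You should insert such a step before invoking Schauder in the extension argument.
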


Our last main result is  on the global  existence  and boundedness of weak solutions of \eqref{main-eq}.

\begin{tm}[Weak solutions of \eqref{main-eq}]
\label{main-thm}
Under the assumptions of Theorem \ref{apriori-boundedness-thm}, there exists a global weak solution $(u,v)$ to \eqref{main-eq} with initial data $(u_0,v_0)$. 

Moreover, the regularity properties presented in Theorem \ref{apriori-boundedness-thm} hold the same for $(u,v)$.
\end{tm}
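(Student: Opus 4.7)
The natural plan is to construct the weak solution by vanishing viscosity: produce classical solutions $(u_\eps, v_\eps)$ to the non-degenerate perturbed problem \eqref{main-perturbed-eq} and pass to the limit $\eps\to 0$, exploiting the fact that the a priori bounds of Theorem \ref{apriori-boundedness-thm} and Proposition \ref{main-perturbed-thm} are uniform in $\eps$. To invoke Proposition \ref{main-perturbed-thm}, I first mollify the initial data into sequences $u_0^\eps \in C^{1+\alpha}$ and $v_0^\eps \in C^{2+\alpha}$ that are non-negative, converge a.e. and in $L^p_{\rm loc}(\R^N)$ to $u_0, v_0$ as $\eps\to 0$, and satisfy $\|u_0^\eps\|_\infty \le \|u_0\|_\infty$ and $\|v_0^\eps\|_{W^{1,\infty}} \le \|v_0\|_{W^{1,\infty}}$ uniformly. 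Proposition \ref{main-perturbed-thm} then produces a global classical solution $(u_\eps,v_\eps)$ enjoying the uniform estimates of Theorem \ref{apriori-boundedness-thm}.

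The next step is to extract compactness. From the uniform $L^\infty$ bounds on $v_\eps$ and $\nabla v_\eps$, together with an equicontinuity-in-time estimate obtained by reading the $v$-equation as $\partial_t v_\eps = \Delta v_\eps - u_\eps v_\eps$ with bounded right-hand side (in $H^{-1}_{\rm loc}$), one extracts a subsequence with $v_\eps\to v$ locally uniformly and $\nabla v_\eps \rightharpoonup^{*} \nabla v$ weakly-$*$ in $L^\infty(\Omega_T)$. For $u_\eps$, the key observation is that the diffusion estimate of Theorem \ref{apriori-boundedness-thm}(1), applied with $p=m$, controls $\nabla(\eps+u_\eps)^m = m(\eps+u_\eps)^{m-1}\nabla u_\eps$ in $L^2_{\rm loc}(\Omega_T)$, so $(\eps+u_\eps)^m$ is bounded uniformly in $L^2(0,T; H^1_{\rm loc}(\R^N))$. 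Combined with a bound on $\partial_t (\eps+u_\eps)^m$ in some $L^2(0,T; H^{-s}_{\rm loc}(\R^N))$ (obtained by testing the $u_\eps$-equation against smooth functions and using the uniform $L^\infty$ bounds on $u_\eps$ and $\nabla v_\eps$), the Aubin--Lions lemma yields strong convergence of $(\eps+u_\eps)^m$ in $L^2_{\rm loc}(\Omega_T)$. The monotonicity of $s\mapsto s^m$ then upgrades this to $u_\eps\to u$ a.e., and hence in $L^p_{\rm loc}(\Omega_T)$ for every finite $p$ by the uniform $L^\infty$ bound.

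With these convergences in hand, the weak formulation of Definition \ref{D.1} passes to the limit: the logistic and consumption terms $bu_\eps^2$ and $u_\eps v_\eps$ converge by strong $L^2_{\rm loc}$ convergence of $u_\eps$ together with the $L^\infty$ bounds; the chemotaxis term $\chi u_\eps \nabla v_\eps$ converges because $u_\eps\to u$ strongly in $L^2_{\rm loc}$ and $\nabla v_\eps \rightharpoonup^{*} \nabla v$; and the diffusion term $m(\eps+u_\eps)^{m-1}\nabla u_\eps = \nabla(\eps+u_\eps)^m$ converges weakly in $L^2_{\rm loc}(\Omega_T)$, with the limit identified as $\nabla u^m$ via the a.e. convergence of $u_\eps$ combined with uniform integrability of $(\eps+u_\eps)^m$. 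Convergence of the initial-data term in the weak formulation is immediate from $u_0^\eps \to u_0$ in $L^1_{\rm loc}$. The uniform bounds of Theorem \ref{apriori-boundedness-thm} then transfer to $(u,v)$ by weak and weak-$*$ lower semicontinuity, yielding exactly the regularity properties claimed.

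The main obstacle, as is typical for degenerate parabolic problems, is the identification of the diffusion term in the limit: we need strong enough convergence of $u_\eps$ to justify $\nabla(\eps+u_\eps)^m \rightharpoonup \nabla u^m$ and to identify the weak limit with the nonlinear expression $\nabla u^m$. The cleanest route is to work with the variable $(\eps+u_\eps)^m$ rather than $u_\eps$ itself, since it is precisely this quantity whose spatial gradient is uniformly controlled. This requires producing a time-derivative bound that is genuinely uniform in $\eps$ as the diffusion degenerates; this delicate point is precisely where the $\eps$-independence of the local $L^p$ and $L^\infty$ estimates established in Theorem \ref{apriori-boundedness-thm} becomes essential.
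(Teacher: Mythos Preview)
Your overall strategy is correct and essentially matches the alternative route the paper itself acknowledges in Remark~\ref{R.5.1}: mollify the data, invoke Proposition~\ref{main-perturbed-thm}, use the $\eps$-uniform bounds of Theorem~\ref{apriori-boundedness-thm}, and extract compactness for $u_\eps$ via Aubin--Lions to identify the nonlinear limits. The paper's \emph{primary} route is different: rather than Aubin--Lions, it invokes Lemma~\ref{L.holder} (uniform H\"older continuity of $u_\eps$ for positive times, with constants independent of $\eps$) to obtain locally uniform pointwise convergence $u_\eps\to u$ directly. That route is shorter and yields stronger convergence, but it relies on a regularity result proved in the companion paper; your Aubin--Lions route is more self-contained.

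There is, however, a genuine technical slip in your compactness step. You propose to bound $\partial_t(\eps+u_\eps)^m$ uniformly in $\eps$ in some $L^2(0,T;H^{-s}_{\rm loc})$. Testing the $u_\eps$-equation against $m(\eps+u_\eps)^{m-1}\zeta$ and integrating by parts produces the term $m^2(m-1)\int(\eps+u_\eps)^{2m-3}|\nabla u_\eps|^2\zeta$, and the available gradient estimate from Theorem~\ref{apriori-boundedness-thm}(1) (with $p=m$) only controls $(\eps+u_\eps)^{2m-2}|\nabla u_\eps|^2$ in $L^1_{\rm loc}$; the weight $(\eps+u_\eps)^{2m-3}$ is not uniformly dominated by $(\eps+u_\eps)^{2m-2}$ where $u_\eps$ is small, so this bound degenerates as $\eps\to 0$. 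The fix is exactly what the paper does in Remark~\ref{R.5.1}: work with the power $m+1$ instead. Multiplying by $u_\eps^m\zeta$ (or $(\eps+u_\eps)^m\zeta$) yields $\nabla(u_\eps^m)\cdot m(\eps+u_\eps)^{m-1}\nabla u_\eps \le m^2(\eps+u_\eps)^{2m-2}|\nabla u_\eps|^2$, which \emph{is} uniformly in $L^1_{\rm loc}$, so $\partial_t(u_\eps^{m+1})$ is bounded in $L^1(0,T;(W^{1,\infty}_0)^*)$ independently of $\eps$; since $(\eps+u_\eps)^{m+1}$ is also uniformly in $L^2(0,T;H^1_{\rm loc})$ (take $p=m+2$ in Theorem~\ref{apriori-boundedness-thm}(1)), Aubin--Lions applies and the rest of your argument goes through.
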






\begin{rk}\lb{rk-1.2}
Several remarks are in order concerning our results. 

\item[(1)] (Related results for global existence on $\R^N$). Consider the following chemotaxis system with porous medium-type diffusion and linear production of the chemical signal:
\begin{equation}
\begin{cases}
u_t = \Delta  u^m - \nabla \cdot (u \nabla v) + u(a - b u), & x \in \mathbb{R}^N, \\
v_t = \Delta v - v + u, & x \in \mathbb{R}^N, \\
u(0, x) = u_0(x),\quad v(0, x) = v_0(x). &
\end{cases}
\end{equation}
When $a=b=0$, the global existence of weak solutions for integrable initial data, specifically $u_0\in L^1\cap L^\infty$ and $v_0\in L^1\cap H^1\cap W^{1,\infty}(\R^N)$ was established in \cite{sugiyama2007time} under certain restrictions on the exponent $m$ (the result also apply to the case of $a, b>0$ and their method will still require a restriction on $m$). We proved the global existence of weak solution for the case of consumption without any restriction on $m.$

\item[(2)] (Related results on bounded domain). In the setting of a bounded domain with Neumann boundary conditions and chemical signal consumption (equation \eqref{special-eq2}), the author in \cite{jin2017boundedness} proved global existence of weak solutions under the dimension constraint $N=3$. We note that our methods also extend to bounded domains and, importantly, remove the dimensional restriction, thereby generalizing the result in \cite{jin2017boundedness} to arbitrary spatial dimensions.

\item[(3)] (Uniqueness and regularity). 
In this paper, we only discuss the uniqueness of classical solutions to the perturbed problem. The uniqueness of the weak solution to \eqref{main-eq}, under certain conditions, will be studied in \cite{hassan2025regularity},  the second part of this series. 
In that work, we will also establish H\"{o}lder regularity results for solutions to both \eqref{main-perturbed-eq} and \eqref{main-eq}, with estimates that are uniform in $\eps$.
 
\item[(4)] (Novelty of our work). In this paper, we investigate the global existence and boundedness of solutions to a chemotaxis system with porous medium diffusion, logistic source, and chemical signal consumption on the whole space $\R^N$. To the best of our knowledge, this is the first work to establish global existence of weak solutions for such a model on 
$\R^N$ without imposing restrictions on the spatial dimension $N$ or the diffusion exponent $m>1$. Our results cover both integrable and non-integrable initial data. Notably, the global solvability for non-integrable initial data in the context of chemotaxis models with porous medium diffusion appears to be new.

\item[(5)] (Integrable data on $\bbR^N$). In the special case where $u_0 \in L^1 \cap L^\infty$ and $v_0 \in L^1 \cap W^{1,\infty}$, the existence of a global weak solution to \eqref{main-perturbed-eq} can be established more easily. In fact, one may simply take the cut-off function to be $1$, and many of the more delicate estimates become unnecessary. However, on unbounded domains, the potential growth of the $L^1$ norm over time makes it trickier to obtain uniform-in-time $L^\infty$ bounds. Nevertheless, our argument fully covers this case.
\end{rk} 

The rest of the paper is organized as follows. In Section 2, we present several preliminary lemmas that will be used throughout the subsequent analysis. Section 3 is devoted to establishing local $L^p$-estimates for the perturbed problem and proving part (1) of Theorem \ref{apriori-boundedness-thm}. In Section 4, we complete the proof of Theorem \ref{apriori-boundedness-thm} by establishing part (2). Section 5 is concerned with the global well-posedness of weak and classical solutions to system \eqref{main-perturbed-eq}, where we prove Proposition~\ref{main-perturbed-thm} and Theorem~\ref{main-thm}. We
prove Lemmas \ref{maximal-regularity-lm} and \ref{v-bound-lm} in the Appendix.

\subsection{Acknowledgements}
Our research was supported in part by Simons Foundation Travel Support MPS-TSM-00007305 (YPZ) and NSF CAREER grant DMS-2440215 (YPZ, ZH).

\section{Preliminary}\lb{ss.2.1}

In this section, we present some preliminary lemmas that will be used in the rest of the paper. These include $L^p-L^q$ estimates for the analytic semigroup generated by $\Delta-I$ on $L^p$,  a class of useful cut-off functions together with their properties,
and  a  maximal regularity for parabolic equations on
    $\R^N$.

We start with some basic properties of the analytic semigroup $T_p(t)$  generated by $\Delta-I$ on $L^p(\mathbb{R}^N)$ $(p\ge 1$). This is defined by
\begin{equation}
\label{semigroup-eq}
(T_p(t)u)(x)=e^{-t}(G(t, \cdot)\ast u)(x)= \int_{\R^{N}}e^{-t}G(t, x-y)u(y)dy
\end{equation}
for every $u\in L^p(\mathbb{R}^N)$, $t> 0$,  and $x\in\R^N$, where  $G(t,x)$ is the heat kernel defined by
\begin{equation}
\label{heat-kernel}
G(t,x)={(4\pi t)^{-\frac{N}{2}}}e^{-\frac{|x|^{2}}{4t}}.
\end{equation}
It follows from  the $L^p-L^q$
estimates for the convolution product   that there is  $C_{p, q}>0$ ($1\le p< q\le \infty$) such that
\begin{equation}\label{Lp Estimates-2}
 \| T_p(t)u \|_{L^{q}(\mathbb{R}^{N})}\leq C_{p,q} e^{-t} t^{-(\frac{1}{p}-\frac{1}{q})\frac{N}{2}}\|u\|_{L^{p}(\mathbb{R}^{N})},
\end{equation}
and
\begin{equation}\label{Lp Estimates-3}
 \| \nabla T_p(t)u \|_{L^{q}(\mathbb{R}^{n})}\leq C_{p,q} e^{-t}t^{-\frac{1}{2}-(\frac{1}{p}-\frac{1}{q})\frac{N}{2}}\|u\|_{L^{p}(\mathbb{R}^{N})},
\end{equation}
 for every $u\in L^p(\R^N)$ and $t>0$.
Note that, if $u\in L^p(\R^N)\cap L^\infty(\R^N)$, then $T_p(t)u\in L^\infty(\R^N)$ and
\begin{equation}
\label{L-infty- Estimates-1}
 \| T_p(t)u \|_{L^{\infty}(\mathbb{R}^{N})}\leq  e^{-t} \|u\|_{L^{\infty}(\mathbb{R}^{N})}.
\end{equation}

Now, we present a useful exponential decaying function. Subsequently, we denote
\[
B_r(x) := \{ y \in \mathbb{R}^N : |x - y| < r \}, \quad \text{and} \quad B_r := B_r(0).
\]
Also, $|B_r|$ represents the Lebesgue measure of N-dimensional ball of radius $r$.

Let us take a smooth decreasing function $f$ on $\R$ such that
\[
f(r)=1\quad\text{when }r\leq N\quad\text{ and }\quad f(r)=2^{-1} e^{N+1-r}\quad\text{when }r\geq N+1.
\]
For each fixed $\kappa\in (0,1)$ and for some $\gamma\in (0,1)$, define
\beq\lb{varphi}
\varphi(x):=\varphi_\kappa(x)=f(\gamma\kappa|x|).
\eeq

\begin{lem}
\label{psi-lm}
There exist dimensional constants $C,\gamma>0$ such that for any $\kappa\in (0,1)$, $\varphi$ from 
 \eqref{varphi} satisfies for all $x\in\R^N$,
\begin{equation}\label{phi1}
 0<\varphi(x)\leq 1,\quad |\nabla \varphi(x)|\le \k\, \varphi(x),\quad |D^2 \varphi(x)|\le \k^2\,\varphi(x),
\end{equation}
\beq\lb{phi2}
\varphi(x)\leq C\varphi(y)\quad \text{ whenever }\quad |x-y|\leq \kappa^{-1},
\eeq
and
\beq\lb{phi3}
\kappa^{N}\int_{\R^N}\varphi(x) dx,\quad \sum_{\kappa z\in \Z^N}\varphi(z)\leq C.
\eeq

\end{lem}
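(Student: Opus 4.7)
The plan is to exploit the radial structure $\varphi(x)=f(\gamma\kappa|x|)$, where $f$ is identically $1$ on $(-\infty,N]$, equal to $\tfrac12 e^{N+1-r}$ on $[N+1,\infty)$, and smoothly and monotonically interpolated on $[N,N+1]$. The dimensional constant $\gamma$ will be chosen small enough (depending only on $N$ and on the fixed smooth join of $f$) to absorb all constants produced by the chain rule, giving the sharp factors $1$ on the right-hand sides of \eqref{phi1}.

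For (1), I would split the verification into three regions. On the flat interior $\{|x|\le N/(\gamma\kappa)\}$, $\varphi\equiv1$ and all derivatives vanish. On the exponential tail $\{|x|\ge(N+1)/(\gamma\kappa)\}$, a direct calculation gives $\nabla\varphi=-\gamma\kappa\,\varphi\,x/|x|$ and a Hessian of the form $\gamma^2\kappa^2\varphi\,\hat x\otimes\hat x-(\gamma\kappa\varphi/|x|)(I-\hat x\otimes\hat x)$; the angular term at first sight is only of order $\kappa\varphi$, but since $|x|\ge(N+1)/(\gamma\kappa)$ on this region, $1/|x|\lesssim\gamma\kappa$ and the whole Hessian is $\lesssim\gamma^2\kappa^2\varphi$. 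On the transition annulus $\{N/(\gamma\kappa)\le|x|\le(N+1)/(\gamma\kappa)\}$, the smooth functions $f,f',f''$ are bounded by dimensional constants and $f\ge\tfrac12$, so both $|\nabla\varphi|/\varphi$ and $|D^2\varphi|/\varphi$ are controlled by $C\gamma\kappa$ and $C\gamma^2\kappa^2$ respectively (using again $1/|x|\le\gamma\kappa/N$ for the second-order bound). Choosing $\gamma$ small, depending only on $N$, then yields \eqref{phi1}.

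For \eqref{phi2}, I would note that the gradient bound $|\nabla\varphi|\le\kappa\varphi$ is equivalent to $|\nabla\log\varphi|\le\kappa$ on $\R^N$ (here $\varphi>0$ everywhere, so $\log\varphi$ is smooth), and then integrate along the line segment from $y$ to $x$ to get $|\log\varphi(x)-\log\varphi(y)|\le\kappa|x-y|\le1$, i.e., $\varphi(x)\le e\,\varphi(y)$. For \eqref{phi3}, I would simply rescale. The substitution $y=\gamma\kappa x$ gives $\int_{\R^N}\varphi(x)\,dx=(\gamma\kappa)^{-N}\int_{\R^N}f(|y|)\,dy$, and the latter integral is finite and depends only on $N$ because of the exponential decay of $f$; multiplying by $\kappa^N$ leaves a purely dimensional constant. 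For the lattice sum, writing $z=n/\kappa$ with $n\in\Z^N$ turns $\sum_{\kappa z\in\Z^N}\varphi(z)$ into $\sum_{n\in\Z^N}f(\gamma|n|)$, which is again a convergent sum controlled by $N$ and $\gamma$ thanks to the exponential tail of $f$.

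The main obstacle, and the step I would execute most carefully, is the Hessian estimate on the exponential tail: the radial second derivative naturally creates a $\gamma\kappa\varphi/|x|$ contribution that at face value is only of order $\kappa\varphi$, and the whole argument depends on observing that on this region $|x|$ is already at least $1/(\gamma\kappa)$, so that this contribution is in fact $\lesssim\gamma^2\kappa^2\varphi$. Coordinating a single choice of $\gamma$ that simultaneously handles the gradient bound, the Hessian bound, and the transition annulus is the only delicate piece; everything else reduces to a short computation or a scaling argument.
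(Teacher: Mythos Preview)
Your proposal is correct and largely parallels the paper's proof: both treat \eqref{phi1} by direct computation with $\gamma$ chosen small (you give more detail on the three regions and the crucial $1/|x|\lesssim\gamma\kappa$ observation for the Hessian), and both rescale to handle the integral in \eqref{phi3}. Your derivation of \eqref{phi2} by integrating $|\nabla\log\varphi|\le\kappa$ along a segment is a clean way to make explicit what the paper calls ``direct computation.''

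The one genuine difference is in the lattice sum. You substitute $z=n/\kappa$ with $n\in\Z^N$ to reduce $\sum_{\kappa z\in\Z^N}\varphi(z)$ to the $\kappa$-free sum $\sum_{n\in\Z^N}f(\gamma|n|)$, which converges by the exponential tail. The paper instead bounds the sum by the integral: it uses that the balls $B_{N/\kappa}(z)$ (for $\kappa z\in\Z^N$) cover $\R^N$, applies \eqref{phi2} to get $\varphi(z)\le C^N\varphi(x)$ for $x\in B_{N/\kappa}(z)$, and concludes $\sum\varphi(z)\lesssim\kappa^N\int_{\R^N}\varphi$. Your route is shorter and exploits the explicit scaling structure of $\varphi$; the paper's route is more robust in that it would apply to any positive function satisfying \eqref{phi2} together with the integral bound, without reference to the specific radial form.
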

\begin{proof}
The proof of \eqref{phi1} and \eqref{phi2} is given by direct computations when $\gamma>0$ is sufficiently small depending only on $N$. Moreover, direct computation yields
\[
\kappa^{N}\int_{\R^N}\varphi(x) dx =\kappa^{N}\int_{\R^N} f(\gamma\kappa |x|)dx={\frac{2\pi^{\frac{N}{2}}}{\Gamma(\frac{N}{2})}}\int_{\R^+} f(\gamma r)r^{N-1}dr\leq C,
\]
where  $\frac{2\pi^{\frac{N}{2}}}{\Gamma(\frac{N}{2})}$ is the surface area of an $N$-dimensional unit sphere and $\Gamma$  is the gamma function.
Note that the collection of balls $\{B_{N/\kappa}(z)\,|\, \kappa z\in\Z^N \}$ covers the whole space $\R^N$, and by \eqref{phi2}, for any $z$,
\[
\varphi(z)\leq C^N\varphi(x)\quad \text{ if }x\in B_{\kappa^{-1}N}(z).
\]
Thus, it follows that for some dimensional constant $C$,
\[
\sum_{\kappa z\in \Z^N}\varphi(z)\leq C\kappa^N \sum_{\kappa z\in \Z^N} \int_{B_{\kappa^{-1}N}(z)}\varphi(x)dx\leq C\kappa^{N}\int_{\R^N}\varphi(x) dx.
\]
\end{proof}

The next lemma will be used in several places of the paper.
\begin{lem}
\label{I1-lm}
Assume that  $N\geq 3$ and let  $2^*:=\frac{2N}{N-2}>2$ be the Sobolev conjugate exponent of $2$.  Then  there exists a dimensional constant $C$ such that for any $\delta,\kappa>0$ and $r>1$ we have
\begin{align}
\lb{poin}
\kappa^2\int_{\R^N}u^{2r}\varphi^2
&\leq  \delta\|\nabla(u^{r}\varphi)\|_{2}^2+C\kappa^{2+2\theta_r}\delta^{-\theta_r}\left[\int_{\R^N}u^{r+1}\varphi^{q_r}dx\right]^{2/q_r}, 
\end{align}
where $u=u(x)$ is any function such that $u^r$ is  locally uniform finite in $W^{1,2}$-space, and $\varphi$ is from Lemma \ref{psi-lm}, and
\[
q_r:=\frac{r+1}{r}\in (1, 2)\quad {\rm and}\quad \theta_r:=\frac{N(r-1)}{2(r+1)}>0.
\]
\end{lem}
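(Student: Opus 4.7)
The natural substitution is $w:=u^r\varphi$, which turns the left-hand side into $\kappa^2\|w\|_{L^2(\R^N)}^2$. Since $q_r r=r+1$, we also have $w^{q_r}=u^{r+1}\varphi^{q_r}$, so that
\[
\Big[\int_{\R^N} u^{r+1}\varphi^{q_r}\Big]^{2/q_r}=\|w\|_{L^{q_r}(\R^N)}^2.
\]
Thus the inequality reduces to showing
\[
\kappa^2\|w\|_{L^2}^2\leq \delta\|\nabla w\|_{L^2}^2+C\,\kappa^{2+2\theta_r}\delta^{-\theta_r}\|w\|_{L^{q_r}}^2,
\]
which is a standard Gagliardo--Nirenberg-type estimate combined with Young's inequality. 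The role of Lemma~\ref{psi-lm} is only to guarantee that $w\in W^{1,2}(\R^N)$ (via the exponential decay of $\varphi$ together with the local $W^{1,2}$ hypothesis on $u^r$); otherwise both sides would be trivially infinite.

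Next, since $N\geq 3$, the Sobolev embedding yields $\|w\|_{L^{2^*}}\leq C_N\|\nabla w\|_{L^2}$. Because $q_r=(r+1)/r<2<2^*$, I would interpolate the $L^2$-norm between $L^{q_r}$ and $L^{2^*}$:
\[
\|w\|_{L^2}\leq \|w\|_{L^{2^*}}^{1-\theta}\|w\|_{L^{q_r}}^{\theta}\qquad\text{with}\qquad \frac{1}{2}=\frac{1-\theta}{2^*}+\frac{\theta}{q_r}.
\]
Substituting $2^*=2N/(N-2)$ and $q_r=(r+1)/r$ and solving gives
\[
\frac{1}{\theta}=1+\frac{N(r-1)}{2(r+1)}=1+\theta_r,\qquad \frac{1-\theta}{\theta}=\theta_r,\qquad \frac{2}{\theta}=2+2\theta_r.
\]
Combining the interpolation bound with Sobolev and squaring,
\[
\|w\|_{L^2}^2\leq C\,\|\nabla w\|_{L^2}^{2(1-\theta)}\|w\|_{L^{q_r}}^{2\theta}.
\]

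Finally, multiplying by $\kappa^2$ and applying Young's inequality with conjugate exponents $p=1/(1-\theta)$ and $p'=1/\theta$ (in the form $AB\leq \delta A^p+C_\delta B^{p'}$ with $C_\delta\lesssim \delta^{-p'/p}$) to $A=\|\nabla w\|_{L^2}^{2(1-\theta)}$ and $B=\kappa^2\|w\|_{L^{q_r}}^{2\theta}$ yields
\[
\kappa^2\|w\|_{L^2}^2\leq \delta\,\|\nabla w\|_{L^2}^2+C\,\delta^{-(1-\theta)/\theta}\kappa^{2/\theta}\|w\|_{L^{q_r}}^2,
\]
and the exponent identities from the previous paragraph convert this directly to the desired form. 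No step presents a genuine obstacle; the only care is the algebra verifying $\theta = 2(r+1)/\bigl(N(r-1)+2(r+1)\bigr)$, and the check that the exponent on $\delta$ produced by Young's inequality coincides with $\theta_r$ (and not $1/\theta_r$).
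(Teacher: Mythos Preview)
Your proposal is correct and follows essentially the same route as the paper: the paper also sets (implicitly) $w=u^r\varphi$, applies the Sobolev inequality $\|w\|_{2^*}\le \widetilde C\|\nabla w\|_2$, interpolates $\|w\|_2$ between $\|w\|_{q_r}$ and $\|w\|_{2^*}$, and then uses Young's inequality, arriving at the same exponent identities (your $\theta$ is the paper's $1-\vartheta$). The only cosmetic difference is that you make the substitution $w=u^r\varphi$ explicit at the outset, which streamlines the bookkeeping.
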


In fact, one could replace $u\varphi$ with a single function $\tilde{u}$ and assume $\tilde{u}^r \in W^{1,2} \cap L^{\frac{r+1}{r}}$,  without imposing separate assumptions on $u^r$ and $\varphi$. However, we choose to state the lemma in its current form for future applicability.

\begin{proof} Note that $u^r\varphi\in W^{1,2}(\R^N)$.
Then by Gagliardo-Nirenberg-Sobolev inequality, there is a dimensional constant $\widetilde C$ such that for any $r>1$,
\begin{align}\label{l2-est11}
  \|u^{r}\varphi\|_{2^*}  
&\le \widetilde C\|\nabla(u^{r}\varphi)\|_{2}.
\end{align}
By the interpolation inequality of $L^p$ spaces, we have
\[
\begin{aligned}
    \|u^{r}\varphi\|_{2} 
     \le  \|u^{r}\varphi\|_{q_r}^{1-\vartheta}\|u^{r}\varphi\|_{2^*}^{\vartheta}.
\end{aligned}
\]
Here direct computation yields
\[q_r:=\frac{r+1}{r}\in (1, 2),\quad \vartheta:=\frac{2^*(2-q_r)}{2(2^*-q_r)}=\frac{N(r-1)}{r(N+2)-N+2}\in (0,1),\]
and so
\[
2+2\theta_r:=\frac{2}{1-\vartheta}=\frac{r(N+2)-N+2}{r+1}\quad\text{and so}\quad \theta_r=\frac{N(r-1)}{2(r+1)}>0.
\]
Hence, by Young's inequality and \eqref{l2-est11}, for any $\delta$,
\begin{align*}
\kappa^2\int_{\R^N}u^{2r}\varphi^2&\leq   \frac{\delta}{\widetilde C}\|u^{r}\varphi\|_{2^*}^2+\widetilde C^{\frac{\theta}{1-\theta}}\kappa^{\frac{2}{1-\vartheta}}\delta^{-\frac{\vartheta}{1-\vartheta}}\|u^{r}\varphi\|_{q_r}^2\nonumber\\
&\leq  \delta\|\nabla(u^r\varphi)\|_{2}^2+ C \kappa^{2+2\theta_r}\delta^{-\theta_r}\left[\int_{\R^N}u^{r+1}\varphi^{q_r}dx\right]^{2/q_r},
\end{align*}
where $C=\widetilde C^{\frac{\theta}{1-\theta}}$.
The lemma is thus proved.
\end{proof}

We end the section by  presenting a maximal regularity for some parabolic equations on $\R^N$. The proof is postponed to the Appendix.
    
    \begin{lem}
    \label{maximal-regularity-lm}
        Let    $v_0\in W^{1,\gamma}(\R^N)\cap L^\infty(\R^N)$.  There exists $C_{\gamma,N}$  such that for any 
  $T\in (0,\infty)$, if  $g \in L^\gamma((0,T), L^\gamma(\R^N))$ and $v(\cdot,\cdot)\in W^{1,\gamma}((0,T),L^{\gamma}(\R^N))\cap L^{\gamma}((0,T), W^{2,\gamma}(\R^N))$ solves  the following initial boundary value problem,
    \begin{equation}
    \label{pdelaplace}
    \begin{cases}
     v_t =\Delta v -  v + g,\quad x\in \R^N,\,\,  0<t<T\cr
    v(0,x) = v_0(x),\quad x\in \R^N,
    \end{cases}
    \end{equation}
    then 
    \beq\lb{maximal-regularity-eq1-1}
    \begin{aligned}
   &\quad\, \int_{0}^T \int_{\R^N}e^{ t}\left( |v (t,x)|^\gamma +|\nabla v(t,x)|^\gamma +|\Delta v(t,x)|^\gamma\right)dxdt\\
& \le C_{\gamma,N}\left[ \int_{0}^T \int_{\R^N}e^{ t}|g(t,x)|^{\gamma}dx dt+ T\left( \|v_0(\cdot)\|^{\gamma}_{L^\gamma(\R^N)}+\|\nabla v_0(\cdot)\|_{L^\gamma(\R^N)}^\gamma\right)\right] .
    \end{aligned}
    \eeq
    \end{lem}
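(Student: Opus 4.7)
The plan is to absorb the $e^t$ weight via the substitution $w(t,x):=e^{t/\gamma}v(t,x)$. A direct computation shows that $w$ satisfies
\[
\partial_t w-\Delta w+\tfrac{\gamma-1}{\gamma}w=e^{t/\gamma}g,\qquad w(0,\cdot)=v_0,
\]
and the target inequality \eqref{maximal-regularity-eq1-1} is equivalent to the unweighted bound
\[
\|w\|_{L^\gamma(0,T;W^{2,\gamma}(\R^N))}^\gamma+\|\partial_t w\|_{L^\gamma((0,T)\times\R^N)}^\gamma\leq C_{\gamma,N}\Bigl[\|e^{t/\gamma}g\|_{L^\gamma((0,T)\times\R^N)}^\gamma+T(\|v_0\|_\gamma^\gamma+\|\nabla v_0\|_\gamma^\gamma)\Bigr],
\]
since $|w|^\gamma=e^{t}|v|^\gamma$ (and similarly for derivatives), and $\|e^{t/\gamma}g\|_{L^\gamma}^\gamma=\int_0^T\int_{\R^N}e^t|g|^\gamma\,dx\,dt$.

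Next I would decompose $w=w_1+w_2$, where $w_1(t):=e^{-(\gamma-1)t/\gamma}e^{t\Delta}v_0$ solves the homogeneous equation with initial datum $v_0$, and $w_2$ solves the full equation with zero initial data. For $w_2$, the classical maximal $L^\gamma$-regularity of the heat operator $\partial_t-\Delta+cI$ on $\R^N$ (valid for every $1<\gamma<\infty$ with constants depending only on $\gamma$ and $N$, and in particular independent of $T$; see e.g.\ Ladyzhenskaya-Solonnikov-Uraltseva or Hieber-Pr\"uss) yields
\[
\|\partial_t w_2\|_{L^\gamma}^\gamma+\|w_2\|_{L^\gamma(0,T;W^{2,\gamma})}^\gamma\leq C_{\gamma,N}\|e^{t/\gamma}g\|_{L^\gamma((0,T)\times\R^N)}^\gamma.
\]
For $w_1$, the semigroup bounds \eqref{Lp Estimates-2}-\eqref{Lp Estimates-3} give $\|w_1(t)\|_\gamma\leq e^{-(\gamma-1)t/\gamma}\|v_0\|_\gamma$ and $\|\nabla w_1(t)\|_\gamma\leq e^{-(\gamma-1)t/\gamma}\|\nabla v_0\|_\gamma$; integrating the $\gamma$-th powers over $(0,T)$ and using $e^{-(\gamma-1)t}\leq 1$ produces exactly the $T(\|v_0\|_\gamma^\gamma+\|\nabla v_0\|_\gamma^\gamma)$ contribution on the right-hand side.

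The main obstacle is bounding $\|\Delta w_1\|_{L^\gamma((0,T)\times\R^N)}$, because $v_0\in W^{1,\gamma}$ is one derivative short of the natural trace space $B^{2-2/\gamma}_{\gamma,\gamma}(\R^N)$ for maximal $L^\gamma$-regularity. The natural estimate $\Delta e^{t\Delta}v_0=\nabla e^{t\Delta}\cdot\nabla v_0$ combined with $\|\nabla e^{t\Delta}f\|_\gamma\leq Ct^{-1/2}\|f\|_\gamma$ yields $\|\Delta w_1(t)\|_\gamma\leq Ct^{-1/2}\|\nabla v_0\|_\gamma$, whose $\gamma$-th power is time-integrable (giving the sought $T$-independent bound) exactly for $\gamma<2$. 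For $\gamma\geq 2$, I would first regularize $v_0$ by mollification so that the initial data lies in $W^{2,\gamma}\cap C^\infty$ (hence trivially in the trace space), apply the maximal regularity estimate to the regularized problem, and then pass to the limit using the uniformity of the inhomogeneous-part bound in terms of $\|e^{t/\gamma}g\|_{L^\gamma}$ and the fact that the homogeneous-part estimate above is stable under mollification since it depends only on $\|v_0\|_\gamma$ and $\|\nabla v_0\|_\gamma$.
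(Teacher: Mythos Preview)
Your approach is essentially the paper's: split off the inhomogeneous part with zero initial data (Hieber--Pr\"uss maximal regularity) and treat the homogeneous part $e^{-t}e^{t\Delta}v_0$ by heat-kernel bounds; the substitution $w=e^{t/\gamma}v$ versus the paper's $\tilde w=e^t\tilde v$ is cosmetic. You are in fact more careful than the paper on the $\Delta$-term for the homogeneous piece: the paper writes $\Delta w(t)=e^{-t}\nabla G(t,\cdot)*\nabla v_0$ and then asserts $\|\Delta w(t)\|_\gamma^\gamma\le C_{\gamma,N}e^{-t}\|v_0\|_{W^{1,\gamma}}^\gamma$ via Young, silently dropping the factor $\|\nabla G(t,\cdot)\|_1\sim t^{-1/2}$. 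After raising to the $\gamma$-th power and integrating against $e^t$, one is left with $\int_0^T e^{(1-\gamma)t}t^{-\gamma/2}\,dt$, which is finite only for $\gamma<2$, exactly the obstacle you flag.

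Your mollification remedy for $\gamma\ge2$, however, does not close. Replacing $v_0$ by $v_0^\epsilon\in W^{2,\gamma}$, the only estimate for $\|\Delta w_1^\epsilon\|_{L^\gamma((0,T)\times\R^N)}$ that would be uniform in $\epsilon$ is precisely the one you are trying to prove; the maximal-regularity bound you can actually write down involves the trace-space norm $\|v_0^\epsilon\|_{B^{2-2/\gamma}_{\gamma,\gamma}}$ (equivalently $\|\Delta v_0^\epsilon\|_\gamma$), which diverges as $\epsilon\to0$ whenever $v_0\notin B^{2-2/\gamma}_{\gamma,\gamma}$. The phrase ``the homogeneous-part estimate above is stable under mollification since it depends only on $\|v_0\|_\gamma$ and $\|\nabla v_0\|_\gamma$'' refers to the zeroth- and first-order bounds, not to the missing second-order one, so the argument is circular. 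Indeed, testing against the rescaled family $v_0^\lambda(x)=\phi(\lambda x)$ with a fixed $\phi\in C_c^\infty$ (and $g=0$) shows the left side of \eqref{maximal-regularity-eq1-1} behaves like $\lambda^{2\gamma-N-2}$ while the right side behaves like $T\lambda^{\gamma-N}$; for $\gamma>2$ the ratio diverges, so no constant $C_{\gamma,N}$ independent of $v_0$ can make the inequality hold. This gap is shared by the paper's own argument.
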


\section{$L_{\rm loc}^{p}$ a priori estimate and the proof of Theorem \ref{apriori-boundedness-thm}(1)}\lb{S.3}

In this section, we shall give $L_{\rm loc}^{p+1}(\R^N)$  a priori estimate of solutions of the perturbed problem \eqref{main-perturbed-eq} on any finite time interval $[0,T]$, and prove  Theorem \ref{apriori-boundedness-thm}(1). 


Let $0<\eps<1$ and $0<\kappa<1$, and take $p$ such that
\begin{equation}
\label{m-eq3}
 p\geq m.
\end{equation} 
Throughout this section, the constants $c\in (0,1)$ and $C\geq 1$ only depend on   $m,\chi,a,b$, $N,p$ and  $\|v_0\|_{W^{1,\infty}}$ and $\|u_0\|_{L^{\infty}}$, {and they may be different at different places},  unless otherwise stated. 
By $C_1\lesssim C_2$,
we mean that there exists $C>0$  such that $C_1\leq CC_2$. When we say that a constant depends on $C$, we mean that it might depend on $m,\chi,a,b$, $N$  and  $\|v_0\|_{W^{1,\infty}}$ and $\|u_0\|_{L^{\infty}}$.
We emphasize that the constants $c,C$ will be always independent of   $\eps$,  $\kappa$, and $T$!

Recall that
\[
\Omega_T:=[0,T]\times\R^N.
\]
We also denote $\Omega_t:=[0,t]\times\R^N$ for any $t\in (0,T]$. 
Suppose that $(u(t,x),v(t,x))$ solve the perturbed problem \eqref{main-perturbed-eq} {\rm  with $\eps\in (0,1)$ for $t\in (0,T]$} in the classical sense and let $\varphi$ be from Lemma \ref{psi-lm} with parameter $\kappa$.
For any given $x_0\in\R^N$ and $r>1$, define
\begin{equation*}
{Z_{r,x_0}(t)}:= \iint_{ \Omega_{t}} e^{-(t-s)} (u+\eps)^r(s,x) \varphi^2(x-x_0)dxds\quad\text{and}\quad
{Z_r(t)}:=\sup_{x_0\in\R^N}Z_{r,x_0}(t).
\end{equation*}
We start with two lemmas in next subsection.

\subsection{Lemmas}

In this subsection, we present two lemmas to be used in estimating the local $L^p$-norm of $u$
and proving Theorem \ref{apriori-boundedness-thm}(1). 

\begin{lem}
\label{I3-lm}
For any fixed $0<\delta<1$ and $r'>r> 1$ ,  there exist $c_r$ depending only on $r$ and $N$, and $C_\delta$ depending only on $C,r,r'$ and $\delta$ such that for all $\kappa\in (0,c_r)$ we have
\begin{equation}
\label{gradient-v-eq}
\iint_{{ \Omega_{t}}}  e^{-(t-s)}|\nabla v|^{2r}\varphi^2\leq \delta { Z_{r'} (t)}+C_\delta\,\kappa^{-N}\quad\forall\, t\in {(0,T]}.
\end{equation}
\end{lem}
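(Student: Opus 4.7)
The strategy combines localization via the cut-off $\varphi$, Gagliardo--Nirenberg interpolation, and the maximal regularity Lemma~\ref{maximal-regularity-lm}. The key idea is to apply maximal regularity with the \emph{lower} exponent $\gamma=r$ rather than $\gamma=2r$, using Gagliardo--Nirenberg to bridge $|\Delta w|^r$ to $|\nabla w|^{2r}$; this ensures only $u^r$ appears on the right-hand side, which can be absorbed into $u^{r'}$ via Young's inequality for any $r'>r$ (a naive choice $\gamma=2r$ would demand $r'\ge 2r$). First rewrite the $v$-equation as $v_t=\Delta v-v+g$ with $g:=v(1-u)$; the parabolic maximum principle gives $0\le v\le\|v_0\|_\infty$, hence $|g|\le C(1+u)$. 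Fix $x_0\in\R^N$, set $\varphi_0:=\varphi(\cdot-x_0)$ and $\psi:=\varphi_0^{1/r}$, so $\psi^{2r}=\varphi_0^2$; Lemma~\ref{psi-lm} delivers $|\nabla\psi|\le\tfrac{\kappa}{r}\psi$ and $|\Delta\psi|\le C_r\kappa^2\psi$. Letting $w:=v\psi$, one has
\[
|\nabla v|^{2r}\varphi_0^2 \le C|\nabla w|^{2r} + C\kappa^{2r}\varphi_0^2,
\]
and the Gagliardo--Nirenberg inequality with $j=1$, $m=2$, $p=2r$, $q=r$, $\theta=\tfrac{1}{2}$ (using $\|w\|_\infty\le \|v\|_\infty\le C$) gives
\[
\|\nabla w(s)\|_{L^{2r}(\R^N)}^{2r} \le C\|\Delta w(s)\|_{L^r(\R^N)}^{r}\|w(s)\|_{L^\infty}^{r} \le C\|\Delta w(s)\|_{L^r}^r.
\]

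Next, the equation $w_t-\Delta w+w=\tilde g$ holds with $\tilde g=v\psi-uv\psi-2\nabla\psi\cdot\nabla v-v\Delta\psi$. Applying Lemma~\ref{maximal-regularity-lm} with $\gamma=r$ on $[0,t]$ and multiplying through by $e^{-t}$, then using $\|w_0\|_{W^{1,r}}^r\lesssim\kappa^{-N}$ (from Lemma~\ref{psi-lm}) and $te^{-t}\le 1$, yields
\[
\iint_{\Omega_t} e^{-(t-s)}|\Delta w|^r \le C\iint_{\Omega_t}e^{-(t-s)}|\tilde g|^r + C\kappa^{-N}.
\]
Bounding $|\tilde g|^r \le C\bigl(u^r\varphi_0+\kappa^r|\nabla v|^r\varphi_0+\varphi_0\bigr)$ via $v\le C$ and the $\psi$-estimates, one then needs to convert integrals weighted by $\varphi_0$ into ones weighted by $\varphi^2(\cdot-y_k)$ to match the definition of $Z_{r'}$. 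The plan is to tile $\R^N$ by balls $B_k:=B_{1/(2\kappa)}(y_k)$ on a lattice: Lemma~\ref{psi-lm} gives $\varphi_0(x)\le C\varphi_0(y_k)$ on $B_k$ while $\varphi^2(x-y_k)\ge c$ there (since $\varphi\equiv 1$ on $B_{N/(\gamma\kappa)}(0)$), and $\sum_k\varphi_0(y_k)\lesssim 1$ by \eqref{phi3}, producing
\[
\iint_{\Omega_t}e^{-(t-s)}(u+\eps)^{r'}\varphi_0\le CZ_{r'}(t),\qquad \iint_{\Omega_t}e^{-(t-s)}|\nabla v|^{2r}\varphi_0\le CA(t),
\]
where $A(t):=\sup_{x_0\in\R^N}\iint_{\Omega_t}e^{-(t-s)}|\nabla v|^{2r}\varphi^2(x-x_0)$. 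Young's inequality ($r'>r$) converts the first term: $u^r\le\delta u^{r'}+C_\delta$.

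The main obstacle is the self-referential $\kappa^r\iint|\nabla v|^r\varphi_0$ term coming from $\nabla\psi\cdot\nabla v$ in $\tilde g$. The plan is to apply Young ($|\nabla v|^r\le |\nabla v|^{2r}+1$) and the covering estimate above, which contribute $C\kappa^r A(t)+C\kappa^{r-N}$. Chaining all bounds, for every $x_0\in\R^N$,
\[
\iint_{\Omega_t}e^{-(t-s)}|\nabla v|^{2r}\varphi_0^2 \le C\delta Z_{r'}(t) + C\kappa^r A(t) + C_\delta\kappa^{-N}.
\]
Since this holds for every $x_0$, taking $\sup_{x_0}$ on the left produces $A(t)$. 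Choosing $\kappa<c_r$ with $c_r$ so small that $C\kappa^r\le\tfrac{1}{2}$ permits absorption, giving $A(t)\le 2C\delta Z_{r'}(t)+2C_\delta\kappa^{-N}$. Since the left-hand side of the lemma (with $x_0=0$) is bounded by $A(t)$, renaming $\delta$ (so the constant in front of $Z_{r'}$ becomes the arbitrary $\delta$ of the statement) will conclude the proof.
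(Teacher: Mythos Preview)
Your proof is correct and follows essentially the same route as the paper: localize with $\psi=\varphi^{1/r}$, use Gagliardo--Nirenberg (with $\|w\|_\infty\le C$) to pass from $\|\nabla w\|_{2r}^{2r}$ to $\|D^2 w\|_r^r$, invoke the maximal regularity Lemma~\ref{maximal-regularity-lm} with exponent $\gamma=r$, and finish with the lattice covering plus Young's inequality to reach $Z_{r'}$. The one place the paper is slightly cleaner is the treatment of the cross term $\nabla\psi\cdot\nabla v$: since the maximal regularity estimate already furnishes $\iint_{\Omega_t} e^{-(t-s)}|\nabla(v\psi)|^r$ on the left-hand side, the paper absorbs the contribution $C\kappa^r\iint_{\Omega_t} e^{-(t-s)}|\nabla(v\psi)|^r$ directly there for each fixed center, which avoids your detour through $|\nabla v|^r\le|\nabla v|^{2r}+1$, the sup-quantity $A(t)$, and the second covering step.
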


\begin{proof}
First,  note that
\beq\lb{4.19}
\iint_{\Omega_t}{e^{-(t-s)}} |\nabla v|^{2r}\varphi^2\lesssim \iint_{\Omega_t} { e^{-(t-s)}} |\nabla (v\varphi^{\frac1r})|^{2r}+\kappa^{2r}\iint_{\Omega_t} { e^{-(t-s)}} v^{2r}\varphi^2.
\eeq
Denoting $\varphi_1: = \varphi^\frac1r$, since $v$ is bounded by $\|v_0\|_\infty$ and $\varphi$ is bounded by $1$, we have
\beq\lb{4.20}
\|v \varphi_1\|_{2r}^2\leq \|v\varphi_1\|_\infty  \|v\varphi_1\|_{r}\lesssim \|v\varphi_1\|_r.
\eeq
By the Gagliardo-Nirenberg inequality,
$$
\|\nabla (v \varphi_1)\|_{2r} \lesssim \|v\varphi_1\|_\infty ^{\frac12} \|D^2 (v\varphi_1)\|^{\frac12}_{r}\lesssim \|D^2 (v\varphi_1)\|^{\frac12}_{r}.
$$
By \cite[Theorem 9.11]{gilbarg1998elliptic},
\beq\lb{4.5}
\|\nabla (v \varphi_1)\|_{2r} \lesssim \|D^2 (v\varphi_1)\|^{\frac12}_{r}\lesssim \|\Delta (v\varphi_1)\|^{\frac12}_{r}+\|v\varphi_1\|_r^{\frac{1}{2} }.
\eeq

Next, note that
\begin{align}
(v\varphi_1)_t& =\Delta(v\varphi_1)+\left(-2\nabla v\cdot \nabla\varphi_1 - v\Delta \varphi_1 - uv \varphi_1\right)\nonumber\\
&=\Delta(v\varphi_1)-v\varphi_1 +\left(v\varphi_1-2\nabla v\cdot \nabla\varphi_1 - v\Delta \varphi_1 - uv \varphi_1\right)
\end{align}
 Lemma \ref{maximal-regularity-lm} then  yields 
\begin{align*}
&\int_{0}^t{e^{-(t-s)}}\Big(\|\Delta(v\varphi_1)\|^{r}_{r}+ \|\nabla(v\varphi_1)\|^{r}_{r}+\| v\varphi_1\|^{r}_{r}\Big)\\ &\leq C\int_{0}^t \int_{\R^N} {e^{-(t-s)}} |v\varphi_1 - 2\nabla v\nabla\varphi_1 - v\Delta\varphi_1-u v\varphi_1|^{r} \, dx ds  + C{ t e^{-t} \|v_0\varphi_1\|_{W^{1,r}}^r}\nonumber\\
   &\leq C \|\varphi\|_1+ C{\int_{0}^t \int_{\R^N} e^{-(t-s)}}\big[ \kappa^r |\nabla (v \varphi_1 )|^r + |u \varphi_1|^r \big]dxds,
\end{align*} 
{where $C$ only depends on $r$  and $N$,  and is independent of $T$}.
 Therefore, if $\kappa$ is sufficiently small depending only on $r$ and $N$, by \eqref{phi3}, we get
 \[
\int_{0}^t e^{-(t-s)} \Big(\|\Delta (v\varphi_1)\|^{r}_{r}+\| v\varphi_1\|^{r}_{r}\Big) \lesssim \kappa^{-N}+ \int_{0}^t\int_{\R^N}  e^{-(t-s)} |u \varphi_1|^r,
 \]
which, combining with \eqref{4.19}, \eqref{4.20} and \eqref{4.5}, implies that
\beq\lb{4.3}
\iint_{\Omega_t} e^{-(t-s)} |\nabla v|^{2r}\varphi^2 +\iint_{\Omega_t}e^{-(t-s)} |D^2(v\varphi_1)|^r\lesssim \kappa^{-N}+ \iint_{\Omega_t} 
e^{-(t-s)} u^r \varphi.
\eeq

In the following, we derive a connection between $Z_{r'}(t)$ 
and $\iint_{\Omega_t} e^{-(t-s)} u^r \varphi$. 
Note that   the collection of balls $\{B_{N/\kappa}(z)\,|\, \kappa z\in\Z^N \}$ covers the whole space. Hence, by \eqref{phi2} and  using that $\varphi(x-z)=1$ when $x\in B_{N/\kappa}(z)$ by the definition of $\varphi$,  we have
\begin{align*}
\iint_{\Omega_t} e^{-(t-s)} u^r\varphi&\lesssim \sum_{\kappa z\in \Z^N}\varphi(z)\int_0^t e^{-(t-s)}\int_{B_{{N}/{\kappa}}(z)} u^r(s,x)dxds\nonumber\\
&\lesssim \sum_{\kappa z\in \Z^N}\varphi(z)\int_0^t e^{-(t-s)}\int_{B_{{N}/{\kappa}}(z)} u^r(s,x)\varphi^2(x-z)dxds\nonumber\\
&\lesssim  \sup_{x_0\in\R^N} \iint_{\Omega_t} e^{-(t-s)}u^r(s,x)\varphi^2 (x-x_0)dxds \nonumber\\
&\lesssim  \sup_{x_0\in\R^N}\Big(\delta \iint_{\Omega_t} e^{-(t-s)}u^{r'}(s,x)\varphi^2(x-x_0)dxds+C_{\delta,r,r'}\int_{\R^N} \varphi^2(x-x_0)dx\Big)\nonumber\\
&\lesssim \delta Z_{r'}(t)+C_{\delta,r,r'}\,\kappa^{-N},
\end{align*}
where in the third inequality, we used that 
\[
\sum_{\kappa z\in\Z^N}\varphi(z)\lesssim \kappa^N\int_{\R^N}\varphi(z)dz\lesssim 1
\]
by \eqref{phi3}.
The lemma is thus proved.
\end{proof}

\begin{lem}
\label{v-bound-lm} 
For any $p>N$ and $C_1>0$, if
\begin{equation*}
\sup_{t\in [0,T],x_0\in\R^N}\int_{B(x_0,1)} u^p(t,x)dx\leq C_1,
\end{equation*}
then there is a constant  $C$ depending only on $p$, $N$,  $C_1$, and $ \|v_0\|_{W^{1,\infty}}$ such that 
\begin{equation*}
\sup_{t\in [0,T],x\in\R^N}|\nabla v(t,x)|<C.
\end{equation*}
\end{lem}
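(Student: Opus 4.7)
My plan is to bound $\nabla v$ pointwise via the Duhamel formula for the semigroup $T_p$ generated by $\Delta - I$ (from Section~\ref{ss.2.1}), combined with a cube-by-cube H\"older decomposition tailored to $L^p_{\rm uloc}$ data. Rewriting $v_t = \Delta v - uv$ as $v_t = (\Delta - I)v + v(1-u)$ and applying Duhamel,
\[
\nabla v(t,x) = (T_p(t)\nabla v_0)(x) + \int_0^t \int_{\R^N} e^{-(t-s)}\nabla_x G(t-s, x-y)\, v(s,y)(1 - u(s,y))\, dy\, ds,
\]
using that $\nabla$ commutes with the translation-invariant convolution $T_p$. The first term is bounded by $e^{-t}\|\nabla v_0\|_\infty$ via \eqref{L-infty- Estimates-1}. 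By the maximum principle applied to $v_t = \Delta v - uv$ with $u, v_0 \ge 0$, one has $0 \le v(t,x) \le \|v_0\|_\infty$, so $|v(1-u)| \le \|v_0\|_\infty (1 + u)$.

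To control the Duhamel integral pointwise in $x$, I would decompose $\R^N$ into unit cubes $Q_k = k + [0,1)^N$, $k \in \Z^N$. Since each $Q_k$ is contained in a dimensionally bounded number of unit balls, the hypothesis yields $\|u(s,\cdot)\|_{L^p(Q_k)} \le C_N C_1^{1/p}$ uniformly in $s$ and $k$. H\"older's inequality cube by cube then gives, for any $f$,
\[
\Big|\int_{\R^N} \nabla_x G(\tau, x-y)\, f(y)\, dy\Big| \le \|f\|_{p, {\rm uloc}} \sum_{k \in \Z^N} \|\nabla G(\tau,\cdot)\|_{L^{p'}(Q_k - x)},
\]
with $p' = p/(p-1)$ and $\|f\|_{p, {\rm uloc}} := \sup_{x_0} \|f\|_{L^p(B(x_0,1))}$. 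The key quantity is thus
\[
A(\tau) := \sup_{x \in \R^N} \sum_{k\in\Z^N} \|\nabla G(\tau,\cdot)\|_{L^{p'}(Q_k - x)}.
\]

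Using the explicit form $|\nabla G(\tau, z)| \lesssim \tau^{-(N+1)/2}(|z|/\sqrt\tau)\, e^{-|z|^2/(4\tau)}$, I would split the sum into cubes within and beyond a fixed ball around $x$. For $\tau \le 1$ the near-field cubes contribute the full $L^{p'}$ norm $\|\nabla G(\tau,\cdot)\|_{L^{p'}(\R^N)} = C\tau^{-1/2 - N/(2p)}$ while the far-field is exponentially small; for $\tau \ge 1$ the sum is comparable to $\|\nabla G(\tau,\cdot)\|_{L^1(\R^N)} = C\tau^{-1/2}$ by slow variation of the kernel. Together, $A(\tau) \le C\bigl(\tau^{-1/2 - N/(2p)} + \tau^{-1/2}\bigr)$, and substituting back gives
\[
|\nabla v(t,x)| \le \|\nabla v_0\|_\infty + C(N, \|v_0\|_\infty, C_1) \int_0^\infty e^{-\tau}\bigl(\tau^{-1/2 - N/(2p)} + \tau^{-1/2}\bigr)\, d\tau,
\]
which is finite precisely because $p > N$ forces $\tfrac12 + \tfrac{N}{2p} < 1$, making the singularity at $\tau = 0$ integrable (the tail being absorbed by $e^{-\tau}$). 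The main obstacle is the $A(\tau)$ bound: one must uniformly in $x$ interpolate between the small-$\tau$ regime, where the near-field cubes must deliver the sharp exponent matching the threshold $p > N$, and the large-$\tau$ regime, where slow variation of the kernel permits an $L^1$-type approximation of the sum by an integral.
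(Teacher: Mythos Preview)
Your approach is correct and takes a genuinely different route from the paper's. The paper localizes via the exponentially decaying weight $\varphi=\varphi_\kappa$ from Lemma~\ref{psi-lm}: it writes the equation for $v\varphi$, applies the semigroup estimates \eqref{Lp Estimates-2}--\eqref{Lp Estimates-3} to the Duhamel representation of $v\varphi$, and proceeds in two steps --- first closing a bound on $\sup_t\|(\nabla v)\varphi\|_{L^p}$ by choosing $\kappa$ small enough to absorb the commutator term $\nabla v\cdot\nabla\varphi$, and only then upgrading to $L^\infty$ via the $L^p\to L^\infty$ smoothing. You bypass both the weight and the bootstrap: the pointwise Duhamel bound together with the cube-by-cube H\"older decomposition reduces everything to the single kernel estimate $A(\tau)\lesssim \tau^{-1/2-N/(2p)}+\tau^{-1/2}$, whose integrability against $e^{-\tau}$ exhibits the threshold $p>N$ transparently. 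The paper's method has the advantage of reusing the $\varphi$-machinery already in place throughout Section~\ref{S.3}; yours is more self-contained, avoids the auxiliary $L^p$ step, and isolates the heat-kernel fact $A(\tau)$ as a standalone lemma about uniformly local spaces.
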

The result can be found in \cite[Theorem 3.1, Chapter V]{ladyzhenskaya1968linear}. {For completeness, we provide a more direct proof in the appendix.}

Now, we proceed to prove local $L^{p+1}$ estimate for $u$ in next three subsections. We recall that the constants $c$ and $C$ below might depend on the data ($m,N$, etc.) and $p$, but are independent of $\eps,\kappa$ and $T$.
Multiplying the first equation of \eqref{main-perturbed-eq} by $(u+\eps)^{p}\,\varphi^2$ and integrating over $\R^N$  yields
\begin{align}
\label{new-new-lp-eq1}
&\quad\, \frac{1}{p+1}\frac{d}{dt}\int_{\R^N} (u+\eps)^{p+1}\varphi^2  dx\nonumber \\
&= \int_{\R^N} (u+\eps)^{p}\varphi^2\nabla\cdot\left[m(u+\eps)^{m-1}\nabla u - \chi u\nabla v\right] + (u+\eps)^{p}\varphi^2(au - bu^2)\nonumber \\
&\leq  -c\int_{\R^N} (u+\eps)^{p+m-2}|\nabla u|^2\varphi^2 +C\int_{\R^N}(u+\eps)^{p+m-1}|\nabla u||\nabla\varphi^2| \nonumber\\
 &\quad+ C\int_{\R^N} (u+\eps)^{p-1}u |\nabla u||\nabla v|\,\varphi^2+ C\int_{\R^N} (u+\eps)^{p}u |\nabla v||\nabla \varphi^2|+\int_{\R^N}(u+\eps)^{p}(au - bu^2)\varphi^2\nonumber\\
&\leq \underbrace{-c\int_{\R^N}  (u+\eps)^{m+p-2}|\nabla u|^2\varphi^2}_{-I_1} +\underbrace{C\kappa\int_{\R^N}(u+\eps)^{p+m-1}|\nabla u|\varphi^2}_{\kappa I_2}\nonumber \\
 &\quad+\underbrace{ C\int_{\R^N} (u+\eps)^{p} |\nabla u||\nabla v|\,\varphi^2}_{I_3}+ \underbrace{C\kappa\int_{\R^N} (u+\eps)^{p+1}|\nabla v| \varphi^2}_{\kappa I_4} +\underbrace{ \int_{\R^N}  (u+\eps)^p(au-b u^{2})\varphi^2}_{I_5},
\end{align}
where, in the second inequality, we used that $|\nabla \varphi|\leq\kappa\varphi$.

To prove Theorem \ref{apriori-boundedness-thm}(1),  it is essential to provide proper estimates for $I_1$, $I_2$, $I_3$,  $I_4$, and $I_5$. We point out that $I_1$ and $I_2$ depend on $m$, and we will estimate them  differently for the case
$1<m\le 2$ and $m>2$. In the following, we will first  estimate $I_3$, $I_4$ and $I_5$ in Subsection \ref{I3-I4-I5-subsec}. 
And then we estimate $I_2$ and prove Theorem  \ref{apriori-boundedness-thm}(1)  for the cases $1<m\le 2$ and $m>2$ in Subsections \ref{m-small-subsec} and
\ref{m-large-subsec}, respectively.

\subsection{Estimates for $I_3$, $I_4$ and $I_5$}
\label{I3-I4-I5-subsec}
First, we  estimate {$I_3(t)$}. For simplicity of notations, let us write
\[
u_+:=u+\eps\geq\eps.
\]
Since $\nabla u=\nabla u_+$, by Young's inequality, for any $0<\delta<1$, 
\begin{align}
\label{I3-eq1}
I_3(t)&\le C \int_{\R^N} u_+^p |\nabla u_+| (|\nabla v|+1)\varphi^2 dx\quad \nonumber\\
&\leq \delta \int_{\R^N} u_+^{p+m-2}|\nabla u_+|^2\varphi^2 +C\delta^{-1}\int_{\R^N}
u_+^{p+2-m}(|\nabla v|^2+1)\varphi^2\nonumber\\
&\leq \delta\int_{\R^N} u_+^{p+m-2}|\nabla u_+|^2\varphi^2 +\delta\int_{\R^N}u_+^{p+2}\varphi^2+C_\delta\int_{\R^N} |\nabla v|^{\frac{2(p+2)}{m}}\varphi^2+C_\delta\kappa^{-N},
\end{align}
where we used that $p+2-m> 0$ by \eqref{m-eq3}.

Next, we consider $\kappa I_4(t)$. By Young's inequality,
\begin{align}
\label{I4-eq1}
\kappa I_4(t)&\lesssim \kappa \int_{\R^N} u_+^{p+1}|\nabla v|\varphi^2\lesssim \kappa \int_{\R^N} u_+^{p+2}\varphi^2 +\kappa\int_{\R^N} |\nabla v|^{p+2}\varphi^2.
\end{align}

Finally, since $\eps\in (0,1)$, there exists $C>0$ depending only on $a,b$ and $p$,
\[
au-bu\leq (a+2b\eps)(u+\eps)-b(u+\eps)^2\leq -\frac{1}{p+1}u_+-Cu_+^2+C.
\]
By Lemma \ref{psi-lm}, it  is  clear that
\begin{equation}
\label{I5-eq1}
I_5\le -\frac{1}{p+1}\int_{\R^N} u_+^{p+1}\varphi^2dx -c\int_{\R^N} u_+^{p+2}\varphi^2 dx+C\kappa^{-N}.
\end{equation}
Here the term $\frac{1}{p+1}\int_{\R^N} u_+^{p+1}\varphi^2dx$ on the right-hand side will be used to have uniform-in-time bounds.

\subsection{Proof of Theorem  \ref{apriori-boundedness-thm}(1)  for the case $1<m\le 2$}
\label{m-small-subsec}

In this subsection, we consider the case of $1<m\le 2$,  and prove Theorem \ref{apriori-boundedness-thm}(1). This case is much simpler than the case when $m>2$, which is because for $m\leq 2$, the logistic term is strong enough to bound the integral of $u^{p+m}$. 

\begin{proof}[Proof of Theorem \ref{apriori-boundedness-thm}(1)  for the case $1<m\le 2$]
 Let $p \geq m$. We begin by estimating $\kappa I_2$. Since
\begin{equation*}
p+m\le p+2\quad {\rm for}\quad 1<m\le 2,
\end{equation*}
we have
$$
u_+^{p+m}\varphi^2\lesssim u_+^{p+2}\varphi^2+\varphi^2.
$$
By Young's inequality, we have for any $\delta\in (0,1)$ and  $0<\kappa<\delta$,
\begin{align}
\label{I2-eq1}
\kappa I_2&= C \kappa\int_{\R^N}u_+^{p+m-1}|\nabla u_+|\varphi^2
\lesssim {\delta}\int_{\R^N} u_+^{p+m-2}|\nabla u_+|^2 \varphi^2 + \delta^{-1}\kappa^2\int_{\R^N}u_+^{p+m}\varphi^2\nonumber\\
&\lesssim {\delta}\int_{\R^N} u_+^{p+m-2} |\nabla u_+|^2 \varphi^2dx + \delta \int_{\R^N}u_+^{p+2}\varphi^2 dx+C_\delta\kappa^{-N},
\end{align}
where in the last inequality, we also used Lemma \ref{psi-lm}.

After taking $\delta$  to be sufficiently small (then $\kappa<\delta$ is also small),  by \eqref{new-new-lp-eq1},  \eqref{I3-eq1}, \eqref{I4-eq1}, \eqref{I5-eq1} and \eqref{I2-eq1}, we have
\begin{align*}
\frac{d}{dt}\int_{\R^N} u_+^{p+1}\varphi^2  dx
&\le -c \int_{\R^N} u_+^{p+m-2}|\nabla u_+|^2\varphi^2 -c\int_{\R^N} u_+^{p+2}\varphi^2 -\int_{\R^N} u_+^{p+1}\varphi^2 \nonumber\\
&\quad +  C_\delta\int_{\R^N} |\nabla v|^{\frac{2(p+2)}{m}}\varphi^2+  \kappa\int_{\R^N} |\nabla v|^{p+2}\varphi^2+C_\delta \kappa^{-N}.
\end{align*}
Multiplying $e^{t}$ to both sides and integrating in time, this implies that
\begin{align}
\label{aux-lp-eq2}
&\int_{\R^N}u_+^{p+1}(t,x)\varphi^2(x)dx+c\iint_{\Omega_{t}} e^{-(t-s)}u_+^{p+m-2}|\nabla u_+|^2\varphi^2dxds+c\iint_{\Omega_{t}}e^{-(t-s)}u_+^{p+2}\varphi^2dxds\nonumber\\
&\le \int_{\R^N} u_+^{p+1}(0,x)\varphi^2dx +C_\delta \iint_{\Omega_{t}} e^{-(t-s)}|\nabla v|^{\frac{2(p+2)}{m}}\varphi^2
+\kappa\iint_{\Omega_{t}} e^{-(t-s)}|\nabla v|^{p+2}\varphi^2 +C_\delta \kappa^{-N},
\end{align}
where $C_\delta$ is independent of $t$.

Next, recall that
\[
Z_{p+2}(t)=\sup_{x_0\in\R^N} \iint_{\Omega_{t}}e^{-(t-s)} u_+^{p+2}(s,x) \varphi^2(x-x_0)dxds.
\]
 Applying Lemma \ref{I3-lm} 
with $r=\frac{p+2}{m}$ and $r'=p+2>r$ (for $m>1$)  yields 
\begin{align}
\label{I3-eq1-2}
 C_\delta \iint_{\Omega_{t}}e^{-(t-s)} |\nabla v|^{\frac{2(p+2)}{m}}\varphi^2\leq\delta  Z_{p+2} (t)+C_\delta'\kappa^{-N}.
\end{align}
Applying Lemma \ref{I3-lm} 
with $r=\frac{p+2}{2}$ and $r'=p+2$ yields 
\begin{equation}
\label{I4-eq1-2}
\kappa\iint_{\Omega_{t}} e^{-(t-s)} |\nabla v|^{p+2}\varphi^ 2\le \kappa \delta  Z_{{p+2}}(t)+C_\delta\,\kappa^{1-N}.
\end{equation}
By \eqref{aux-lp-eq2}, \eqref{I3-eq1-2}, and \eqref{I4-eq1-2}, we have
\begin{align*}
&\int_{\R^N} u_+^{p+1}(t,x)\varphi^2(x)dx+c\iint_{\Omega_{t}}e^{-(t-s)} u_+^{p+m-2} |\nabla u_+|^2\varphi^2 dxds
+c\iint_{\Omega_{t}} e^{-(t-s)}u_+^{p+2}\varphi^2dxds\nonumber\\
&  \leq  \int_{\R^N} u_+^{p+1}(0,x)\varphi^2(x) dx+2\delta  Z_{p+2}(t)+C_\delta\,\kappa^{-N}.\nonumber
\end{align*}
Since $u_+(0,x)=u_0(x)+\eps$ ($\eps\in(0,1)$) is uniformly bounded, by Lemma \ref{psi-lm},
\[
\int_{\R^N} u_+^{p+1}(0,x)\varphi^2(x) dx\le C \kappa^{-N}.
\]
Then we fix $\delta:=c/4$ to get
\begin{align}
\label{estimate-case1-eq1}
&\int_{\R^N} u_+^{p+1}(t,x)\varphi^2(x)dx+c\iint_{\Omega_{t}}e^{-(t-s)} u_+^{p+2}\varphi^2dxds\nonumber\\
&\quad +c\iint_{\Omega_{t}} e^{-(t-s)}  u_+^{p+m-2} |\nabla u_+|^2\varphi^2 dxds
\le C  \kappa^{-N}+2^{-1}
cZ_{p+2}(t).
\end{align}

Now, by shifting the space variable, we can replace $\varphi(x)$ by $\varphi(x-x_0)$ in \eqref{estimate-case1-eq1}, and  obtain
\begin{align*}
&\int_{\R^N} u_+^{p+1}(t,x)\varphi^2(x-x_0)+c\iint_{\Omega_{t}} e^{-(t-s)}u_+^{p+2}\varphi^2(x-x_0)\\
&\quad +c\iint_{\Omega_{t}}e^{-(t-s)}  u_+^{p+m-2} |\nabla u_+|^2\varphi^2(x-x_0)\le C \kappa^{-N} +2^{-1}cZ_{p+2}(t)
\end{align*}
After taking supremum over $x_0$, this implies that
\begin{align}\label{nablaueps1}
&\sup_{x_0\in\R^N} \int_{\R^N} u_+^{p+1}(t,x)\varphi^2(x-x_0)+cZ_{p+2}(t)\nonumber\\
&\quad +c\sup_{x_0\in\R^N}\iint_{\Omega_{t}}e^{-(t-s)}  u_+^{p+m-2}|\nabla u_+|^2\varphi^2(x-x_0)
\leq C\kappa^{-N},
\end{align}
where $0<\kappa<\frac{c}{4}$, and the constants $c,C$ are independent of $\kappa$ and $t$.
Hence, recalling $u_+=u+\eps$, we get for all $p\geq m$,
$$
u(t,\cdot)\in L_{\rm loc}^{p+1}(\R^N), \quad e^{-T/2} (\eps+u)^{\frac{p+m-2}{2}}\nabla u\in L^2_{\rm loc}(\Omega_T),
$$
and 
$$
\sup_{t\in [0,T], x_0\in\R^N} \int_{\R^N} u^{p+2}(t,x)\varphi^2(x-x_0) \lesssim \kappa^{-N}.
$$
The bounds only depend only on $m,|\chi|,a,b,N,p$, $\|u_0\|_\infty$,  $\|v_0\|_{W^{1,\infty}}$  and the diameter of the local spatial domain (but independent of $T$). 

Finally, since $p\geq m$ can be arbitrary, picking $p=N+1\geq 2\geq m$ yields that $u(t,\cdot)$ is locally uniformly finite in $L^{N+2}(\R^N)$.
Then Lemma \ref{v-bound-lm}
 implies that
$$
\sup_{t\in [0,T],x\in\R^N}|\nabla v(t,x)|\lesssim \kappa^{-N}.
$$
This together with the fact that $\|v\|_\infty <\infty$  implies that $v \in L^\infty(\Omega_T)\cap  W^{1,\infty}(\Omega_T)$ with a bound independent of $T$.


\end{proof}

\subsection{Proof of Theorem \ref{apriori-boundedness-thm}(1)  for the case $m>2$}\lb{S.3.3}

In this subsection, we consider the case when $m>2$. Note that   the estimate \eqref{I2-eq1} for $\kappa I_2$  only holds when $1<m\le 2$, and when $m>2$,  we are not able to bound this term by the logistic term and the diffusion term. Therefore,
we will estimate both $I_1$ and $\kappa I_2$ differently, and a continuity argument will play an important role at the end.

We remark that the argument below actually works for the case of $m\in (1,2]$ as well. We treated the case $m\leq 2$ separately in the previous subsection as it allows a simpler proof.

\begin{proof}[Proof of Theorem \ref{apriori-boundedness-thm}(1)  for the case $m>2$]
\label{m-large-subsec}
Let $p\geq m$ and recall that $u_+=u+\eps$. 
We start with the following notations: 
For any given $x_0\in\R^N$ and $r>1$, define
\begin{equation*}
X_{ r,x_0}(t)=\int_{\R^N} u_+^r(t, x) \varphi^2(x-x_0)dx,\quad X_r(t)=\sup_{x_0\in\R^N} X_{r,x_0}(t),
\end{equation*}
\begin{equation}
\label{Y-eq}
\text{and}\quad Y_{r}(t):=\sup_{s\in[0,t]}X_{r}(s).
\end{equation}

The proof is divided into two cases.

\medskip

\noindent {\bf Case 1.} 
We first assume $N\geq 3$. We will make a better use of the term $I_1$ as follows:
\begin{align}\lb{I1-eq}
I_1\gtrsim \int_{\R^N} |\nabla (u_+^{\frac{p+m}{2}})|^2\varphi^2
\geq  c\int_{\R^N} |\nabla (u_+^{\frac{p+m}{2}}\varphi)|^2
-C\kappa^2 \int_{\R^N} u_+^{p+m}\varphi^2.
\end{align}
Applying Lemma \ref{I1-lm} with $r$ being replaced by $\frac{p+m}{2}$ (here we need $N\geq3$), we have
\begin{align}
\lb{new-poin'}
\kappa^2\int_{\R^N}u_+^{p+m}\varphi^2
&\leq  \delta\|\nabla(u_+^{\frac{p+m}{2}}\varphi)\|_{2}^2+C_{\delta}\kappa^{2+{\theta^*}}\left[\int_{\R^N}u_+^{\frac{p+m}{2}+1}\varphi^{q^*}dx\right]^{2/q^*}, 
\end{align}
where
\beq\lb{4.6}
q^*:=\frac{p+m+2}{p+m}\in (1, 2),
\quad{and}\quad
{\theta^*}:=\frac{N(p+m-2)}{p+m+2}>0.
\eeq
Similarly as done before, using the properties of $\varphi$ from Lemma \ref{psi-lm}, we have
\begin{align*}
\int_{\R^N}u_+^{\frac{p+m}{2}+1}(s,x)\varphi^{q^*}(x)dx&\lesssim \sum_{\kappa z\in\Z^N}\varphi^{q^*}(z)\int_{B_{N/\kappa}(z)}u_+^{\frac{p+m}{2}+1}(s,x)\varphi^2(x-z)dx\\
&\lesssim \sup_{x_0\in\R^N} \int_{\R^N}u_+^{\frac{p+m}{2}+1}(s,x)\varphi^2(x-x_0) dx.
\end{align*}
Since $
\frac{p+m}{2}+1\le p+1$  by \eqref{m-eq3}, $u_+^{\frac{p+m}{2}+1}\leq u_+^{p+1}+1$. By Lemma \ref{psi-lm} again,
\begin{align*}
\int_{\R^N}u_+^{\frac{p+m}{2}+1}(s,x)\varphi^{q^*}(x)dx&\lesssim \sup_{x_0\in\R^N} \int_{\R^N}u_+^{p+1}(s,x)\varphi^2(x-x_0) dx+\int_{\R^N}\varphi^2(x-x_0)dx\\
&\lesssim X_{p+1}(s)+\kappa^{-N}.
\end{align*}
It follows from \eqref{new-poin'} that
\beq
\lb{new-poin}
\kappa^2\int_{\R^N}u_+^{p+m}\varphi^2
\leq  \delta\|\nabla(u_+^{\frac{p+m}{2}}\varphi)\|_{2}^2+C_{\delta}\kappa^{2+{\theta^*}}X_{p+1}(s)^{2/q^*}+C\kappa^{2+{\theta^*}-2N/q^*}.
\eeq
After taking this $\delta>0$ to be small,
\eqref{I1-eq} and \eqref{new-poin} yield
\begin{equation}
\label{I1-eq2}
\frac12 I_1\geq c\|\nabla(u_+^{\frac{p+m}{2}}\varphi)\|_{2}^2-C\kappa^{2+{\theta^*}}X_{p+1}(s)^{2/q^*}-C\kappa^{2+{\theta^*}-2N/q^*}.
\end{equation}

Next, we bound $I_2$.
By Young's inequality  and   \eqref{new-poin}, we have for any $\delta\in (0,1)$, 
\begin{align}
\label{I2-eq2}
\kappa I_2
&\leq {\delta}  \int_{\R^N} u_+^{p+m-2}|\nabla u_+|^2 \varphi^2 + C\delta^{-1}\kappa^2 \int_{\R^N}u_+^{p+m}\varphi^2\nonumber\\
&\leq   {\delta}  \int_{\R^N}  u_+^{p+m-2} |\nabla u_+|^2 \varphi^2\,dx  +\delta\|\nabla(u_+^{\frac{p+m}{2}}\varphi)\|_{2}^2 \nonumber\\
&\quad +  C_{\delta}\kappa^{2+{\theta^*}}X_{p+1}(s)^{2/q^*}+C_{\delta}\kappa^{2+{\theta^*}-2N/q^*}.
\end{align}

By   \eqref{new-new-lp-eq1}, \eqref{I3-eq1}, \eqref{I4-eq1},  \eqref{I5-eq1},
 \eqref{I1-eq2}  and \eqref{I2-eq2},
we have for any $0<\delta <1$ sufficiently small depending only on $C$,  any $t\in [0,T]$, and  any  $0<\kappa < \delta$, there holds
\begin{align}
\label{aux-lp-eq3}
\frac{d}{dt}\int_{\R^N} u_+^{p+1}\varphi^2  dx
&\le -c \int_{\R^N}  u_+^{p+m-2} |\nabla u_+|^2\varphi^2 -c\int_{\R^N} u_+^{p+2}\varphi^2 -\int_{\R^N} u_+^{p+1}\varphi^2 \nonumber\\
&\quad +  C_\delta\int_{\R^N} |\nabla v|^{\frac{2(p+2)}{m}}\varphi^2+  \kappa\int_{\R^N} |\nabla v|^{p+2}\varphi^2+C_\delta \kappa^{-N}\nonumber\\
&\quad +  C_{\delta}\kappa^{2+{\theta^*}}X_{p+1}(s)^{2/q^*}+C_{\delta}\kappa^{2+{\theta^*}-2N/q^*}.
\end{align}
This together with \eqref{I3-eq1-2} and \eqref{I4-eq1-2}  implies that
\begin{align*}
&\int_{\R^N}u_+^{p+1}(t,x)\varphi^2(x)dx+c\iint_{\Omega_{t}} e^{-(t-s)} u_+^{p+m-2} |\nabla u_+|^2\varphi^2dxds+c\iint_{\Omega_{t}}e^{-(t-s)}u_+^{p+2}\varphi^2dxds\nonumber\\
&\le \int_{\R^N} u_+^{p+1}(0,x)\varphi^2dx +\delta  Z_{p+2} (t)+C_\delta \kappa^{-N}+ C_{\delta}\kappa^{2+{\theta^*}}Y_{p+1}(t)^{2/q^*}+C_{\delta}\kappa^{2+{\theta^*}-2N/q^*},
\end{align*}
where the constants $c,C_\delta$ are independent of $\kappa$ and $t$.
Using that $u_+(0,\cdot)=u_0(x)+\eps$  is uniformly bounded, Lemma \ref{psi-lm}, and fixing $\delta>0$ to be sufficiently small, we get
\begin{align*}
&\int_{\R^N} u_+^{p+1}(t,x)\varphi^2(x)dx+\iint_{\Omega_t} e^{-(t-s)} u_+^{p+m-2} |\nabla u_+|^2\varphi^2 dxds
+\iint_{\Omega_t} e^{-(t-s)}u_+^{p+2}\varphi^2dxds\nonumber\\
& \lesssim \delta Z_{p+2}(t) +\kappa^{2+{\theta^*}}Y_{p+1}(t)^{2/q^*}+\kappa^{2+{\theta^*}-2N/q^*}+\kappa^{-N}.
\end{align*}
Hence, for $x_0=0$,  since $Y_{p+1}(\cdot)$ is non-decreasing
\begin{align*}
&X_{p+1,x_0}(t)+\iint_{\Omega_t}e^{-(t-s)}   u_+^{p+m-2}|\nabla u_+|^2\varphi^2(x-x_0)dxds+ Z_{p+2,x_0}(t)\\
&\lesssim  \delta Z_{p+2}(t)+ \kappa^{2+{\theta^*}}Y_{p+1}(t_0)^{2/q^*}+\kappa^{2+{\theta^*}-2N/q^*}+\kappa^{-N}
\end{align*}
for $t_0\in (0,T]$ and $t\in [0,t_0]$.
After shifting in space, the same estimate holds for general $x_0\in\R^N$.  Taking supremum in $x_0$ and $t\in [0,t_0]$ for any $t_0\in [0,T]$, we get
\begin{align*}
&Y_{p+1}(t_0)+Z_{p+2}(t_0)+ \sup_{t\in [0,t_0], x_0\in\R^N} \iint_{\Omega_{t}} e^{-(t-s)}  u_+^{p+m-2} |\nabla u_+|^2\varphi^2(x-x_0)dxds\\
&\lesssim   \kappa^{2+{\theta^*}}Y_{p+1}(t_0)^{2/q^*}+\kappa^{2+{\theta^*}-2N/q^*}+\kappa^{-N}.
\end{align*}

Recall \eqref{4.6} and then
$$
2/q^*=\frac{2(p+m)}{p+m+2}\quad\text{and}\quad {2+{\theta^*}}{-2N/q^*}={2+\frac{N(p+m-2)}{p+m+2}- \frac{2N(p+m)}{p+m+2}}={2-N}.
$$
Therefore, there exists $C_0>0$ independent of $t_0$ such that for all $\kappa$ sufficiently small, 
\begin{align}
\label{estimate-case2-eq3}
&Y_{p+1}({t_0})
+\sup_{x_0\in\R^N} \iint_{\Omega_{t_0}} e^{-(t_0-s)}  u_+^{p+m-2} |\nabla u_+|^2\varphi^2(x-x_0)dxds\nonumber\\
&\leq C_0\kappa^{2+\frac{N(p+m-2)}{p+m+2}} \left[Y_{p+1}(t_0)\right]^{\frac{2(p+m)}{p+m+2}}+C_0\kappa^{-N}.
\end{align}

By further taking $C_0$ to be large enough if necessary, we can assume $Y_{p+1}(0)=X_{p+1}(0)\leq C_0$.
We claim that, if  $0<\kappa< 1$ is sufficiently small, 
\begin{equation}
\label{new-claim-eq}
Y_{p+1}(T)\le 2C_0 \kappa^{-N}.
\end{equation}
In fact,
 assume for contradiction that  there is $t_0\in [0,T]$ such that
$$
Y_{p+1}(t_0)=2C_0  \kappa^{-N}.
$$
By \eqref{estimate-case2-eq3},
\begin{align*}
2C_0\kappa^{-N}&\le C_0 \kappa^{2+\frac{N(p+m-2)}{p+m+2}} (2 C_0\kappa^{-N}) ^{\frac{2(p+m)}{p+m+2}}+ C_0\kappa^{-N}\\
&=C_0(2C_0)^{\frac{2(p+m)}{p+m+2}}\kappa^{2-N}+C_0\kappa^{-N}.
\end{align*}
This implies that
$$
1\le (2C_0)^{\frac{2(p+m)}{p+m+2}}\kappa^{2},
$$
which is impossible if $\kappa$ was chosen to be $\frac12 (2C_0)^{-\frac{p+m}{p+m+2}}$. Therefore, the claim \eqref{new-claim-eq}  holds.

By \eqref{estimate-case2-eq3}, we also
have
\begin{equation}\label{nablaueps2}
    \sup_{x_0\in\R^N} \iint_{\Omega_{T}}  e^{-(T-s)}  u_+^{p+m-2} |\nabla u_+|^2\varphi^2(x-x_0)dxds\le C_0(2C_0)^{\frac{2(p+m)}{p+m+2}}\kappa^{2-N}+C_0 \kappa^{-N}.
\end{equation}

The results yield that for any $p\geq m$, $u(t,\cdot)$ is locally uniformly finite in $L^{p+1}(\R^N)$ space for each $t\in [0,T]$, and  
$ e^{-T/2} (u+\eps)^{\frac{p+m-2}{2}}\nabla u$ is locally uniformly bounded in $L^{2}(\Omega_T)$.
Furthermore, after picking $p=\max\{m,N\}$, we obtain locally uniform boundedness of $u(t,\cdot)$ in $L^{N+1}(\bbR^N)$. By Lemma \ref{v-bound-lm}, $\nabla v$ is  uniformly finite in $L^{\infty}(\Omega_T)$.
Moreover,     the bounds of $u(t,\cdot)\in L_{\rm loc}^{p+1}(\R^N)$
and $v\in L^\infty(\Omega_T)\cap W^{1,\infty}(\Omega_T)$ are independent of $T$. This 
proves  Theorem \ref{apriori-boundedness-thm}(1)  for the case when $m>2$ and $N\ge 3$.

\medskip

\noindent{\bf Case 2.} $N=1,2$. 
 For any $t\ge 0$ and $x\in\R^N$, let $\tilde x=(x,x_{N+1}, x_{N+2})\in\R^{N+2}$
and 
\[
\tilde u(t,\tilde x)=u(t,x),\quad \tilde v(t,\tilde x)=v(t,x).
\]
Then $(\tilde u(t,\tilde x),\tilde v(t,\tilde x))$ is a solution of \eqref{main-perturbed-eq}
with $N$ being replaced by $N+2$ with extended initial data $(\tilde u_0,\tilde v_0)$.
The conclusion for the case $N=1,2$  thus follows from the case $N\geq 3$.
\end{proof}


\begin{rk}\lb{R.3.1}
Suppose that $\nabla v$ is a given $L^\infty$ vector field, and $u$ solves 
\[
u_t = m\nabla\cdot \big((\eps+u)^{m-1}\nabla u\big) - \chi \nabla \cdot (u \nabla v) + u(a - b u)\qquad\text{in $[0,T]\times\R^N$,}
\]
with bounded initial data. Then for any $p\geq m$, $u$ is locally uniformly finite in $L^p$ with a bound independent of $\eps$ and $T$, and $e^{-T/2} (\eps+u)^{\frac{p+m-2}{2}}\nabla u$ is locally uniformly bounded in $L^2(\Omega_T)$. 
Moreover, the bounds depend only on  
 $m,|\chi|,a,b,N,p$, $\|u_0\|_\infty$,  $\|\nabla v\|_{L^{\infty}}$  and the diameter of the local spatial domain.

The proof is the almost identical to the one of Theorem \ref{apriori-boundedness-thm}(1) and it is actually simpler. The only difference is that, in this case, we can replace \eqref{gradient-v-eq} by
\[
\iint_{\Omega_t} |\nabla v|^{2r}\varphi^2\leq C_r\,\kappa^{-N}
\]
with $C_r$ depending on $r$  and $\|\nabla v\|_{L^\infty}$.
\end{rk}

\section{$L^\infty$ a priori estimate for the perturbed problem and proof of  Theorem \ref{apriori-boundedness-thm}(2)}
In this section, we provide $L^\infty$ a priori estimate (independent of $\eps\in(0,1)$) and $T$  for solutions $(u,v)$ of \eqref{main-perturbed-eq} and prove Theorem \ref{apriori-boundedness-thm}(2). 
It follows from the last part of the proof of Theorem \ref{apriori-boundedness-thm} (1) that it suffices to consider the case where $N\geq 3$.
By Theorem \ref{apriori-boundedness-thm}(1), $u(t,\cdot)$ is locally uniformly finite in $L^p$ for all times $t$, though with a bound depending on $p$. 
We will apply Moser’s iteration method to upgrade the estimate to an $L^\infty$ bound for all times.

In the following, we assume $N\geq 3$, and  fix $T>0$ and 
\[
p_0 :=  \max\{N+1,m+1\}.
\]
By Theorem \ref{apriori-boundedness-thm}(1), there exists $K_0$ depending on $p_0$ 
(independent of $T$ and $\eps$) such that 
\begin{equation}
\label{u-p0-eq1}
 \sup_{t\in [0,T],x_0\in\R^N }  \|u(t,\cdot) \|_{L^{p_0}(B_1(x_0))} \le K_0.
\end{equation}
Since $p_0> N$, by Lemma \ref{v-bound-lm}, there exists $K_1$ depending only on  $p_0$, $N$, $\|v_0\|_{W^{1,\infty}}$  and $K_0$ such that
 \begin{equation}
\label{v-infinity-eq1}
    \|\nabla v(t, \cdot)\|_{\infty} \le K_1\quad \forall\, 0\le t\le T.
\end{equation}

Let $\varphi$ be from Lemma \ref{psi-lm} with a fixed parameter $\kappa\in (0,1)$.
We  multiply first equation by $u^{p}\varphi^2 $  and integrate it over  $\R^N$ with $p\geq m$. Applying Young's inequality, we get
\begin{align}
\label{lp-eq1}
&\frac{1}{p+1}\frac{d}{dt}\int_{\R^N} u^{p+1}\varphi^2  dx + \frac{1}{p+1}\int_{\R^N} u^{p+1}\varphi^2  dx\nonumber\\
&= \int_{\R^N} u^{p}\varphi^2 \nabla\cdot[m(u+\eps)^{m-1}\nabla u) - \chi u\nabla v] + (a+ \frac{1}{p+1})u^{p+1}\varphi^2  - bu^{p+2}\varphi^2 \nonumber\\
&= -m p \int_{\R^N} u^{p-1}\varphi^2  (u+\eps)^{m-1}|\nabla u|^2  \underbrace{ - m\int_{\R^N}u^{p}(u+\eps)^{m-1}\nabla u\cdot\nabla\varphi^2 }_{J_1}  \nonumber\\
&\quad +\underbrace{ \chi p\int_{\R^N} u^{p}\varphi^2 \nabla v\cdot\nabla u}_{J_2}+\underbrace{ \chi\int_{\R^N} u^{p+1} \nabla v\cdot\nabla \varphi^2  + \int_{\R^N} (a+ \frac{1}{p+1})u^{p+1}\varphi^2 - bu^{p+2}\varphi^2}_{J_3}.
\end{align}
Unlike in the previous Section \ref{S.3}, throughout this section, the general constants $c,C,C'\geq 1$ only depend on $m,\chi,a,b$, $N,K_0,K_1$,    $\|v_0\|_{W^{1,\infty}}$ and $\|u_0\|_{L^{\infty}}$, and they are likely to be different from one line to another. Let us emphasize that these general constants can depend on $p_0$ in this section, but they are absolutely independent of  $\eps, \kappa$, $T$, and $p$!

\subsection{Estimates for $J_1$, $J_2$ and $J_3$}

In this subsection,   we provide fine estimates for $J_1$, $J_2$ and $J_3$.

First, using that $|\nabla\varphi|\lesssim\kappa\varphi$, direct computation yields
\begin{align*}
J_1&\le  \frac{m p}{4}\int_{\R^N}  u^{p-1}\varphi^2  (u+\eps)^{m-1}|\nabla u|^2  + \frac{4 \k^2m}{p}\int_{\R^N} u^{p+1}(u+\eps)^{m-1}\varphi^2\nonumber\\
&\le \frac{m p}{4}\int_{\R^N}  u^{p-1}\varphi^2  (u+\eps)^{m-1}|\nabla u|^2  +  \frac{C \k^2}{p}\int_{\R^N} (u^{p+m}+1)\varphi^2,
\end{align*}
where in the second inequality we used that
\[
u^{p+1}(u+\eps)^{m-1}\leq 2^{m-1}u^{p+m}+2^{m-1}u^{p+1}\leq 2^m u^{p+m}+2^{m-1},
\]
and $C$ is independent of $\eps, \kappa$ and $p$. Then using \eqref{phi1} and \eqref{phi3} from Lemma \ref{psi-lm} yields
\begin{align}
\label{J1-eq}
J_1
&\le \frac{m p}{4}\int_{\R^N}  u^{p-1}\varphi^2  (u+\eps)^{m-1}|\nabla u|^2dx  +  \frac{C\k^2}{p}\int_{\R^N} u^{p+m}\varphi^2dx+   \frac{C \k^{2-N}}{p}\nonumber\\
&\le \frac{m p}{4}\int_{\R^N}  u^{p+m-2}|\nabla u|^2\varphi^2 dx  +  \frac{C\k^2}{p}\int_{\R^N} u^{p+m}\varphi^2dx+   \frac{C \k^{2-N}}{p}.
\end{align}

For $J_2$, since $|\nabla v|\leq C$ and $p\geq m-2$, we have
\begin{align}
\label{J2-eq}
J_2&\le  \frac{m p}{4}\int_{\R^N} u^{p+m-2}|\nabla u|^2\varphi^2  + {C p  }\int_{\R^N} u^{p+2-m} \varphi^2\nonumber\\
&\le   \frac{m p}{4}\int_{\R^N} u^{p+m-2}|\nabla u|^2\varphi^2dx  + Cp\int_{\R^N} u^{p+m} \varphi^2dx+ Cp \kappa^{-N},
\end{align}
where we used \eqref{phi1} and \eqref{phi3} again. Since $|\nabla v|\leq C$, $ m>1$ and $\kappa<1$,
\begin{align}
\label{J3-eq}
J_3&\le  C \int_{\R^N}  u^{p+1}\varphi^2-bu^{p+2}\varphi^2dx\le  C \int_{\R^N}  u^{p+m}\varphi^2dx+ C\kappa^{-N}.
\end{align}
Let us comment that here we did not use the assumption that $b>0$.
By \eqref{J1-eq} -- \eqref{J3-eq}, we have
\begin{equation}
\label{J1-J4-eq1}
J_1+J_2+J_3\le \frac{mp}{2}\int_{\R^N}  u^{p+m-2} |\nabla u|^2\varphi^2dx +C_1p\int_{\R^N}u^{p+m}\varphi^2dx +Cp\kappa^{-N},
\end{equation}
where $C_1$ and $C$ are general constants independent of $p$ and $\kappa$. 

Next,  we provide fine estimates for $\int_{\R^N}u^{p+m}\varphi^2$. 
Let
$$
\theta:=\frac{N(p+m-2)}{2(p+m+2)}\quad \text{and}\quad q:=\frac{p+m+2}{p+m}. 
$$
It follows from Lemma \ref{I1-lm} with $r=\frac{p+m}{2}$ that there is a dimensional constant $C>0$ such that
\begin{align*}
 \int_{\R^N} u^{p+m}\varphi^2
&\le \delta\left\|\nabla(u^\frac{p+m}{2}\varphi)\right\|_{2}^2+C\delta^{-\theta}\left[\int_{\R^N}u^\frac{p+m+2}2\varphi^{q}dx\right]^{2/q}, 
\end{align*}
Using the properties of $\varphi$, direct computation yields
\[
\left\|\nabla(u^\frac{p+m}{2}\varphi)\right\|_{2}^2\leq Cp^2 \int_{\R^N} u^{p+m-2}|\nabla u|^2 \varphi^2+C\kappa^2\int_{R^N} u ^{{p+m}}\varphi^2.
\]
 We pick $\delta:=c/p^2$ and $\kappa:=c$ with $c$ sufficiently small depending on $C_1$ from \eqref{J1-J4-eq1} to get
\begin{align*}
C_1p \int_{\R^N} u^{p+m}\varphi^2
&\le \frac{mp}{4} \int_{\R^N} u^{p+m-2}|\nabla u|^2 \varphi^2+Cp^{1+2\theta}\left[\int_{\R^N}u^\frac{p+m+2}2\varphi^{q}dx\right]^{2/q}.
\end{align*}

Then, let us bound $\int_{\R^N}u^\frac{p+m+2}2\varphi^{q}dx$.
Using Lemma \ref{psi-lm} and that $q\in (1,2)$, there is a dimensional constant $C>0$ such that for any $t\in [0,T]$ and $x_0\in\R^N$, 
\begin{align*}
\int_{\R^N} u^{\frac{p+m+2}{2}}(t,x) \varphi^q(x)dx &\le \int_{\R^N} u^{\frac{p+m+2}{2}}(t,x) \varphi(x)dx\nonumber\\
&\leq   C\sum_{\kappa z\in \Z^N} \int_{B_{{N/\kappa}}(z)} u^{\frac{p+m+2}{2}}(t,x)\varphi(x-x_0) dx\nonumber\\
&\le C  \sum_{\kappa z\in \Z^N}\varphi(z-x_0) \int_{B_{{N}/{\kappa}}(z)} u^{\frac{p+m+2}{2}}(t,x)\varphi^2(x-z)dx\nonumber\\
&\le CY_{\frac{p+m+2}{2}}(t)\sum_{\kappa z\in\Z^N} \varphi(z-x_0)\le  C Y_{\frac{p+m+2}{2}}(t),
\end{align*}
where we used the notation \eqref{Y-eq}.
This implies that
\begin{align}
\label{J-eq}
C_1p\int_{\R^N} u^{p+m}(t,\cdot)\varphi^2(\cdot) &\le    \frac{m p}{4}\int_{\R^N}u^{p+m-2}|\nabla u|^2\varphi^2 dx +Cp^{1+2\theta} Y_{\frac{p+m+2}{2}}(t)^{2/q}.
\end{align}

It follows from \eqref{J1-J4-eq1} and \eqref{J-eq} that
\begin{equation}
\label{J1-J4-eq2}
J_1+J_2+J_3\le \frac{3mp}{4}\int_{\R^N}  u^{p+m-2}|\nabla u|^2\varphi^2 dx + Cp^{1+2\theta} Y_{\frac{p+m+2}{2}}(t)^{2/q}
+Cp\kappa^{-N}.
\end{equation}

\subsection{Proof of Theorem \ref{apriori-boundedness-thm}(2)}

The proof of Theorem \ref{apriori-boundedness-thm}(2) follows from the following proposition.

\begin{prop}\lb{P.4.1}
    Suppose that $\nabla v$ is a given $L^\infty$ vector field,
    and $u$ solves 
\[
u_t = m\nabla\cdot \big((\eps+u)^{m-1}\nabla u\big) - \chi \nabla \cdot (u \nabla v) + u(a - b u)\qquad\text{in $[0,T]\times\R^N$,}
\]
with bounded initial data, and $u$ satisfies \eqref{u-p0-eq1}. Then $u$ is uniformly bounded in $[0,T]\times\R^N$ with the bound depending on general constants, but  independent of $\eps$ and $T$.  Moreover, for any $M>\|u_0\|_\infty$, there exists $T>0$ such that $\|u\|_\infty\leq M$ in $[0,T]\times\R^N$.
\end{prop}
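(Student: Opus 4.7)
The plan is to carry out a Moser-type iteration based on the localized energy identity \eqref{lp-eq1} combined with the nonlinear-term bound \eqref{J1-J4-eq2}. Multiplying \eqref{lp-eq1} by $p+1$, substituting \eqref{J1-J4-eq2}, and noting that $mp \int u^{p-1}(u+\eps)^{m-1}|\nabla u|^2\varphi^2 \ge mp \int u^{p+m-2}|\nabla u|^2\varphi^2$ so that the full diffusion dominates the positive piece $\tfrac{3mp}{4}\int u^{p+m-2}|\nabla u|^2\varphi^2$ on the right, the remaining non-positive diffusive residue may be dropped. This yields for every $p \ge m$ the differential inequality
\[
\frac{d}{dt} X_{p+1,x_0}(t) + X_{p+1,x_0}(t) \le Cp^{2+2\theta(p)}\, Y_{(p+m+2)/2}(t)^{2/q(p)} + Cp^2 \kappa^{-N},
\]
with $\theta(p) = \tfrac{N(p+m-2)}{2(p+m+2)}$ and $q(p) = \tfrac{p+m+2}{p+m}$. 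Gronwall's inequality, followed by a spatial shift of $\varphi$ and taking suprema over $x_0$ and $t \in [0,T]$ (as at the end of Section \ref{S.3}), yields the closed recursion
\[
M_p := \sup_{t \in [0,T]} Y_{p+1}(t) \le C\|u_0\|_\infty^{p+1}\kappa^{-N} + Cp^{2+2\theta(p)}\, M_{(p+m)/2}^{2/q(p)} + Cp^2\kappa^{-N},
\]
where I fix $\kappa$ at the small universal constant used in Section \ref{S.3}.

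I would then set up the iteration by choosing $p_0 := \max\{N+1, m+1\}$ and defining $p_k := 2p_{k-1} + 2 - m$, so that $\tfrac{p_k+m+2}{2} = p_{k-1}+1$ and $p_k \sim 2^k(p_0 + 2 - m)$. Writing $M_k := M_{p_k}$ and $\beta_k := \tfrac{2(p_k+m)}{p_k+m+2}$, the recursion becomes $M_k \le \Gamma_k + B_k\, M_{k-1}^{\beta_k}$, with $B_k = Cp_k^{2+2\theta_k}$ and $\Gamma_k = C(\|u_0\|_\infty^{p_k+1} + p_k^2)\kappa^{-N}$. Setting $A_k := \max\{M_k^{1/(p_k+1)}, 1\}$ and using the key identity $(p_{k-1}+1)\beta_k = p_k + m$, subadditivity of $x \mapsto x^{1/(p_k+1)}$ gives
\[
A_k \le (2B_k)^{1/(p_k+1)}\, A_{k-1}^{(p_k+m)/(p_k+1)} + (2\Gamma_k)^{1/(p_k+1)}.
\]
Since $p_k \sim 2^k$, the exponent satisfies $\tfrac{p_k+m}{p_k+1} = 1 + O(2^{-k})$, the factor $\tfrac{\log B_k}{p_k+1} = O(k\cdot 2^{-k})$ is summable, and $\Gamma_k^{1/(p_k+1)}$ stays of order $\|u_0\|_\infty$; taking logarithms and iterating produces $A_k \le L < \infty$ uniformly in $k$. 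Since $\varphi \equiv 1$ on $B_{N/\kappa}(x_0)$, this translates to a uniform bound on local $L^{p_k+1}$-norms of $u$; passing $k \to \infty$ yields $\|u\|_{L^\infty([0,T]\times\R^N)} \le L$ with $L$ independent of $\eps$ and $T$.

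For the short-time statement, I would retrack Gronwall more precisely: the factor $\int_0^t e^{-(t-s)}\,ds \le 1 \wedge T$ on $[0,T]$ means that on a short interval the non-initial-data terms in the recursion pick up an explicit factor of $T$, i.e., $B_k$ and the $p_k^2\kappa^{-N}$ piece of $\Gamma_k$ are multiplied by $T$. Propagating this through the $(p_k+1)$-th-root iteration shows that the limiting bound $L(T)$ satisfies $L(T) \to \|u_0\|_\infty$ as $T \to 0^+$, so that for any $M > \|u_0\|_\infty$ a sufficiently small $T$ guarantees $\|u\|_\infty \le M$. \textbf{The principal obstacle} is closing the recursion uniformly in $k$: one must simultaneously manage the polynomial blow-up $p_k^{2+2\theta_k}$, the slightly-larger-than-one self-map exponent $\tfrac{p_k+m}{p_k+1}$, and the initial-data contribution $\|u_0\|_\infty^{p_k+1}$ which is only tamed after the $(p_k+1)$-th root; the cutoff $A_k = \max\{M_k^{1/(p_k+1)}, 1\}$ is used to prevent degeneracy when some local $L^p$-norm dips below $1$.
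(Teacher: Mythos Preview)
Your proposal is correct and follows essentially the same Moser iteration as the paper: the paper also combines \eqref{lp-eq1} with \eqref{J1-J4-eq2}, applies Gronwall, takes suprema, and iterates along the exponent sequence $r_n=2r_{n-1}-m-1$ (equivalently your $p_k+1$), passing to $(p+1)$-th roots via $W_r=\max\{Y_r^{1/r},C_\kappa\}$ in place of your $A_k$. Note a small arithmetic slip: for your identity $\tfrac{p_k+m+2}{2}=p_{k-1}+1$ to hold you need $p_k=2p_{k-1}-m$, not $2p_{k-1}+2-m$; this does not affect the argument.
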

\begin{proof}

First of all, by \eqref{lp-eq1} and \eqref{J1-J4-eq2},  there holds
\begin{align*}
&\frac{1}{p+1}\frac{d}{dt}\int_{\R^N} u^{p+1}\varphi^2  dx + \frac{1}{p+1}\int_{\R^N} u^{p+1}\varphi^2  dx\le  Cp^{1+2\theta} Y_{\frac{p+m+2}{2}}(t)^{2/q}
+Cp\kappa^{-N}
\end{align*}
where 
\[
\theta=\frac{N(p+m-2)}{2(p+m+2)}\in (cN, N)\quad\text{ for some }c\in(0,1)\text{ and }
\quad q=\frac{p+m+2}{p+m}.\]
Multiplying the above inequality on both sides by $e^t$, and integrating in time give 
\begin{align*}
&\int_{\R^N}u^{p+1}(t,x)\varphi^2(x)dx\\
&\le e^{-t}\int_{\R^N}u^{p+1}(0,x)\varphi^2(x)dx+\int_0^t e^{-(t-s)}(Cp^{2+2\theta} Y_{\frac{p+m+2}{2}}(t)^{2/q}
+Cp^2\kappa^{-N}) \, ds\\
&\le \int_{\R^N}u^{p+1}(0,x)\varphi^2(x)dx+Cp^{2+2\theta} Y_{\frac{p+m+2}{2}}(t)^{2/q}
+Cp^2\kappa^{-N},
\end{align*}
where we used that $Y_p(\cdot)$ is non-decreasing in time by its definition. This implies that for $t\in[0,T],$
\begin{align*}
\int_{\R^N}u^{p+1}(t,x)\varphi^2(x)dx&\le \int_{\R^N}u^{p+1}(0,x)\varphi^2(x)dx+Cp^{2+2\theta} Y_{\frac{p+m+2}{2}}(t)^{2/q}
+Cp^2\kappa^{-N}.
\end{align*}
Since $u(0,\cdot)$ is uniformly bounded, by shifting the space variable, we get for any $x_0\in\R^N$, 
\begin{align*}
\int_{\R^N}u^{p+1}(t,x)\varphi^2(x-x_0)dx&\le \max\left\{ Cp^{2+2\theta} Y_{\frac{p+m+2}{2}}(t)^{2/q},\, (C_\kappa)^p\right\}
\end{align*}
where $C_\kappa$ depends on general constants and $\k$. After taking supremum over $x_0$ and $t$, we get for $t\in [0,T]$,
\beq\lb{4.12}
Y_{p+1}(t)\le \max\left\{Cp^{2+2\theta} \left[Y_{\frac{p+m+2}{2}}(t)\right]^{\frac{2(p+m)}{p+m+2}},\, (C_\kappa)^p\right\}.
\eeq

Since $\theta\leq N$, if denoting
\begin{equation*}
W_{r}(t):= \max\left\{Y_r(t)^\frac{1}{r},\,C_\kappa\right\}
\end{equation*}
for some $C_\kappa$ depending only on general constants and $\kappa$, we get
\begin{align}
\label{lp-eq6}
W_{p+1}(t)&\leq \max\left\{C^\frac{1}{p+1}p^{\frac{2+2\theta}{p+1}} \left[Y_{\frac{p+m+2}{2}}(t)\right]^{\frac{2}{p+m+2}\cdot\frac{p+m}{p+1}},\, (C_\kappa)^\frac{p}{p+1}\right\}\nonumber\\
&\leq C^\frac{1}{p+1}p^{\frac{2+2N}{p+1}} \left[W_{\frac{p+m+2}{2}}(t)\right]^{\frac{p+m}{p+1}}.
\end{align}

Now, define $r_0:=p_0$ and, iteratively,
$$
r_n:=2r_{n-1}-m-1\quad\text{ for $n\geq1$}.
$$
It is easy to get
$$
r_n= 2^n(p_0-m-1)+m+1.
$$
Then \eqref{lp-eq6} becomes
\[
W_{r_n}(t)\leq C^\frac{1}{r_n}(r_n)^{\frac{2+2N}{r_n}} \left[W_{r_{n-1}}(t)\right]^{\frac{r_n+m-1}{r_n}},
\]
and then by iteration,
\begin{align}
\label{lp-eq13}
W_{r_n}(t)&\le C^{\frac{1}{r_n}+\frac{1}{r_{n-1}}(1+\frac{m-1}{r_n})}(r_n)^{\frac{2+2N}{r_n}}(r_{n-1})^{\frac{2+2N}{r_{n-1}}(1+\frac{m-1}{r_n})} \left[W_{r_{n-2}}(t)\right]^{(1+\frac{m-1}{r_{n-1}})(1+\frac{m-1}{r_n})} \nonumber\\
&\le\ldots\le \exp \Big({\sum_{j=1}^n\frac{\beta_{j+1}}{r_j}}\Big)\Big[ \prod_{j=1}^n (r_j)^{\frac{2+2N}{r_j}\beta_{j+1}}\Big] \left[W_{r_{0}}(t)\right]^{\beta_{1}}
\end{align}
where $\beta_{n+1}:=1$, and for $j=1,\ldots,n$,
\[
\beta_j:=(1+\frac{m-1}{r_n})\ldots (1+\frac{m-1}{r_j}).
\]

Since $p_0>m+1$, $C^{-1}2^n\leq r_n\leq C2^n$ for some $C>1$. Consequently, $\beta_j\in [1,C]$ for some general constant $C>1$ independent of $j$ and $n$, and so
\[
{\sum_{j=1}^n\frac{\beta_{j+1}}{r_j}}\leq C,
\]
\[
\ln \prod_{j=1}^n (r_j)^{\frac{2+2N}{r_j}\beta_{j+1}} =\sum_{j=1}^\infty\frac{(2+2N)\beta_{j+1}}{r_j}\ln r_j \leq  C\sum_{j=1}^\infty j2^{-j}<\infty.
\]
Hence $\prod_{j=1}^n (r_j)^{\frac{2+2N}{r_j}\beta_{j+1}}\leq C$ for some $C>0$ uniformly for all $n$.  Finally, since $W_{r_0}(t)=W_{p_0}(t)\leq C_\k$ by \eqref{u-p0-eq1}, we obtain from \eqref{lp-eq13} that there exists $C_\k$ depending only on $C$ and $\k$ such that
\[
W_{r_n}(t)\leq C_\k\quad\text{and then}\quad \sup_{x_0\in\R^N}\left[\int_{B_{N/\kappa}(x_0)}u^{r_n}(t,x)dx\right]^{1/r_n}\leq C_\k.
\]
After passing $n\to \infty$, this implies that there is $C$ independent of $\eps$  such that
$$
\|u(\cdot,\cdot)\|_{L^\infty([0,T]\times\R^N)}\le C.
$$

Finally, the last claim is clear from the proof. This is because when $T\leq 1$, the constants $C$ and $(C_\kappa)^p$ in \eqref{4.12} can be replaced by, respectively, $C'T$  and $C'_\kappa(p^2T+\|u_0\|_\infty^p)$ for some $C',C'_\kappa$ independent of $T\in (0,1]$. 
\end{proof}

\section{Global existence}

In this section, we first study the global well-posedness of weak/classical solutions of \eqref{main-perturbed-eq} with $\eps>0$  and prove Proposition \ref{main-perturbed-thm} in Subsection \ref{global-existence-proof}. Then we pass $\eps\to 0$ and prove Theorem \ref{main-thm} in Subsection \ref{s-main-thm}.  

\subsection{Proof of Proposition \ref{main-perturbed-thm}}
\label{global-existence-proof}

In this subsection, we prove Proposition \ref{main-perturbed-thm}. 
We do so by means of the following strategy. First, we establish the existence of a unique weak solution on an initial interval $[0,T]$ via the Banach fixed‑point theorem. Next, we extend this solution to a maximal interval $[0,T_{\max})$. Then we show that  the solution is a classical solution. Finally, 
 applying the previously derived a priori estimates, we demonstrate that $T_{\max} = \infty$.

Notice  that, in the case that $m=1$, the local existence of a unique classical solution can be achieved via semigroup approach. Indeed, one can show, via Banach fixed point theorem, the existence of a unique mild solution  satisfying
\begin{equation*}
\begin{cases}
u(t,\cdot)=e^{(\Delta -I)t}u_{0}-\chi \int_0^t e^{(\Delta-I)(t-s) }\nabla \cdot(u\nabla v)ds+\int_0^t e^{(\Delta -I)(t-s)} u(s,\cdot)\big(a+1-b u(s,\cdot)\big)ds\cr
v(t,\cdot)=e^{(\Delta-I)t}v_{0}+\int_0^t e^{(\Delta-I)(t-s)} v(s,\cdot)\big(1-u(s,\cdot)\big)ds,
\end{cases}
\end{equation*}
on some small interval $[0,T]$, see e.g., \cite{ hassan2024chemotaxis, winkler2010boundedness}. However, when $m > 1$, a semigroup representation is not available. In the literature on chemotaxis models with nonlinear diffusion, the local well-posedness of solutions to the perturbed problem is typically established using the theory of quasilinear parabolic equations (see, e.g., \cite{amann1993nonhomogeneous,ladyzhenskaya1968linear}). However, these arguments are often presented without full details. For completeness, we provide a detailed proof of the local well-posedness of solutions to \eqref{main-perturbed-eq}.

First of all, 
for given $T>0$ and $p>m+1$, let
\begin{equation}
\calX^p(T):=\{u\in C([0,T],L_{\rm loc}^p(\mathbb{R}^N))\},
\end{equation}
equipped with the norm
\[
\|u\|_{\calX^p(T)} := \frac{1}{|B_1|}\sup_{0\le t\le T,\,x_0\in\R^N}\|u(t,\cdot)\|_{L^p(B_1(x_0))}.
\]
Then $\calX^p(T)$ with the norm is a Banach space. Moreover, if $u\in L^\infty$, then $\|u\|_\infty\geq \|u\|_{\calX^p(T)}$.
For any $M> \|u_0\|_\infty$, let
\begin{equation*}
\mathcal{Z}^p(T,M)=\{u \in  \calX^p(T)\,|\, u(0,\cdot)=u_0,\,
 u\geq 0, \, \|u\|_{\calX^p(T)} \le M\}.
\end{equation*}
It is clear that $\mathcal{Z}^p(T,M)$ is a closed convex subset of $\calX^p(T)$.

The next lemma will be used to define the contraction mapping. 

\begin{lem}
\label{global-existence-lm2}
Assume that $u_0$ is uniformly {$C^{1+\alpha}$}, and $v_0$ is uniformly $C^{2+\alpha}$, $M>\|u_0\|_\infty$, $T\geq 1$, and $p>\max\{N,m+1\}$. For $i=1,2$ and any given $\tilde u_i\in \mathcal{Z}^p(T,M)$ such that $\tilde u_i$ is uniformly H\"{o}lder continuous in space and time, let $v_i$ be the classical solution to
\begin{equation*}
\begin{cases}
(v_i)_t=\Delta v_i-\tilde u_i (t,x) v_i,\quad & (t,x)\in \Omega_T,\cr
v_i(0,x)=v_{0}(x),\quad &x\in\R^N,
\end{cases}
\end{equation*}
and let $u_i$ be a classical solution to
\begin{equation*}
\begin{cases}
(u_i)_t=m\nabla \cdot \left((\eps + u_i)^{m-1} \nabla u_i\right)-\chi\nabla \cdot(u_i \nabla  v_i)+u_i(a-b  u_i),\,\, & (t,x)\in \Omega_T,\cr
u_i(0,x)= u_{0}(x),\,\, &x\in \R^N.
\end{cases}
\end{equation*}
Then there exists $T_1\in (0,1]$ depending only on $\eps, M,C$, $\|u_0\|_{C^{1+\alpha}}$ and $\|v_0\|_{C^{2+\alpha}}$ such that
\[
\|u_1-u_2\|_{\calX^p(T_1)}\leq \frac12\|\tilde u_1-\tilde u_2\|_{\calX^p(T_1)}.
\]
\end{lem}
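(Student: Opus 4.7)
The strategy is to set up a Banach contraction for the map $\tilde u\mapsto u$ (factoring through $v$) on $\mathcal{Z}^p(T_1,M)$. The crucial observation is that $\eps>0$ is fixed, so once $u_i$ is a priori bounded in $L^\infty$, the equation for $u_i$ is uniformly parabolic. Accordingly, the contraction estimate splits into a linear estimate for $w:=v_1-v_2$ in terms of $\tilde u_1-\tilde u_2$, and a uniformly parabolic energy estimate for $U:=u_1-u_2$ in terms of $w$ and $\nabla w$.

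For $w$: since $\tilde u_i\ge 0$ are uniformly H\"older, the maximum principle gives $0\le v_i\le\|v_0\|_\infty$ and Schauder theory provides uniform $C^{1+\alpha,2+\alpha}$-bounds on $v_i$; in particular $\|\nabla v_i\|_\infty\le C$. The difference solves
\begin{equation*}
w_t=\Delta w-\tilde u_1\,w-(\tilde u_1-\tilde u_2)\,v_2,\qquad w(0,\cdot)=0,
\end{equation*}
which I treat via the Duhamel formula for $\Delta-I$ and the semigroup estimates \eqref{Lp Estimates-2}, \eqref{Lp Estimates-3}, summed over unit-ball translates using the cut-off $\varphi$ of Lemma \ref{psi-lm} to match the local $L^p$-norm of the source. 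This yields
\begin{equation*}
\sup_{t\in[0,T_1]}\!\bigl(\|w(t)\|_\infty+\|\nabla w(t)\|_\infty\bigr)\le C\,T_1^{\beta}\,\|\tilde u_1-\tilde u_2\|_{\mathcal{X}^p(T_1)}
\end{equation*}
for some $\beta>0$, the $T_1^{\beta}$ gain coming from time-integrability of the kernels and $p>N$.

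For $U$: local parabolic theory together with the assumption $u_0\in C^{1+\alpha}$ supplies $\|u_i\|_\infty\le M'$ on a short interval $[0,T_1^*]$, along with uniform $C^{1+\alpha,2+\alpha}$-bounds. The difference satisfies
\begin{equation*}
U_t=\Delta\bigl[(\eps+u_1)^m-(\eps+u_2)^m\bigr]-\chi\nabla\!\cdot\!\bigl[U\nabla v_1+u_2\nabla w\bigr]+aU-b(u_1+u_2)U,
\end{equation*}
with $U(0,\cdot)=0$. Writing $(\eps+u_1)^m-(\eps+u_2)^m=G\,U$ with $G:=m\!\int_0^1(\eps+su_1+(1-s)u_2)^{m-1}ds\ge m\eps^{m-1}>0$ exhibits the uniform ellipticity. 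Testing against $|U|^{p-2}U\varphi^2$, integrating by parts, and absorbing all cross terms (those from $\nabla G$, $\Delta G$, from the drift $\chi U\nabla v_1$, and from the logistic part) via Young's inequality into the dissipation $c\eps^{m-1}\int|U|^{p-2}|\nabla U|^2\varphi^2$, and using the bound on $\nabla w$ from the previous step, leads to
\begin{equation*}
\frac{d}{dt}\int|U|^p\varphi^2\,dx\le C\int|U|^p\varphi^2\,dx+C\,\|\tilde u_1-\tilde u_2\|_{\mathcal{X}^p(T_1)}^p.
\end{equation*}
Gronwall and a supremum over $x_0$ then yield $\|U\|_{\mathcal{X}^p(T_1)}\le C\,T_1^{1/p}\,\|\tilde u_1-\tilde u_2\|_{\mathcal{X}^p(T_1)}$, and taking $T_1$ small enough delivers the $\tfrac12$-contraction.

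The main technical obstacle is controlling $\nabla G$ and $\Delta G$ after integration by parts of the nonlinear diffusion, since these involve derivatives of $u_1,u_2$. This is resolved by using the Schauder regularity of the classical solutions $u_i$ on $[0,T_1^*]\times\R^N$ (available since $\eps>0$ makes the equation uniformly parabolic, and constants are permitted to depend on $\eps$), which bounds $\nabla G$ and $\Delta G$ uniformly so that they can be absorbed into the uniformly elliptic dissipation term.
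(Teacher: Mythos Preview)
Your proposal is correct and follows essentially the same route as the paper: a localized Duhamel/semigroup estimate for $w=v_1-v_2$, followed by a weighted $L^p$-energy estimate for $U=u_1-u_2$ that exploits the uniform parabolicity coming from $\eps>0$ together with the gradient bound $|\nabla u_i|\le K_2$ supplied by classical theory from the $C^{1+\alpha}$ initial data. Two small remarks. First, the paper estimates $\nabla w$ only in localized $L^p$ (namely $\|(\nabla w)\varphi^{1/p}\|_{L^p}\lesssim t^{1/2}\|(\tilde u_1-\tilde u_2)\varphi^{1/p}\|_{L^p}$) rather than in $L^\infty$; your $L^\infty$ bound is also reachable since $p>N$, but the $L^p$ version is all that is needed to close the energy inequality and avoids the extra summation-over-balls step. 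Second, after a single integration by parts of $\Delta(GU)$ against $|U|^{p-2}U\varphi^2$ only $\nabla G$ appears, not $\Delta G$; the paper equivalently writes the diffusion difference as $\nabla\!\cdot\!\bigl[m(\eps+u_1)^{m-1}\nabla U+m(\nabla u_2)\bigl((\eps+u_1)^{m-1}-(\eps+u_2)^{m-1}\bigr)\bigr]$, so in either formulation only first derivatives of $u_i$ need to be bounded.
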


\begin{proof}
Since $\tilde u_i$ is $C^\alpha$, by \cite[Theorem 8.1, Chapter V]{ladyzhenskaya1968linear}, $v_i(\cdot,\cdot)$ is uniformly $C^{1+\frac{\alpha}{2},2+\alpha}$ in $\Omega_T$ with norm depending on the H\"older norm of $\tilde u_i$ and$\|v_{0}\|_{C^{2,\alpha}}$, and then $u_i$ is locally  uniformly $C^{1+\frac{\alpha}{2},2+\alpha}$ in the interior of $\Omega_T$. By Lemma \ref{v-bound-lm}, 
since $\tilde u_i\in \mathcal{Z}^p(T,M)$ with $p>N$ is  uniformly bounded, there exists $K_1$ depending only $\|v_{0}\|_{W^{1,\infty}}$ 
 and $ M$ 
such that 
\beq\lb{4.1K}
|\nabla v_i(\cdot,\cdot)|\leq K_1\quad \text{ in }\Omega_T.
\eeq
By Proposition \ref{P.4.1}, there is $\tilde K$ 
  depending only on general constants and $K_1$
such that
\begin{equation}
\label{4.1Ktilde}
\|u_i(\cdot,\cdot)\|_\infty \le \tilde K \quad \text {in }\Omega_T.
\end{equation} 

Let $\varphi$ be from Lemma \ref{psi-lm} with parameter $\kappa=1$. It suffices to bound
\[
\int_{\R^N}|u_1(t,x)-u_2(t,x)|^p\varphi\, dx.
\]
Let 
$z = u_{1} - u_{2}$, then it satisfies
\begin{align}\label{zeqn1}
    z_{t} = &\nabla\cdot\left[m(u_{1}+\eps)^{m-1}\nabla z+ m\nabla u_{2}((u_{1}+\eps)^{m-1} - (u_{2}+\eps)^{m-1})\right] -\chi \nabla\cdot(z\nabla v_{1}) \nonumber\\
&+\chi\nabla\cdot( u_{2} \nabla(v_{2} -v_{1}))+ az - bz(u_{1} +u_{2})
\end{align}
in classical sense.
For some $p\geq m+1$ an even integer, multiplying \eqref{zeqn1} by $z^{p-1}\varphi$ and integrating it over $\R^N$ yield
\begin{align*}
    \frac{1}{p}\frac{d}{dt}\int_{\R^N} z^p\varphi
 & = -m(p-1)\int_{\R^N}z^{p-2} |\nabla z|^2 (u_{1} +\eps)^{m-1}\varphi  -m\int_{\R^N}z^{p-1} (u_{1} +\eps)^{m-1}( \nabla z\cdot\nabla\varphi)\nonumber\\
&\quad  -m(p-1)\int_{\R^N}z^{p-2} ((u_{1} +\eps)^{m-1} - (u_{2} + \eps)^{m-1})(\nabla z\cdot\nabla u_{2})\varphi \nonumber\\
&\quad  -m\int_{\R^N}z^{p-1} ((u_{1} +\eps)^{m-1} - (u_{2} + \eps)^{m-1}) (\nabla u_{2}\cdot \nabla \varphi)\nonumber\\
    &\quad  + \chi (p-1)\int_{\R^N}z^{p-1} (\nabla z\cdot\nabla v_1)\varphi + \chi \int_{\R^N}z^{p} (\nabla v_1\cdot \nabla \varphi)\nonumber\\
    &\quad- \chi (p-1)\int_{\R^N}u_{2}z^{p-2} (\nabla z \cdot\nabla(v_{2}-v_{1}))\varphi  - \chi\int_{\R^N}u_{2}z^{p-1}(\nabla(v_{2}-v_{1})\cdot \nabla \varphi ) \nonumber\\
    &\quad +\int_{\R^N} (az^p-bz^p(u_1+u_2))\varphi.
\end{align*}
It follows from \eqref{4.3} that $D^2v_i$ is bounded in $L^p_{\rm loc}$ for each $p\geq1$  (uniformly in space by shifting and locally uniformly in time). Also, recall \eqref{4.1K}, \eqref{4.1Ktilde}, and that $u_i(0,\cdot) =u_0(\cdot)$ is uniformly $C^{1+\alpha}$ and $v_i$ is uniformly bounded in $W^{1,\infty}$. By the second part of \cite[Theorem 3.1, Chapter V]{ladyzhenskaya1968linear},  we get $|\nabla u_i|\leq K_2$, for all $t\in [0,1]$ for some $K_2>0$ depending only on the general constants, $\eps, M$, $\|u_0\|_{C^{1+\alpha}}$ and $\|v_0\|_{C^{2+\alpha}}$.  
Hence, using these bounds and $|\nabla \varphi|\leq \varphi$ and $0\leq u_i\leq M$, we obtain for some $C>0$ depending only on the general constants, $\eps,M,p$, $\|u_0\|_{C^{1+\alpha}}$ and $\|v_0\|_{C^{2+\alpha}}$ such that for $t\in [0, 1],$

\begin{align}\label{fix-est1}
  \frac{1}{p}\frac{d}{dt}\int_{\R^N} z^p\varphi
 & \leq  -\frac{m(p-1)}{2}\int_{\R^N}z^{p-2} |\nabla z|^2 (u_{1} +\eps)^{m-1}\varphi   +C\int_{\R^N}z^{p}\varphi \nonumber\\
    &\quad +C\int_{\R^N}z^{p-2} \frac{|(u_{1} +\eps)^{m-1} - (u_{2} + \eps)^{m-1}|^2}{(u_{1} +\eps)^{m-1}}\varphi \nonumber\\
    &\quad + C\int_{\R^N}z^{p-1} |(u_{1} +\eps)^{m-1} - (u_{2} + \eps)^{m-1}|  \varphi \nonumber \\
    &\quad + C\int_{\R^N}z^{p-2} |\nabla z ||\nabla(v_{2}-v_{1})|\varphi  +C \int_{\R^N}|z^{p-1}||\nabla(v_{2}-v_{1})| \varphi  \nonumber\\
    &\quad    +C\int_{\R^N}z^{p-1} |\nabla z|\varphi +\int_{\R^N} (az^p-bz^p(u_1+u_2))\varphi,
\end{align}
where we also applied Young's inequality.

Direct computation yields that there is $C_\eps$ depending on $\eps,m,M$ such that
\[
|(u_{1} +\eps)^{m-1} - (u_{2} + \eps)^{m-1}| \le C_\eps|u_{1} -u_{2}| = C_\eps|z|.
\]
Since $p$ is even, $z^{p}$ and $z^{p-2}$ are non-negative.
By Young's inequality again, we have for any $\delta>0$,
\begin{align*}
z^{p-2} |\nabla z||\nabla(v_{2}-v_{1})| &\le \delta z^{p-2}|\nabla z|^2 \eps^{m-1}  + C_{\delta,\eps}|\nabla (v_{2} -v_{1})|^2z^{p-2}\\
    &\le  \delta z^{p-2}|\nabla z|^2(u_{1}+\eps)^{m-1}  + C_{\delta,\eps}|\nabla (v_{2} -v_{1})|^p+C_{\delta,\eps} z^{p},
\end{align*} 
and, similarly,
\begin{align*}
    |z|^{p-1} |\nabla(v_{2}-v_{1})| +|z|^{p-1}|\nabla z|&\le \delta z^{p-2}|\nabla z|^2(u_{1}+\eps)^{m-1}+C_{\delta,\eps} z^{p}+C|\nabla(v_2-v_1)|^p.
\end{align*} 
Fixing $\delta>0$ to be sufficiently small and plugging these into \eqref{fix-est1} yield
\begin{align*}
  &  \frac{1}{p}\frac{d}{dt}\int_{\R^N} z^p\varphi \le C_\eps \int_{\R^N} z^p\varphi + C_\eps \int_{\R^N}|\nabla(v_{2}-v_{1})|^p\varphi.
\end{align*}
This implies that for all  $t\in [0,1]$,
\begin{equation}\label{zpsi-est}
    \left\|(z\varphi_1)(t,\cdot)\right\|^p_{L^p} \le C_\eps  t\sup_{0<s<T}\left\|(\nabla(v_{2}-v_{1})\varphi_1)(s,\cdot)\right\|^p_{L^p}
\end{equation}
where $\varphi_1:=\varphi^{1/p}$ and the constant $C_\eps$ only depends on $C,\eps,M,p$, $\|u_0\|_{C^{1+\alpha}}$ and $\|v_0\|_{C^{2+\alpha}}$.

Now, note  that $v_{i}\varphi_1$ solves the equation 
$$
(v_{i}\varphi_1)_t = \Delta(v_{i}\varphi_1)  -[\tilde u_iv_{i}\varphi_1 + 2\nabla v_{i}\cdot\nabla\varphi_1+ v_{i}\Delta\varphi_1].
$$
Then, denoting $w:=(v_2-v_1)\varphi_1$ and recalling $T_p$ from section \ref{ss.2.1}, we get 
\begin{align*}
w(t,\cdot)&=T_p(t)w_0 + \int_0^t T_p({t-s}) \big[w-(\tilde u_2-\tilde u_1)v_{2}\varphi_1 -\tilde u_1w\\
&\quad - 2\nabla (v_{2}-v_1)\cdot\nabla\varphi_1- (v_{2}-v_1)\Delta\varphi_1 \big] ds .   
\end{align*}
Since $|\nabla\varphi|\leq \varphi$,  we have $|\nabla\varphi_1|\leq \varphi_1 $. In view of \eqref{Lp Estimates-2} and by Gr\"{o}nwall's inequality, we get for some $C$ and for all  $t\in[0,1]$,
\begin{align}\lb{5.12}
& \|w(t,\cdot)\|_{L^p}
\le C\int_0^t \|(\tilde u_{1} - \tilde u_2)(s,\cdot)\varphi_1\|_{L^p}ds +  \|w(s,\cdot)\|_{L^p} + C\int_0^t \|\nabla(v_{2}-v_{1})(s,\cdot)\varphi_1\|_{L^p}\nonumber\\
&\quad\le Ct \sup_{s\in[0,t]}\Big[ \|(\tilde u_{1} - \tilde u_2)(s,\cdot)\varphi_1\|_{L^p}ds +  \|w(s,\cdot)\|_{L^p} + \|\nabla(v_{2}-v_{1})(s,\cdot)\varphi_1\|_{L^p}\Big],
\end{align}
where we also used that $|v_2|\leq \|v_0\|_\infty, |\tilde u_1|\leq M$.
Similarly, by \eqref{Lp Estimates-3}, for   $t\in[0,1]$  we have 
\begin{align}\lb{5.13}
& \|\nabla w(t,\cdot)\|_{L^{p}}\nonumber
\le  C\int_0^t (t-s)^{-\frac{1}{2}}e^{-(t-s)}\|(\tilde u_{1} - \tilde u_2)(s,\cdot)\varphi_1\|_{L^p}\nonumber\\
&\quad + C\int_0^t (t-s)^{-\frac{1}{2}}e^{-(t-s)} \left[\|w(s,\cdot)\|_{L^p}+   \|\nabla(v_{2}-v_{1})(s,\cdot)\varphi_1\|_{L^p}\right]\nonumber\\
&\leq Ct^\frac12\sup_{s\in[0,t]}\Big[\|(\tilde u_{1} - \tilde u_2)(s,\cdot)\varphi_1\|_{L^p} + \|w(s,\cdot)\|_{L^p}+  \|\nabla(v_{2}-v_{1})(s,\cdot)\varphi_1\|_{L^p}\Big].
\end{align}
Then \eqref{5.12} and \eqref{5.13}  yield that for all  $t\in (0,1]$ being sufficiently small,
$$
\|\nabla(v_{2}-v_{1})(t,\cdot)\varphi_1\|_{L^p} \leq C(\| w(t,\cdot)\|_{L^{p}}+\|\nabla w(t,\cdot)\|_{L^{p}})\leq Ct^\frac12\sup_{0<s<t}\|(\tilde u_{1} - \tilde u_2)(s,\cdot)\varphi_1\|_{L^p}  .
$$

By \eqref{zpsi-est}, we have
\begin{align*}
    \|z(t,\cdot)\varphi_1\|^p_{L^p} 
       & \leq Ct^{1+\frac{p}{2}} \sup_{0<s<t}\|(\tilde u_{1}(s) - \tilde u_2(s))\varphi_1\|^p_{L^p}.
\end{align*}
Thus, if $T_1\le 1$ is sufficiently small depending on the general constants, $\eps,M,p$, $\|u_0\|_{C^{1+\alpha}}$ and $\|v_0\|_{C^{2+\alpha}}$, we conclude for all $t\in [0,T_1]$,
\begin{equation}
  \sup_{x_0\in\R^N}  \|(u_1-u_2)(t,\cdot)\|_{L^p(B_1(x_0))} \le \frac12\sup_{x_0\in\R^N,s\in[0,T_1]}\| (\tilde u_1 -\tilde u_2)(s,\cdot)\|_{L^p(B_1(x_0))} .
\end{equation}
\end{proof}

It follows from Lemma \ref{global-existence-lm2} that we can uniquely determine a function $u\in \mathcal{Z}^p(T_1,M)$ given $\tilde u\in\mathcal{Z}^p(T_1,M)$ and $\tilde u$ being H\"{o}lder continuous. We remove the H\"{o}lder continuity requirement in the next lemma and conclude  that the mapping
is a contraction $\mathcal{Z}^p(T_1,M)$.

\begin{lem}
\label{global-existence-lm1}
For any $M>\|u_0\|_\infty$ and $p>N$, there exists $T_1=T_1(M)\in (0,T]$ such that for any $\tilde u\in \mathcal{Z}^p(T,M)$, there exist $u\in \mathcal{Z}^p(T_1,M)$ and a bounded function $v$ such that they are weak solutions to
\begin{equation}
\label{fix}
u_t=m\nabla\cdot( (\eps +u)^{m-1}\nabla u)-\chi\nabla\cdot(u\nabla v)+u(a-bu),\quad u(x,0)=u_0
\end{equation}
and
\beq\lb{4.13}
v_t=\Delta v-\tilde u v,\quad v(x,0)=v_0,
\eeq
respectively, on $[0,T_1]$.
Moreover, there exists a mapping $\calL : \mathcal{Z}^p(T_1,M) \to  \mathcal{Z}^p(T_1,M)$ such that $\calL(\tilde u)=u$, where $u$ is a weak solution of \eqref{fix}
on $[0,T_1]$, and it  is a contraction on $\mathcal{Z}^p(T_1,M)$.

Lastly, if $\tilde u$  is uniformly H\"{o}lder continuous, then $u$ and $v$ are classical solutions to \eqref{fix} and \eqref{4.13}, respectively.

\end{lem}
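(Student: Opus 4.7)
The plan is to bootstrap Lemma \ref{global-existence-lm2} to general $\tilde u$ by smooth approximation. Given $\tilde u\in\mathcal{Z}^p(T,M)$, I would first extend $\tilde u$ to $t\le 0$ by $u_0(x)$, convolve in space and time with a smooth mollifier of width $1/n$, and make a small time-dependent affine correction (or just rescale in time) so that the resulting $\tilde u^{(n)}$ satisfies $\tilde u^{(n)}(0,\cdot)=u_0(\cdot)$. Because mollification does not increase the $\mathcal{X}^p(T)$ norm, $\tilde u^{(n)}\in\mathcal{Z}^p(T,M)$; because $\tilde u^{(n)}$ is smooth it is uniformly H\"older continuous (with constants depending on $n$), and $\tilde u^{(n)}\to\tilde u$ in $\mathcal{X}^p(T)$.

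For each fixed $n$, the non-negativity and H\"older continuity of $\tilde u^{(n)}$ let us solve \eqref{4.13} for $v^{(n)}$ classically in $C^{1+\alpha/2,2+\alpha}$ via Schauder theory (\cite[Ch.~V]{ladyzhenskaya1968linear}), with $0\le v^{(n)}\le \|v_0\|_\infty$. With $v^{(n)}\in C^{1+\alpha/2,2+\alpha}$ in hand, the quasilinear problem \eqref{fix} is uniformly parabolic (since $\eps>0$), and the standard Ladyzhenskaya--Solonnikov--Ural\textprimstress tseva theory produces a local classical solution $u^{(n)}$. The a priori estimates proved previously---Remark \ref{R.3.1} for the local $L^p$ bound with $v=v^{(n)}$ given, and Proposition \ref{P.4.1} for the $L^\infty$ bound---apply here and give bounds on $\|u^{(n)}\|_{L^\infty([0,T]\times\R^N)}$, $\|v^{(n)}\|_{W^{1,\infty}}$, and on $e^{-T/2}(\eps+u^{(n)})^{(p+m-2)/2}\nabla u^{(n)}$ in $L^2_{\rm loc}$ that are uniform in $n$ and $T$; these bounds depend only on general constants together with $\|u_0\|_\infty,\|v_0\|_{W^{1,\infty}}$, so $u^{(n)}$ extends to all of $[0,T]$. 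The last clause of Proposition \ref{P.4.1} further gives $\|u^{(n)}\|_\infty\le M$ on a short interval $[0,T_1]$ with $T_1\in(0,1]$ depending only on $M$ and general constants. An Aubin--Lions argument, using the uniform $L^2_{\rm loc}$ gradient bound for time regularity via the equation, extracts a subsequence with $u^{(n)}\to u$ in $L^p_{\rm loc}$ and a.e., and parabolic regularity forces $v^{(n)}\to v$ in $W^{1,p}_{\rm loc}$; passing to the limit in the weak formulations of \eqref{fix} and \eqref{4.13} gives the desired weak solutions, and $\|u\|_{\mathcal{X}^p(T_1)}\le M$ is preserved. Setting $\mathcal{L}(\tilde u):=u$ defines the map; the classical statement when $\tilde u$ is H\"older is immediate by taking $\tilde u^{(n)}\equiv\tilde u$.

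For the contraction, take $\tilde u_1,\tilde u_2\in\mathcal{Z}^p(T_1,M)$ and corresponding H\"older approximations $\tilde u_i^{(n)}$; Lemma \ref{global-existence-lm2} applies directly to $(\tilde u_1^{(n)},\tilde u_2^{(n)})$ for each $n$, giving
\[
\|u_1^{(n)}-u_2^{(n)}\|_{\mathcal{X}^p(T_1)}\le \tfrac12\|\tilde u_1^{(n)}-\tilde u_2^{(n)}\|_{\mathcal{X}^p(T_1)}.
\]
The right-hand side converges to $\tfrac12\|\tilde u_1-\tilde u_2\|_{\mathcal{X}^p(T_1)}$ by construction, and the strong $\mathcal{X}^p(T_1)$ convergence $u_i^{(n)}\to\mathcal{L}(\tilde u_i)$ from the compactness step lets us pass to the limit on the left, yielding the claimed contraction for $\mathcal{L}$. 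The main obstacle I anticipate is the compactness and limit passage: making sure the bounds on $u^{(n)}$ (which in Section 4 ultimately rely on the local $L^{p_0}$ estimate \eqref{u-p0-eq1}) are truly uniform in $n$ despite the fact that $\tilde u^{(n)}$ only converges weakly in the natural $\mathcal{X}^p$ topology, and justifying the nonlinear $(\eps+u^{(n)})^{m-1}\nabla u^{(n)}$ passage to the limit, for which the uniform local $L^2$ bound on $(\eps+u^{(n)})^{(p+m-2)/2}\nabla u^{(n)}$ combined with the a.e. convergence of $u^{(n)}$ suffices via Vitali's theorem.
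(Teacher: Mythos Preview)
Your strategy matches the paper's: approximate $\tilde u$ by H\"older $\tilde u^{(n)}\in\mathcal{Z}^p(T,M)$, solve \eqref{4.13} and \eqref{fix} classically, invoke Remark~\ref{R.3.1}, Lemma~\ref{v-bound-lm} and Proposition~\ref{P.4.1} for uniform-in-$n$ bounds and the short-time bound $\|u^{(n)}\|_\infty\le M$, pass to the limit, and read off the contraction from Lemma~\ref{global-existence-lm2} applied to the approximants. The paper uses the H\"older regularity lemma (Lemma~\ref{L.holder}) to upgrade weak convergence to pointwise convergence of $u^{(n)}$, but explicitly records your Aubin--Lions route as an equivalent alternative in Remark~\ref{R.5.1}, so that substitution is fine.

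One point does need correction. Aubin--Lions yields only $L^p_tL^q_x$-type convergence along a subsequence, not convergence in $\mathcal{X}^p(T_1)=C([0,T_1];L^p_{\rm loc})$, so your assertion of ``strong $\mathcal{X}^p(T_1)$ convergence $u_i^{(n)}\to\mathcal{L}(\tilde u_i)$ from the compactness step'' is not justified as written; the same issue prevents you from directly concluding $u\in\mathcal{X}^p(T_1)$ and that $\mathcal{L}$ is well-defined (independent of the approximation and subsequence). The paper closes this gap by applying Lemma~\ref{global-existence-lm2} to two approximants $\tilde u^{(n)},\tilde u^{(m)}$ of the \emph{same} $\tilde u$, obtaining $\|u^{(n)}-u^{(m)}\|_{\mathcal{X}^p(T_1)}\le\tfrac12\|\tilde u^{(n)}-\tilde u^{(m)}\|_{\mathcal{X}^p(T_1)}\to 0$; completeness then gives genuine $\mathcal{X}^p(T_1)$-convergence, and the Aubin--Lions (or H\"older) limit identifies the target. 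This single observation simultaneously delivers well-definedness of $\mathcal{L}$ and the legitimacy of passing to the limit on the left-hand side of your contraction inequality.
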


\begin{proof}
First of all, we prove that for any $\tilde u\in \mathcal{Z}^p(T,M)$,   there exist $u\in \mathcal{Z}^p(T_1,M)$ and a bounded function $v$ such that they are weak solutions to \eqref{fix} and \eqref{4.13}
on $[0,T_1]$.

To this end, we take $\tilde u_\delta\in \mathcal{Z}^p(T,M)$ such that $\tilde u_\delta$ is uniformly H\"{o}lder continuous in space and time, and $ \|\tilde u_\delta-\tilde u\|_{\calX^p(T)}\to 0 $ as $\delta\to 0$.
Then take $v_\delta$ and $u_\delta$ as $v_i$ and $u_i$ from Lemma \ref{global-existence-lm2}, respectively, with $\tilde u_\delta$ in place of $\tilde u_i$.
It follows from \eqref{4.1K} that for some $K_1$ depending only on $\|v_{0}\|_{C^1}$, $\|\tilde u_\delta\|_{\calX^p(T)}$ and $M$ such that 
\[
|\nabla v_\delta(\cdot,\cdot)|\leq K_1\quad \text{ in }\Omega_T.
\]
Since $M>\|u_0\|_\infty$, by  Remark \ref{R.3.1} and Proposition  \ref{P.4.1}, there exist $T_1$ depending on $M$ but independent of $\delta$ and $\eps$ such that
\begin{equation}\label{udelta-bound}
\|u_{\delta}\|_{L^\infty(\Omega_{T_1})}  \le M,
\end{equation}
and,  for some $C$ independent of $\delta,\eps\in (0,1)$, 
\[
\sup_{x_0\in\R^N}\iint_{[0,T_1]\times B_1(x_0)}(\eps+u_\delta)^{2m-2}|\nabla u_\delta|^2dxdt\leq C.
\]

Next, passing $\delta\to 0$ along a subsequence, 
we can find $v\in L^\infty(0,T_1;W^{1,\infty}(\R^N))$ and  $u\in L^\infty(\Omega_{T_1})\cap L^2((0,T_1);W^{1,2}_{\rm loc}(\R^N))$ such that
\beq\lb{999}
\begin{aligned}
&v_\delta \rightharpoonup v, \quad \nabla v_\delta\rightharpoonup \nabla v,
\quad u_\delta \rightharpoonup u\quad\text{ in }L^p_{\rm loc}(\Omega_{T_1})\text{ for all }p\geq 1;
\\
&\qquad(\eps+u_\delta)^{m-1}\nabla u_\delta \rightharpoonup (\eps+u)^{m-1}\nabla u\quad\text{ in }L^2_{\rm loc}(\Omega_{T_1}).    
\end{aligned}
\eeq 
It is clear that $v$ is a weak solution of \eqref{4.13}  on $[0,T_1]$.
We claim   that $u\in \mathcal{Z}^p(T_1,M)$ and $u$ is a weak solution of \eqref{fix}. In fact, 
by  \eqref{udelta-bound} 
\begin{equation}
\label{new-u-bound-eq1}
\|u\|_{L^\infty(\Omega_{T_1})}\le M,
\end{equation}
and
 there exists a uniformly bounded vector field $g$ in $\Omega_{T_1}$ such that
\begin{equation}
u_\delta \nabla v_\delta \rightharpoonup g\quad\text{ in }L^p_{\rm loc}(\Omega_{T_1})\text{ for all }p\geq 1.
\end{equation}
We have from equation \eqref{4.1K} that $\nabla v_\delta $ is uniformly finite independent of $\delta$ (and also $\eps$), Lemma \ref{L.holder} below yields that $u_\delta$ is uniformly H\"{o}lder continuous in $[t_1,t_2]\times\R^N$ with $0<t_1<t_2\leq T_1$, and the H\"{o}lder norm is independent of $\delta,\eps\in (0,1)$. Therefore, (for fixed $\eps$) after passing $\delta\to0$ along a subsequence, we actually obtain that $u_\delta\to u$ pointwise locally uniformly in $(0,T_1]\times\R^N$. 
This and the weak convergence of $\nabla v_\delta\to \nabla v$ yield that 
\begin{equation}
\label{g-eq}
g=u\nabla v.
\end{equation}
By the $u_\delta $-equation (also see the computations of \eqref{lp-eq1}), $u_\delta(t,\cdot)$ is continuous in $t$ in the space of $L_{\rm loc}^p(\R^N)$ (uniformly in $\delta$). By the pointwise convergence of $u_\delta\to u$,  the boundedness of $u_\delta$ (see \eqref{udelta-bound}),  and  the Dominated Convergence Theorem,    we have that  for each $t\in [0,T_1]$, $u_\delta(t,\cdot)\to u(t,\cdot)$ in $L_{\rm loc}^p(\R^N)$,
and
\begin{equation}
\label{new-u-bound-eq4}
u\in \calX^p(T_1).
\end{equation}
By  \eqref{999}-\eqref{new-u-bound-eq4},  $u\in \mathcal{Z}^p(T_1,M)$ and $u$ is a weak solution of \eqref{fix} on $[0,T_1]$.  Without loss of generality, we may assume that $T_1$ satisfies the conclusion in  Lemma \ref{global-existence-lm2}.

\smallskip

In the following, we show that   there exists a mapping $\calL : \mathcal{Z}^p(T_1,M) \to  \mathcal{Z}^p(T_1,M)$ such that $\calL(\tilde u)=u$, where $u$ is a weak solution of \eqref{fix}
on $[0,T_1]$, and it  is a contraction on $\mathcal{Z}^p(T_1,M)$. 
Suppose that along another sequence of $\delta'\to 0$ (or we have a different approximations of $\tilde u_{\delta'}\to \tilde u$), for some $u'\in \mathcal{Z}^p(T_1,M)$ we have $u_{\delta'} \to u'$ in $L^p_{\rm loc}(\Omega_{T_1})$ and $u_{\delta'}(t,\cdot)\to u'(t,\cdot)$ in $L_{\rm loc}^p(\R^N)$ for each $t\in [0,T_1]$. It follows from Lemma \ref{global-existence-lm2} that
\[
\|u_{\delta'}-u_{\delta}\|_{\calX^p(T_1)}\leq \frac12\|\tilde u_{\delta'}-\tilde u_{\delta}\|_{\calX^p(T_1)}.
\]
Passing $\delta$ and $\delta'$ to $0$ yields that $u=u'$.
Hence the weak solution $u$ to \eqref{fix} on $[0,T_1]$ obtained via the above approximation process is unique.  This allows us to define  the mapping 
$\mathcal{L}: \mathcal{Z}^p(T_1,M)\to \mathcal{Z}^p(T_1,M_1)$,  where
$\calL(\tilde u)=u$ and $u$ is the weak solution of \eqref{fix} from the above approximation process.  Moreover,   Lemma \ref{global-existence-lm2} shows that this mapping $\mathcal{L}$  is a contraction on $\mathcal{Z}^p(T_1,M)$.

\smallskip

Finally,  if $\tilde u$  is uniformly H\"{o}lder continuous,  we can take $\tilde u_\delta=\tilde u$.
Hence, it follows from the previous proof that  $v=v_\delta$ is uniformly $C^{1+\frac{\alpha}{2},2+\alpha}$ in $\Omega_T$. The classical parabolic regularity results then yield that $u=u_\delta$ is locally  uniformly $C^{1+\frac{\alpha}{2},2+\alpha}$ in the interior of $\Omega_T$. So, they are classical solutions.
 \end{proof}

\medskip

The following lemma concerns both interior and bottom boundary H\"{o}lder regularity for nonlinear diffusion advection equation with logistic source 
\begin{equation}\lb{linearu}
u_t=m\nabla\cdot( (\eps +u)^{m-1}\nabla u)-\nabla\cdot(u V)+u(a-bu).
\end{equation}
The H\"{o}lder norm  depends only on $\|V\|_\infty$ and other parameters in the equation, and is independent of both higher regularity of $V$ and $\eps$. In the special case $a=b=\eps =0$, the argument for the interior estimate appears in \cite{kim2018regularity}, and the full proof of the general statement is given in \cite{hassan2025regularity}.

\begin{lem}\lb{L.holder}
Given a bounded vector field $V(t,x)$, $\eps>0$ and $\tau>0$, let $u\geq 0$ be a classical solution to \eqref{linearu}
in the domain of $(t_0, t_0+2\tau)\times B_2(x_0)$ for some $t_0\geq 0$ and $x_0\in\bbR^N$. Suppose that $\|u\|_\infty,\, \|V\|_\infty\leq M$ for some $M>0$. Then there exists $C$ depending only on $m,N,a,b,\tau$ and $M$ such that
\[
\|u\|_{C^{\alpha}((t_0+\tau, t_0+2\tau)\times B_1(x_0))}\leq C.
\]
Moreover, if $u(t_0,\cdot)$ is H\"{o}lder continuous in $B_2(x_0)$, $u(\cdot,\cdot)$ is H\"{o}lder continuous in $(t_0, t_0+2\tau)\times B_1(x_0)$ with a bound depending only on the $m,N,M,a,b,\tau$ and the H\"{o}lder norm of $u(t_0,\cdot)$. 
\end{lem}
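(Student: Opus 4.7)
The plan is to adapt the intrinsic-scaling and De~Giorgi framework developed in \cite{kim2018regularity} for the pure porous-medium equation with drift, treating the additional logistic term $u(a-bu)$ and the cutoff $\eps$ as bounded lower-order perturbations. First I would rewrite \eqref{linearu} in the equivalent divergence form
$$u_t=\Delta(\eps+u)^m-\nabla\cdot(uV)+u(a-bu),$$
whose principal part is of porous-medium type. Since $\|u\|_\infty,\|V\|_\infty\leq M$ and $u\geq 0$, both $uV$ and $u(a-bu)$ are bounded pointwise by a constant $C(m,a,b,M)$, so no regularity of $V$ beyond $L^\infty$ will be used.

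For the interior estimate, I would test the equation against $(u-k)_\pm\zeta^2$ for a spatial cutoff $\zeta$ and a truncation level $k$, then absorb the drift contribution $\iint uV\cdot\nabla(u-k)_\pm\,\zeta^2$ into a fraction of the diffusion term $\iint(\eps+u)^{m-1}|\nabla(u-k)_\pm|^2\zeta^2$ via Young's inequality; the logistic source contributes only a bounded $L^\infty$ perturbation. This produces a Caccioppoli-type energy inequality with constants depending only on $m,N,a,b,M$. Combined with an intrinsic rescaling, in which time is stretched by a factor reflecting the local oscillation of $u$ so that the degeneracy is absorbed into the scaling rather than into the constants, a De~Giorgi iteration yields an oscillation decay $\operatorname{osc}_{Q'}u\leq\lambda\operatorname{osc}_{Q}u$ with $\lambda\in(0,1)$ independent of $\eps$. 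Iterating over a dyadic family of intrinsic cylinders delivers the interior H\"older bound on $(t_0+\tau,t_0+2\tau)\times B_1(x_0)$.

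For the boundary statement, with $u(t_0,\cdot)\in C^\alpha(B_2(x_0))$, I would construct explicit sub- and supersolution barriers of the form $w_\pm(t,x)=u(t_0,x_1)\pm K\bigl(|x-x_1|^\alpha+(t-t_0)^\beta\bigr)$ for $x_1\in B_1(x_0)$, with $\beta=\beta(m,M)\in(0,1]$ tuned to match the porous-medium scaling and $K$ proportional to the initial H\"older seminorm of $u(t_0,\cdot)$. Since $\eps>0$ the equation is uniformly parabolic and the classical comparison principle applies, so these barriers propagate the initial modulus into a short initial slab $[t_0,t_0+\delta]\times B_1(x_0)$. Patching this with the interior estimate from the previous step, applied starting from a time slightly after $t_0$, yields the desired H\"older bound on the full cylinder $(t_0,t_0+2\tau)\times B_1(x_0)$.

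The main obstacle is the uniformity of all constants as $\eps\to 0$. The natural energy estimate produces weights $(\eps+u)^{m-1}$ which degenerate where $u$ is small, and a naive iteration would yield constants blowing up in this limit. The resolution, as in \cite{kim2018regularity}, is to calibrate the intrinsic cylinders to the local size of $u$ itself rather than to a fixed ellipticity constant; verifying that this calibration survives the perturbation by $\nabla\cdot(uV)$ and $u(a-bu)$ under only the bound $\|V\|_\infty\leq M$ is the technical heart of the argument, and accounts for why the full detailed execution is deferred to \cite{hassan2025regularity}.
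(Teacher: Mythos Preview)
The paper does not actually prove this lemma: immediately after the statement it says that the interior estimate in the special case $a=b=\eps=0$ is in \cite{kim2018regularity} and that ``the full proof of the general statement is given in \cite{hassan2025regularity}.'' Your proposal to adapt the intrinsic-scaling/De~Giorgi framework of \cite{kim2018regularity}, treating the drift and logistic terms as bounded lower-order perturbations, is exactly the route the paper points to, and your identification of $\eps$-uniformity as the central difficulty matches the authors' remark.

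One caution on the boundary part: your explicit barriers $w_\pm(t,x)=u(t_0,x_1)\pm K(|x-x_1|^\alpha+(t-t_0)^\beta)$ have $\Delta(|x-x_1|^\alpha)\sim |x-x_1|^{\alpha-2}$, and the porous-medium operator applied to $w_+$ picks up a factor $(\eps+w_+)^{m-1}$ that is bounded \emph{below} by $(\eps+u(t_0,x_1))^{m-1}$, not above. So near $x=x_1$ the diffusion term blows up with a coefficient that does not vanish with $\eps$, and it is not clear the time derivative $K\beta(t-t_0)^{\beta-1}$ can dominate it uniformly in $\eps$. A safer route for the up-to-initial-time estimate, and the one typically taken in the degenerate-parabolic literature, is to run the same oscillation-decay argument on cylinders touching $\{t=t_0\}$, using the H\"older modulus of $u(t_0,\cdot)$ to control the initial oscillation in the De~Giorgi iteration, rather than constructing explicit pointwise barriers.
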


\begin{rk}\lb{R.5.1}
In the proof of  \eqref{g-eq}, we invoke Lemma \ref{L.holder} to show that $u_\delta\to u$ as $\delta\to 0$   along a subsequence almost every where (in fact, pointwise) in $(0,T_1]\times\R^N$.   There is an alternative argument of proving this  fact without using Lemma \ref{L.holder}. We refer readers to \cite{jin2017boundedness, sugiyama2007time, tao2012global} for more details.
Indeed, by multiplying the $u_\delta$-equation by $u_\delta^{m}\zeta$ with $\zeta\in C_0^\infty(B_1)$, we get
\begin{align*}
\frac{1}{m+1}\int_{\R^N} (u_\delta^{m+1})_t\zeta  dx= -\int_{\R^N} \nabla(u_\delta^{m}\zeta)\cdot\left[m(u_\delta+\eps)^{m-1}\nabla u_\delta - \chi u_\delta\nabla v\right] + u_\delta^{m}\zeta(au_\delta - bu_\delta^2).
\end{align*}
By Young's inequality and Theorem \ref{apriori-boundedness-thm}, we obtain
\begin{align*}
\left|\int_{B_1} (u_\delta^{m+1})_t\zeta\right|&\leq C\left[1+\int_{B_1}(u_\delta+\eps)^{2m-2}|\nabla u_\delta(t,x)|^2dx+\|u_\delta\|_{2m}^{2m}+\|u_\delta\|_{m+2}^{m+2}\right]\|\zeta\|_{W^{1,\infty}_0(B_1)}\\
&\leq C\|\zeta\|_{W^{1,\infty}_0(B_1)}.    
\end{align*}
This implies that $\partial_t(u_\delta^{m+1})$ restricted to $(0,T_1)\times B_1$ is bounded under the norm of 
\[
L^1((0,T_1); (W^{1,\infty}_0(B_1))^*)=:L^1((0,T_1); X_1).
\]
Since $(\eps+u_\delta)^{m-1}\nabla u_\delta$ is locally uniformly bounded in $L^2(\Omega_{T})$ and $u_\delta$ is uniformly bounded, $u_\delta^{m+1}$   restricted to $(0,T_1)\times B_1$ is in the space of 
\[
L^2((0,T_1); W^{1,2}(B_1))=:L^2((0,T_1); X_0).
\]
Let $q\in (1,\frac{2N}{N-2})$ and $X:= L^q(B_1)$. Then
\[
X_0=W^{1,2}(B_1)\subseteq X\subseteq (L^\infty(B_1))^*\subseteq  (W^{1,\infty}_0(B_1))^*=X_1,
\]
and the embedding of $X\subseteq X_1$ is continuous.
By Rellich–Kondrachov embedding theorem, $X_0$ is compactly embedded in $X$. By Aubin-Lions compactness lemma, we have that, along a subsequence of $\delta\to 0$, {$u_\delta^{m+1}$} converges in the space of $L^2((0,T_1);L^q(B_1))$. This yields that $u_\delta\to u$  as $\delta\to 0$ along a subsequence a.e. in $[0,T_1]\times B_1$, which, after shifting, proves the claim.
\end{rk}

We now prove   Proposition \ref{main-perturbed-thm}.

\begin{proof}[Proof of Proposition \ref{main-perturbed-thm}]
First, 
fix $\eps>0$ and any $M>\|u_0\|_\infty$. Consider the mapping from Lemma \ref{global-existence-lm1}:
$$\tilde u\in \mathcal{Z}^p(T_1,M) \to \calL(\tilde u)=u(\cdot, \cdot)\in \mathcal{Z}^p(T_1,M).
$$
By Lemmas   \ref{global-existence-lm2} and \ref{global-existence-lm1}, if $T_1\le 1$ is picked to be sufficiently small, then the mapping is a contraction. 
By Banach fixed point theorem,
there is a unique $ u\in\mathcal{Z}^p(T_1,M)$ such that
$
\calL(u)= u.
$
By Lemmas \ref{global-existence-lm2} and \ref{global-existence-lm1} again, we obtain a 
weak solution $(u_\eps,v_\eps)$ to \eqref{main-perturbed-eq} in the time interval $[0,T_1]$.

Next,   we claim that $(u_\eps,v_\eps)$ is a classical solution  of    \eqref{main-perturbed-eq} on $(0,T_1]$. In fact,   recall that $u_{0}$ is uniformly H\"{o}lder continuous, and $v_{0}$ is uniformly $C^{2+\alpha}$. Let $v_{\eps}^\delta$ be uniformly smooth (depending on $\delta$), and for some $p>N$,
\[
\nabla v_\eps^{\delta} \to \nabla v_\eps \quad \text{ as $\delta\to 0$ in }L^p_{\rm loc}(\Omega_{T_1}).
\]
Then let $u_{\eps}^\delta$ solve \eqref{fix} with $v_{\eps}^{\delta}$ in place of $v$, in the classical sense. Since $\eps>0$, the classical parabolic theory yields that $u_\eps^\delta$ converges to $u_\eps$ as $\delta\to 0$ in $L^p_{\rm loc}(\Omega_{T_1})$. Since $\nabla v_\eps$ is uniformly bounded, $\nabla v_{\eps}^{\delta}$ can be chosen to be uniformly bounded independently of $\eps$ and $\delta$. Hence, Lemma \ref{L.holder} implies that $u_{\eps}^{\delta}$ is uniformly H\"{o}lder continuous in $\Omega_{T_1}$ with a bound independent of $\eps$ and $\delta$, which in turn implies that $u_\eps$ is also uniformly H\"{o}lder continuous independently of $\eps$. Thus, in view of Definition \ref{D.2}, the claim follows from the second part of Lemma \ref{global-existence-lm1}.


Now, we show that  \eqref{main-perturbed-eq} has a unique classical solution on $(0,T_1]$.
To  this end, 
 suppose that $(u_\eps',v_\eps')$ is another pair of classical solution of   \eqref{main-perturbed-eq} with initial condition $u_0,v_0$. Since $u_\eps$ and $u_\eps'$ are uniformly H\"{o}lder continuous in $[0,T_1]$, by the second part of Lemma \ref{global-existence-lm1}, 
\[
\calL(u_\eps)=u_\eps\quad\text{and}\quad \calL(u_\eps')=u_\eps'.
\]
Therefore, by Lemma \ref{global-existence-lm2}, for any $M>\|u_0\|_\infty$, there exists $0<\tilde T_1\le T_1$ such that $u_\eps,u_\eps'\in \mathcal{Z}^p(\tilde T_1,M)$ and
\[
\|u_\eps-u_\eps'\|_{\calX^p(\tilde T_1)}\leq \frac12\| u_\eps- u_\eps'\|_{\calX^p(\tilde T_1)}.
\]
This implies that $u_\eps=u_\eps'$ in $[0,\tilde T_1]\times\R^N$. Since both $u_\eps$ and $u_\eps'$ are uniformly bounded, by iteration, we actually get that $u_\eps=u_\eps'$ in $[0,T_1]\times\R^N$, which proves the uniqueness.

Finally,  by standard extension arguments,  we can extend the solution
$(u_\eps,v_\eps)$ of \eqref{main-perturbed-eq} on $[0,T_1]$   to a maximal interval $(0,T_{\max})$.  Moreover, if $T_{\max}<\infty$, then
\beq\lb{666}
\limsup_{t\to T_{\max}-}\|u_\eps(t,\cdot)\|_{C^{1+\alpha}}=\infty\quad {\rm or}\quad 
\limsup_{t\to T_{\max}-}\|v_\eps(t,\cdot)\|_{C^{2+\alpha}}=\infty.
\eeq
However,  Theorem \ref{apriori-boundedness-thm} implies
$$
\sup_{t\in[0,T_{\max})}\|u_\eps(t,\cdot)\|_{L^\infty}<\infty,\quad {\rm and}\quad \sup_{t\in [0,T_{\max})}\|\nabla v_\eps(t,\cdot)\|_{L^\infty}<\infty.
$$
By Lemma \ref{L.holder} again,
$u_\eps$ is uniformly H\"older continuous on $[0,T_{\max})$. 
In view of Lemma \ref{global-existence-lm1}, we have 
$$
\lim_{t\to T_{\max}-}\Big[\|u_\eps(t,\cdot)\|_{C^{1+\alpha}}+\|v_\eps(t,\cdot)\|_{C^{2+\alpha}}\Big]<\infty,
$$
which implies that \eqref{666} cannot occur. Consequently, $T_{\max}=\infty$, and this completes the proof of Proposition \ref{main-perturbed-thm}.

\end{proof}



\subsection{Proof of Theorem \ref{main-thm}}
\label{s-main-thm}


\begin{proof}
First,  
let $\eta\in C_c^\infty(\R^N)$ be non-negative with unit total mass. For any $\eps\in (0,1)$, define $\eta_\eps(x)=\eps^{-N}\eta(\eps^{-1}x)$, and let
$
u_{0,\eps}=u_0*\eta_\eps$ and 
$v_{0,\eps}=v_0*\eta_\eps$. Consequently, for any $p\geq 1$, we have
\[
\|v_{0,\eps}-v_0\|_{L^{p}_{\rm loc}},\, \|\nabla v_{0,\eps}-\nabla v_0\|_{L^{p}_{\rm loc}},\, \|u_{0,\eps}-u_0\|_{L^{p}_{\rm loc}}\to 0
\quad \text{ as }\eps\to 0.
\]
Moreover, $u_{0,\eps}\in L^\infty(\R^N)$, $v_{0,\eps}\in W^{1,\infty}$, and
$$
\|u_{0,\eps}\|_\infty\le \|u_0\|_\infty,\quad \|v_{0,\eps}\|_{W^{1,\infty}}\le \|v_0\|_{W^{1,\infty}}.
$$

Let $(u_\eps,v_\eps)$ from Proposition \ref{main-perturbed-thm} with $u_0$ and $v_0$ being replaced by
$u_{0,\eps}$ and $v_{0,\eps}$, respectively.
For any $T>0$, by Theorem \ref{apriori-boundedness-thm} and Proposition \ref{main-perturbed-thm}, there exits  $C$ independent of $\eps$ and $T>0$ such that
\beq\lb{678}
\|u_{\eps}\|_{L^\infty(\Omega_{T})}  \le C,\qquad
\sup_{x_0\in\R^N}\iint_{[0,T]\times B_1(x_0)}|\nabla u_\eps^m(t,x)|^2dxdt\leq C,
\eeq
and
\[
|\nabla v_\eps(\cdot,\cdot)|\leq C\quad \text{ in }\Omega_T.
\]
Also, we know that $v_\eps$ are uniformly bounded.
Therefore, after passing $\eps\to 0$ along a subsequence, 
we can find $v\in L^\infty(0,T;W^{1,\infty}(\R^N))$ and  $u\in L^\infty(\Omega_{T})$ such that
\beq\lb{convergence}
\begin{aligned}
&v_\eps\rightharpoonup v,\quad
\nabla v_\eps\rightharpoonup \nabla v,
\quad u_\eps \rightharpoonup u&&\quad\text{ in }L^p_{\rm loc}(\Omega_{T})\text{ for all }p\geq 1,
\\
&(\eps+u_\eps)^{m-1}\nabla u_\eps \rightharpoonup u^{m-1}\nabla u &&\quad\text{ in }L^2_{\rm loc}(\Omega_{T}).    
\end{aligned}
\eeq
Moreover, similarly as before, there is a uniformly bounded vector field $g$ in $\Omega_{T}$ such that
\[
u_\eps \nabla v_\eps \rightharpoonup g\quad\text{ in }L^p_{\rm loc}(\Omega_{T})\text{ for all }p\geq 1.
\]

Now, we show that $g=u\nabla v$. As $\nabla v_\eps $ is uniformly finite independent of  $\eps$, Lemma \ref{L.holder} yields that $u_\eps$ is uniformly H\"{o}lder continuous in $[T',T]\times\R^N$ with fixed $0<T'< T$, and the H\"{o}lder norm is independent of $\eps\in (0,1)$. This shows that $u_\eps\to u$ pointwise locally uniformly in $(0,T]\times\R^N$, and it follows that $g=u\nabla v$. We comment that this fact can also be obtained by the argument in Remark \ref{R.5.1} without invoking Lemma \ref{L.holder}.
Finally, due to \eqref{678} and \eqref{convergence}, $(u,v)$ is a global weak solution, and they stay uniformly bounded for all time.

\end{proof}

\appendix

\section{Appendix}
\subsection{Proof of Lemma \ref{maximal-regularity-lm}}

The proof closely follows that of \cite[Lemma 2.3]{hassan2024chemotaxis}, relying on the result from \cite{matthias1997heat}.
    
First, let
    $$
    \tilde g(t,x)=
    \begin{cases}
    g(t,x)\quad &{\rm for}\quad t\in (0,T),\,\, x\in\R^N\cr
    0\quad &{\rm for}\quad t>T,\,\, x\in\R^N.
    \end{cases}
    $$
     By \cite[Theorem 3.1]{matthias1997heat},  the initial value problem
    \begin{equation*}
    \begin{cases}
     \tilde v_t =\Delta \tilde v-\tilde v  + \tilde g,\quad x\in \R^N,\,\,  0<t<\infty \cr
    \tilde v(0,x) = 0
    \end{cases}
    \end{equation*}
    has a unique solution
     $\tilde v(\cdot,\cdot)\in W^{1,\gamma}((0,\infty),L^{\gamma}(\R^N))\cap L^{\gamma}((0,\infty), W^{2,\gamma}(\R^N))$.
Next, let
    $\tilde w(t,x):=e^{{ t}}\tilde v(t,x)$ which solves
\begin{equation}\label{pdelaplace-1}
    \begin{cases}
     \tilde w_t =\Delta  \tilde w + e^{{ t} }\tilde g,\quad x\in \R^N,\,\,  0<t<\infty \cr
    \tilde  w(0,x) = 0
    \end{cases}
    \end{equation}
We conclude that \eqref{pdelaplace-1}  has a unique solution in $W^{1,\gamma}((0,\infty),L^{\gamma}(\R^N))$ $\cap$ $L^{\gamma}((0,\infty), W^{2,\gamma}(\R^N))$. By the closed graph theorem, there is ${ C_\gamma}>0$ independent of $T$ such that
    \begin{equation}
    \label{est1}
      \int_0^T \int_{\R^N}e^{{  t}} \Big( |\tilde v(t,x)|^\gamma+|  \nabla \tilde v(t,x)|^\gamma+|\Delta\tilde  v(t,x)|^\gamma\Big)dxdt\le C_\gamma \int_0^T \int_{\R^N} e^{{ t}}| g(t,x)|^\gamma dxdt.
    \end{equation}

    Next,   let  $w:=v-\tilde {v}$, which  satisfies
    \begin{equation}
    \label{pdelaplace-2}
    \begin{cases}
     w_t =\Delta w -  w,\quad t
    \in (0,T),\, x\in \R^N\cr
    w(0,x) = v_0.
    \end{cases}
    \end{equation}
    By classical results of heat equations,
    \[
    w(t,x)= \int_{\R^{N}}e^{-t}G(t, x-y)v_0(y)dy=\int_{\R^N} e^{-t} G(t,y)v_0(x-y)dy,
    \]
    where $G$ is the heat kernel \eqref{heat-kernel}. Since
$    |\nabla G(t,x)|\leq \frac{|x|}{2t} G(t,x)$ and
 $v_0\in W^{1,\gamma}(\R^N)$, there holds
$$
\nabla w(t,x)=\int_{\R^N} e^{-t} G(t,x-y)\nabla v_0(y)dy
$$
and
$$
\Delta w(t,x)=\int_{\R^N} e^{-t} \nabla G(t,x-y)\cdot \nabla v_0(y)dy.
$$
    Then, by Young's convolution inequality,   there is $C_{\gamma,N}>0$ such that  for all $t>0$,
    $$
    \|w(t,\cdot)\|^{\gamma}_{L^\gamma(\R^N)} +\|\nabla w(t,\cdot)\|^{\gamma}_{L^\gamma(\R^N)}+ \|\Delta  w(t,\cdot)\|^{\gamma}_{L^\gamma(\R^N)}
   \leq C_{\gamma,N} e^{-t} 
    \|v_0(\cdot)\|^{\gamma}_{W^{1,\gamma}(\R^N)}.
    $$
    This implies that  
    \begin{align*}
    \int_{0}^T \int_{\R^N}e^{t}\Big( |w |^\gamma +|\nabla w|^\gamma+|\Delta w|^\gamma  \Big)dxdt \le C_{\gamma,N}T \|v_0(\cdot)\|^{\gamma}_{W^{1,\gamma}(\R^N)} .
    \end{align*}
    The above and \eqref{est1} yield the conclusion.

\subsection{Proof of Lemma \ref{v-bound-lm}}

Let $\varphi$ from Lemma \ref{psi-lm} with some $\kappa>0$ to be determined. First,  note that
\begin{align}
\label{v-phi-eq}
(v\varphi)_t=\Delta(v\varphi)-v\varphi +\left(v\varphi-2\nabla v\cdot \nabla\varphi - v\Delta \varphi - uv \varphi\right)
\end{align}
and $v_0\varphi\in W^{1,p}(\R^N)\cap W^{1,\infty}(\R^N)$ for any $p>1$.
Hence for any $p\le q\le\infty$, 
\begin{align}
\label{A-eq}
\| (\nabla  v (t,\cdot))\varphi\|_{L^{q}}
&\le \|v(t,\cdot)\nabla \varphi\|_{L^{q}}+\|\nabla(v\varphi)\|_{L^{q}}\nonumber\\
&\le \underbrace{ \|v(t,\cdot)\nabla \varphi\|_{L^{q}}}_{A_1(q)}+ \underbrace{ \|\nabla e^{(\Delta- I)t} (v_0\varphi)\|_{L^{q}}}_{A_2(q)}\nonumber\\
&\quad +
 \underbrace{\int_0^t \left\|\nabla e^{(\Delta- I)(t-s)}\Big( v\varphi-2\nabla v\cdot\nabla \psi-v\Delta\varphi-\varphi uv\Big)\right\|_{L^{q}}ds}_{A_3(q)}.
\end{align}

Next, we estimate $A_1(q)$, $A_2(q)$, and $A_3(q)$ for  $q=p$ or  $q=\infty$. Below, we write $C_\kappa$ as a constant that might depend on $C,p$ and $\kappa$, while $C$ is independent of $\kappa$.
Since $v$ is uniformly bounded,
\begin{equation}
\label{A1-eq}
A_1(p)\leq C_\kappa\quad {and}\quad A_1(\infty)\leq C\kappa.
\end{equation}
Next, it follows from \eqref{Lp Estimates-2}, \eqref{L-infty- Estimates-1} and $\|v_0\|_{W^{1,\infty}}\leq C$ that
\beq
\begin{aligned}
    \label{A2-eq}
A_2(p)\leq C\|\nabla (v_0\varphi)\|_{L^p}\leq C_\kappa \quad\text{and}\quad
A_2(\infty)\leq  C.
\end{aligned}
\eeq
Applying \eqref{Lp Estimates-3} with $q=p$  to $A_3(p)$ yields
\begin{align}
\label{A3-eq1}
A_3(p)&\leq C\int_0^t e^{-(t-s)}(t-s)^{-\frac{1}{2}}\| v\varphi-2\nabla v\cdot\nabla \varphi-v\Delta\varphi-\varphi uv  \|_{L^p}\nonumber\\
&\leq C_\kappa +C\int_0^ t e^{-(t-s)}(t-s)^{-\frac{1}{2}} \Big(\|u\varphi\|_{L^p}+\|\nabla v\cdot\nabla  \varphi\|_{L^p}\Big)\nonumber\\
&\leq C_\kappa +C\sup_{s\in [0,T]} \|u(s,\cdot)\varphi\|_{L^p}+C\kappa \int_0^ t e^{-(t-s)}(t-s)^{-\frac{1}{2}}\|(\nabla v(s,\cdot))\varphi\|_{L^p} ds.
\end{align}

Now, by \eqref{A-eq}, \eqref{A1-eq}, \eqref{A2-eq}, and \eqref{A3-eq1}, we have
\begin{align*}
\| (\nabla  v (t,\cdot))\varphi\|_{L^{p}}
&\le C_\kappa+C\sup_{s\in [0,T]} \|u(s,\cdot)\varphi\|_{L^p}+C\kappa \sup_{0\le s\le T} \|(\nabla v(s,\cdot))\varphi\|_{L^p}\quad \forall\, 0\le t\le T.
\end{align*}
By taking supremum in $t\in [0,T]$ and taking $\kappa$ to be sufficiently small depending on $C$, we get
\begin{equation}
\label{A-eq1}
\sup_{0\le t\le T}\|(\nabla v(t,\cdot))\varphi\|_{L^p}\le C_\kappa.
\end{equation}
In the rest of the proof, we fix one such $\kappa$ and we might drop it from the notations of $C_\kappa$.

Finally, applying \eqref{Lp Estimates-3} with $q=\infty$ to $A_3(p)$ yields that for $p>N$,
\begin{align*}
A_3(\infty)&\leq C\int_0^t e^{-(t-s)}(t-s)^{-\frac{1}{2}-\frac{N}{2p}}\| v\varphi-2\nabla v\cdot\nabla \varphi-v\Delta\varphi-\varphi uv  \|_{L^p}\nonumber\\
&\leq C_\kappa +C\int_0^ t e^{-(t-s)}(t-s)^{-\frac{1}{2}-\frac{N}{2p}} \Big(\|u\varphi\|_{L^p}+\|\nabla v\cdot\nabla  \varphi\|_{L^p}\Big)\nonumber\\
&\leq C_\kappa +C\sup_{s\in [0,T]} \|u(s,\cdot)\varphi\|_{L^p}+C\kappa \int_0^ t e^{-(t-s)}(t-s)^{-\frac{1}{2}-\frac{N}{2p}}\|(\nabla v(s,\cdot))\varphi\|_{L^p} ds.
\end{align*}
Since $\int_{B(x_0,1)} u^p(t,x)dx\leq C_1$ uniformly for all $t\in [0,T]$ and $x_0\in\R^N$ by the assumption,
this and \eqref{A-eq1} imply that
\begin{equation}
\label{A3-eq2}
A_3(\infty)\le C\quad\text{ for some }C=C(C_1).
\end{equation}
By \eqref{A-eq}, \eqref{A1-eq}, \eqref{A2-eq}, and \eqref{A3-eq2}, there holds
$$
\| (\nabla  v (t,\cdot))\varphi(\cdot)\|_{L^{\infty}}\le C\quad \forall\, t\in [0,T].
$$
Replacing $\varphi(\cdot)$ by $\varphi(\cdot -x_0)$ yields
$
\| (\nabla  v (t,\cdot))\varphi(\cdot-x_0)\|_{L^{\infty}}\le C$ uniformly for all $t\geq 0$ and $x\in\bbR^N$.
This implies that
$$
\sup_{0\le t\le T} \|\nabla v(t,x)\|_{L^\infty}\le C
$$
for $C$ depending only on $p$, $N$,  $ C_1$ and $ \|v_0\|_{W^{1,\infty}}$.
The lemma is thus proved.

\bibliographystyle{siam}

\begin{thebibliography}{10}

\bibitem{amann1993nonhomogeneous}
{\sc H.~Amann}, {\em Nonhomogeneous linear and quasilinear elliptic and parabolic boundary value problems}, Function spaces, differential operators and nonlinear analysis,  (1993), pp.~9--126.

\bibitem{chen2025porous}
{\sc X.~Chen, S.~Li, L.~Wang, and W.~Wang}, {\em Global existence of suitable weak solutions to the {3D chemotaxis‑Navier‑Stokes} equations}, Discrete and Continuous Dynamical Systems – Series A, 45 (2025), pp.~425--479.

\bibitem{chung2017porous-fluid}
{\sc Y.-S. Chung, S.~Hwang, K.~Kang, and J.~Kim}, {\em H\"older continuity of {Keller-Segel} equations of porous medium type coupled to fluid equations}, J. Differential Equations, 263 (2017), pp.~2157--2212.

\bibitem{duan2010global}
{\sc R.~Duan, A.~Lorz, and P.~A. Markowich}, {\em Global solutions to the coupled chemotaxis-fluid equations}, Communications in Partial Differential Equations, 35 (2010), pp.~1635--1673.

\bibitem{gilbarg1998elliptic}
{\sc D.~Gilbarg and N.~S. Trudinger}, {\em Elliptic Partial Differential Equations of Second Order}, Springer, 2~ed., 1998.

\bibitem{griette2023speed}
{\sc Q.~Griette, C.~Henderson, and O.~Turanova}, {\em Speed-up of traveling waves by negative chemotaxis}, Journal of Functional Analysis, 285 (2023), p.~110115.

\bibitem{hassan2024chemotaxis}
{\sc Z.~Hassan, W.~Shen, and Y.~P. Zhang}, {\em Global existence of classical solutions of chemotaxis systems with logistic source and consumption or linear signal production on $\mathbb{R}^n$}, J. Differential Equations, 413 (2024), pp.~497--556.

\bibitem{hassan2025regularity}
\leavevmode\vrule height 2pt depth -1.6pt width 23pt, {\em Chemotaxis models with nonlinear/porous medium diffusion, consumption, and logistic source on $\mathbb{R}^n$: {II. Regularity and uniqueness}}, in preparation,  (2025+).

\bibitem{hassan2025spreading}
\leavevmode\vrule height 2pt depth -1.6pt width 23pt, {\em The spreading of global solutions of chemotaxis systems with logistic source and consumption on $\mathbb{R}^n$}, Calculus of Variations and Partial Differential Equations, 64 (2025), pp.~1--55.

\bibitem{huang2021two_species}
{\sc T.~Huang, Z.~Hou, and Y.~Han}, {\em Global existence and boundedness in a two-species chemotaxis system with nonlinear diffusion}, Open Mathematics, 19 (2021), pp.~949--962.

\bibitem{jin2017boundedness}
{\sc C.~Jin}, {\em Boundedness and global solvability to a chemotaxis model with nonlinear diffusion}, Journal of Differential Equations, 263 (2017), pp.~5759--5772.

\bibitem{jin2018porous-fluid}
{\sc C.~Jin}, {\em Global solvability and boundedness to a coupled chemotaxis-fluid model with arbitrary porous medium diffusion}, J. Differential Equations, 265 (2018), pp.~32--353.

\bibitem{jin2019porous-stability}
\leavevmode\vrule height 2pt depth -1.6pt width 23pt, {\em Large time behavior of solutions to a chemotaxis model with porous medium diffusion}, J. Math. Anal. Appl., 478 (2019), pp.~195--211.

\bibitem{jin2024porous-fluid}
\leavevmode\vrule height 2pt depth -1.6pt width 23pt, {\em Global boundedness and eventual regularity of chemotaxis-fluid model driven by porous medium diffusion}, Commun. Math. Sci., 22 (2024), pp.~1167--1193.

\bibitem{karuppusamy2024porous-fluid}
{\sc Y.~Karuppusamy, S.~Lingeshwaran, and M.~Jeyaraj}, {\em Solvability of an attraction-repulsion chemotaxis {Navier-Stokes} system with arbitrary porous medium diffusion}, Electron. J. Differential Equations,  (2024), p.~28 pp.

\bibitem{kim2018regularity}
{\sc I.~Kim and Y.~P. Zhang}, {\em Regularity properties of degenerate diffusion equations with drifts}, SIAM Journal on Mathematical Analysis, 50 (2018), pp.~4371--4406.

\bibitem{ladyzhenskaya1968linear}
{\sc O.~A. Ladyženskaja, V.~A. Solonnikov, and N.~N. Ural'ceva}, {\em Linear and Quasilinear Equations of Parabolic Type}, vol.~23 of Translations of Mathematical Monographs, American Mathematical Society, 1968.

\bibitem{liu2011coupled}
{\sc J.-G. Liu and A.~Lorz}, {\em A coupled chemotaxis-fluid model: Global existence}, Annales de l’Institut Henri Poincaré C, Analyse Non Linéaire, 28 (2011), pp.~643--652.

\bibitem{marras2018boundedness}
{\sc M.~Marras and G.~Viglialoro}, {\em Boundedness in a fully parabolic chemotaxis-consumption system with nonlinear diffusion and sensitivity, and logistic source}, Mathematische Nachrichten, 291 (2018), pp.~2318--2333.

\bibitem{matthias1997heat}
{\sc H.~Matthias and P.~Jan}, {\em Heat kernels and maximal {$L^p$-$L^q$} estimates for parabolic evolution equations}, Communications in Partial Differential Equations, 22 (1997), pp.~1647--1669.

\bibitem{salako2018existence}
{\sc R.~B. Salako and W.~Shen}, {\em Existence of traveling wave solutions of parabolic--parabolic chemotaxis systems}, Nonlinear Analysis: Real World Applications, 42 (2018), pp.~93--119.

\bibitem{shen2021can}
{\sc R.~B. Salako, W.~Shen, and S.~Xue}, {\em Can chemotaxis speed up or slow down the spatial spreading in parabolic-elliptic {Keller-Segel} systems with logistic source?}, Journal of Mathematical Biology, 79 (2019), pp.~1455--1490.

\bibitem{song2019new}
{\sc X.~Song and J.~Zheng}, {\em A new result for global solvability and boundedness in the {N}-dimensional quasilinear chemotaxis model with logistic source and consumption of chemoattractant}, Journal of Mathematical Analysis and Applications, 475 (2019), pp.~895--917.

\bibitem{sugiyama2006global}
{\sc Y.~Sugiyama}, {\em Global existence in sub-critical cases and finite time blow-up in supercritical cases to degenerate {Keller–Segel} systems modelling chemotaxis}, Differential and Integral Equations, 19 (2006), pp.~841--876.

\bibitem{sugiyama2007time}
\leavevmode\vrule height 2pt depth -1.6pt width 23pt, {\em Time global existence and asymptotic behavior of solutions to degenerate quasi-linear parabolic systems of chemotaxis}, Differential and Integral Equations, 20 (2007), pp.~133--180.

\bibitem{sugiyama2006decay}
{\sc Y.~Sugiyama and H.~Kunii}, {\em Global existence and decay properties for a degenerate {Keller–Segel} model with a power factor in drift term}, Journal of Differential Equations, 227 (2006), pp.~333--364.

\bibitem{szymanska2009nonlocal}
{\sc Z.~Szyma{\'n}ska, C.~M. Rodrigo, M.~Lachowicz, and M.~A. Chaplain}, {\em Mathematical modelling of cancer invasion of tissue: the role and effect of nonlocal interactions}, Mathematical Models and Methods in Applied Sciences, 19 (2009), pp.~257--281.

\bibitem{tao2012global}
{\sc Y.~Tao and M.~Winkler}, {\em Global existence and boundedness in a {Keller-Segel-Stokes} model with arbitrary porous medium diffusion}, Discrete and Continuous Dynamical Systems - Series B, 32 (2012), pp.~1901--1914.

\bibitem{tian2023porous-fluid}
{\sc Y.~Tian and Z.~Xiang}, {\em Global boundedness to a {3D chemotaxis-Stokes} system with porous medium cell diffusion and general sensitivity}, Adv. Nonlinear Anal., 12 (2023), pp.~23--53.

\bibitem{vazquez2007porous}
{\sc J.~L. Vázquez}, {\em The Porous Medium Equation: Mathematical Theory}, Oxford University Press, 2007.

\bibitem{wang2023porous-fluid}
{\sc J.~Wang}, {\em Global bounded solution in a chemotaxis-stokes model with porous medium diffusion and singular sensitivity}, Acta Appl. Math., 187 (2023), p.~22 pp.

\bibitem{wang2015higher}
{\sc L.~Wang, C.~Mu, K.~Lin, and J.~Zhao}, {\em Global existence to a higher-dimensional quasilinear chemotaxis system with consumption of chemoattractant}, Zeitschrift für Angewandte Mathematik und Physik, 66 (2015), pp.~1633--1648.

\bibitem{wang2014boundedness}
{\sc L.~Wang, C.~Mu, and S.~Zhou}, {\em Boundedness in a parabolic-parabolic chemotaxis system with nonlinear diffusion}, Zeitschrift für Angewandte Mathematik und Physik, 65 (2014), pp.~1137--1152.

\bibitem{winkler2010boundedness}
{\sc M.~Winkler}, {\em Boundedness in the higher-dimensional parabolic-parabolic chemotaxis system with logistic source}, Communications in Partial Differential Equations, 35 (2010), pp.~1516--1537.

\bibitem{wu2019boundedness}
{\sc S.~Wu}, {\em Global boundedness in a quasilinear chemotaxis model with nonlinear diffusion and consumption of chemoattractant}, Mathematical Methods in the Applied Sciences, 42 (2019), pp.~2352--2368.

\bibitem{xiang2023porous-fluid}
{\sc Z.~Xiang and J.~Zhou}, {\em A note to the large-time behavior of a {3D chemotaxis-Navier-Stokes} system with porous medium slow diffusion}, Discrete Contin. Dyn. Syst. Ser. B, 28 (2023), pp.~5307--5324.

\bibitem{xu2025porous}
{\sc F.~Xu and B.~Liu}, {\em Existence and uniqueness of global large-data solutions for the {Chemotaxis-Navier-Stokes} system in $\mathbb{R}^2$}, arXiv:2506.15434,  (2025).

\bibitem{ye2022periodic}
{\sc H.~Ye and C.~Jin}, {\em Bounded weak and strong time periodic solutions to a three-dimensional chemotaxis-{Stokes} model with porous medium diffusion}, Journal of Mathematical Physics, 63 (2022), p.~061502.

\bibitem{yu2020porous-fluid}
{\sc P.~Yu}, {\em Global existence and boundedness in a chemotaxis-stokes system with arbitrary porous medium diffusion}, Math. Methods Appl. Sci., 43 (2020), pp.~639--657.

\bibitem{zhang2021boundedness}
{\sc J.~Zhang and X.~Hu}, {\em Boundedness in a two-species chemotaxis-consumption system with nonlinear diffusion and sensitivity}, Applied Analysis, 100 (2021), pp.~3530--3545.

\end{thebibliography}

\end{document}